\newtheorem{Theorem}{Theorem}
\newtheorem{Lemma}[Theorem]{Lemma}
\newtheorem{Remark}[Theorem]{Remark}
 \def \vs { \left ( \begin {array} {c} }
    \def \ve { \end {array} \right )} 
\newtheorem{Definition}[Theorem]{Definition}
\renewcommand{\epsilon}{\varepsilon}
\DeclareMathOperator{\dvg}{div} \DeclareMathOperator{\spt}{spt}
\DeclareMathOperator{\graph}{graph}
    \DeclareMathOperator{\tr}{tr}
  \DeclareMathOperator{\Id}{Id}
    \DeclareMathOperator{\Ric}{Ric}    
\def\dmt{\,d\mu_t}
\def\R{\mathbb{R}}
\def\N{\mathbb{N}}
\def\d{\delta}
\def\a{\alpha}
\def\e{\epsilon}
\def\t{\tau}
\def\r{\rho}
\def\s{\sigma}
\def\l{\lambda}
\def\k{\kappa}
\def\H{\mathcal{H}}
\def\mass{\underline{\underline{M}}}
\def\var{\underline{\underline{\text{v}}}}
\def\wt{\widetilde}
\def\ov{\overline}
\def\res{\hbox{ {\vrule height .25cm}{\leaders\hrule\hskip.2cm}}\hskip5.0\mu}
\def\BV{\text{BV}}
\def\loc{\text{loc}}
\def\out{\text{out}}
\title{Null mean curvature flow and outermost MOTS}
\begin{document}
\author{Theodora Bourni \and Kristen Moore}
\thanks{Part of this work was completed while the second author was financed by the Sonderforschungsbereich \#ME3816/1-1 of the DFG}

\begin{abstract} We study the evolution of hypersurfaces in spacetime initial data sets by their null mean curvature. A theory of weak solutions is developed using the level-set approach. Starting from an arbitrary mean convex, outer untrapped hypersurface $\partial\Omega_0$, we show that there exists a weak solution to the null mean curvature flow, given as a limit of approximate solutions that are defined using the $\e$-regularization method. We show that the approximate solutions blow up on the outermost MOTS and the weak solution converges (as boundaries of finite perimeter sets) to a generalized MOTS.
\end{abstract}
\maketitle
\tableofcontents
\section{Introduction}
\noindent We consider the evolution of hypersurfaces in an initial data set $(M^{n+1},g,K)$ that arises as a spacelike hypersurface $M^{n+1}$ in a Lorentzian spacetime, $(L^{n+2},h)$, with induced metric $g$ and second fundamental form  $K$. Let $\vec{n}$ denote the future directed timelike unit normal vector field of $M\subset L$, and consider a 2-sided closed and bounded hypersurface $\Sigma^{n}\subset M^{n+1}$ with globally defined outer unit normal vector field $\nu$ in $M$. Given a smooth hypersurface immersion $F_{0}:\Sigma\rightarrow M$, the evolution of $\Sigma_{0}:=F_{0}(\Sigma)$ by null mean curvature is the one-parameter family of smooth immersions $F:\Sigma\times[0,T)\rightarrow M$ satisfying

\[\tag{$*$}
\left\lbrace
\begin{aligned}
 \frac{\partial F}{\partial t}(x,t) & = -(H+P)(x,t)\nu(x,t), \quad x\in\Sigma,\quad t\geq0, \\
                         F(x,0) & = F_{0}(x),\quad x\in\Sigma{},
\end{aligned}
\right.
\]
where $H:=\text{div}_{\Sigma_t}(\nu)$ denotes the mean curvature of $\Sigma_t:=F(\Sigma,t)$ in $M$ and $P:=\text{tr}_{\Sigma_t}K$ is the trace of $K$ over the tangent space of $\Sigma_t$. The quantity $H+P$ corresponds to the {\it null expansion} or \textit{null mean curvature} $\theta_{\Sigma_t}^+$ of $\Sigma_t$ with respect to its future directed outward null vector field $l^+:=\nu+\vec{n}$,
\begin{equation*}
\theta^+_{\Sigma_t}:=\langle\vec{H}_{\Sigma_t},l^+\rangle_h=H+P,
\end{equation*}
where $\vec{H}_{\Sigma_t}$, the mean curvature vector of $\Sigma_t$ inside the spacetime $L$, is given by 
\begin{equation*}\vec{H}_{\Sigma_t}:=H\nu-P\vec{n}.
\end{equation*} 
We will also assume that $(H+P)|_{_{\Sigma_0}}>0$ so that the hypersurface $\Sigma_{t}$ contracts under the flow. We will see below that null mean curvature flow arises as the steepest descent flow of ``area plus bulk energy $P$" with respect to the $L^2$-norm on the hypersurface. It is a generalization of mean curvature flow in that the latter corresponds to the special time-symmetric case of $(*)$, where $K\equiv0$. 

The motivation for studying this particular generalization of mean curvature flow follows from the study of black holes in general relativity. Physically, the outward null mean curvature $\theta^+_{\Sigma}$ measures the divergence of the outward directed light rays emanating from $\Sigma$. If $\theta_{\Sigma}^+$ vanishes on all of $\Sigma$, then $\Sigma$ is called a \textit{marginally outer trapped hypersurface}, or MOTS for short. MOTS play the role of apparent horizons or quasi-local black hole boundaries in general relativity, and are particularly useful for numerically modeling the dynamics and evolution of black holes. For a more detailed discussion and further references see \cite{AMS08,  AM09, AM10}.

From a mathematical point of view, MOTS are the Lorentzian analogue of minimal hypersurfaces. However, since MOTS are not stationary solutions of an elliptic variational problem, the direct method of the calculus of variations is not a viable approach to the existence theory. A successful approach to proving existence of MOTS comes from studying the blow-up set of solutions of \textit{Jang's equation}
\begin{equation}\label{Jang}
\left(g^{ij}-\dfrac{\nabla^iw\nabla^jw}{|\nabla w|^2+1}\right)\left(\dfrac{\nabla_i\nabla_jw}{\sqrt{|\nabla w|^2+1}}+K_{ij}\right)=0,
\end{equation}
for the height function $w$ of a hypersurface. This was an essential ingredient in the Schoen--Yau proof of the positive mass theorem  \cite{SY81}. In their analysis, Schoen and Yau showed that the boundary of the blow-up set  of Jang's equation consists of marginally trapped hypersurfaces. Building upon this work, existence of MOTS in compact data sets with two boundary components, such that the inner boundary is (outer) trapped and the outer boundary is (outer) untrapped, was pointed out by Schoen \cite{S04}, with proofs given by Andersson and Metzger  \cite{AM09}, and subsequently by Eichmair \cite{E09} using a different approach. 

Jang's equation also featured in the second author's study of weak solutions to the evolution by inverse null mean curvature flow  \cite{KM13}, where it was proven that the weak solution starting from any outer trapped initial hypersurface $\partial\Omega_0$ will instantly jump to a MOTS in $M\setminus\bar{\Omega}_0$. Similarly, we see below that Jang's equation plays a key role in the existence theory for weak solutions to $(*)$, as well as the ensuing application of locating MOTS in space-time initial data sets. 

The idea of using geometric evolution equations to find apparent horizons dates back to the work of Tod \cite{Tod}, who suggested using mean curvature flow to find MOTS in time symmetric slices where $K=0$ (and MOTS are minimal hypersurfaces). White \cite{W00} showed that if the initial hypersurface encloses a minimal hypersurface, the outermost such minimal hypersurface will be the stable limit of mean curvature flow. In the same paper \cite{Tod}, Tod also proposed using null mean curvature flow  in the non time-symmetric setting. Numerical results by Bernstein, Shoemaker et al. and Pasch \cite{P97} show convergence of the null mean curvature flow to a MOTS. This paper provides a mathematical justification of these numerical results

Analogous to the behavior of solutions to mean curvature flow, in general it is expected that the null mean curvature of solutions of $(*)$ will tend to infinity at some points, and that singularities will develop. This motivates our development of a theory of weak solutions to the classical flow $(*)$ in this paper, which we implement to investigate the limit of a hypersurface moving under null mean curvature flow.
To develop the weak formulation for the classical evolution $(*$), we use the level-set method and assume the evolving hypersurfaces are given by the level sets,
\begin{equation}\label{levelset1}\Sigma_{t}= \partial\{ x\in M \,\, \big| \,\, u(x)>t \}, \end{equation}
of a scalar function $u:M\rightarrow\mathbb{R}$. Then, whenever $u$ is smooth and $\nabla u\neq0$, the hypersurface flow equation $(*)$ is equivalent to the following degenerate elliptic scalar PDE
\[\tag{$**$}
 \mbox{div}_M\left( \frac{\nabla u}{|\nabla u|}\right) -\left(g^{ij}-\dfrac{\nabla^iu\nabla^ju}{|\nabla u|^2}\right)K_{ij}= \frac{-1}{|\nabla u|}.
\]

We employ the method of \textit{elliptic regularization} to solve $(**)$, and study solutions, $u_{\varepsilon}$,  of the following strictly elliptic equation 
\begin{align*}{(*_{\varepsilon})}\,\,\, \text{div}_M\left(\dfrac{\nabla u_{\varepsilon}}{\sqrt{|\nabla u_{\varepsilon}|^2+\varepsilon^2}}\right)-\left(g^{ij}-\dfrac{\nabla^iu_{\varepsilon}\nabla^ju_{\varepsilon}}{|\nabla u_{\varepsilon}|^2+\varepsilon^2}\right)K_{ij}=-\frac{1}{\sqrt{|\nabla u_{\varepsilon}|^2+\varepsilon^2}}.
\end{align*}
A notable feature of elliptic regularization is that the downward translating graph 
 \begin{equation}\label{translating}
\wt\Sigma^{\varepsilon}_t:=\graph\Bigl(\dfrac{u_{\varepsilon}}{\varepsilon}-\frac{t}{\varepsilon}\Bigr)
\end{equation}
solves the classical evolution $(*)$ in the product manifold $(M\times\mathbb{R},\bar{g}:=g\oplus dz^2)$, where we extend the given data $K$ to be parallel in the $z$-direction. Furthermore, this elliptic regularization problem sheds new light on the study of Jang's equation (\ref{Jang}), since the rescaled function $\hat{u}_{\varepsilon}:=\frac {u_{\varepsilon}}{{\varepsilon}}$ solves 
\[\tag{$*_{\hat{\varepsilon}}$}
\mbox{div}\left(\dfrac{\nabla \hat{u}_{\varepsilon}}{\sqrt{|\nabla \hat{u}_{\varepsilon}|^2+1}}\right)-\left(g^{ij}-\dfrac{\nabla^i\hat{u}_{\varepsilon}\nabla^j\hat{u}_{\varepsilon}}{|\hat{u}_{\varepsilon}|^2+1}\right)K_{ij}=-\dfrac{1}{\varepsilon\sqrt{|\nabla \hat{u}_{\varepsilon}|^2+1}},
\]
which can be interpreted as equation (\ref{Jang}) with a gradient regularization term. Analogous to the situation for Jang's equation, the scalar term $g^{ij}K_{ij}$ obstructs the existence of a supremum estimate for a solution of $(*_{\hat \varepsilon})$. In order to overcome this problem, we introduce the capillarity regularization term studied by Schoen and Yau in \cite{SY81}. Subsequently, we find that when taking the limit of this capillarity regularization term, the solution $\hat{u}_{\varepsilon}$ of $(*_{\hat{\varepsilon}})$ blows up to infinity over a MOTS. 


The main results of this work are summarized in the following theorem.
\begin{Theorem}\label{MOTS}
Let $(M^{n+1},g,K)$ be an initial data set for a space-time, and let $\partial\Omega_{\out}=\Sigma_{\out}$ denote the  outermost MOTS in $M$. Let $\Omega$ be a smooth domain in $M$ with $\Omega_\out\subset\Omega$, and whose boundary, $\partial\Omega$, is a mean convex closed and bounded outer trapped embedded hypersurface in $M$. Then for $2\le n\le 6$ the following hold: 
\begin{enumerate}[(i)]
\item Let  $\lambda=\max_i\{|\l_i|, \l_i\text{  eigenvalue of  } K\}$. Then, for any $0<\e\le \min\left\{\frac{1}{(n+1)\lambda}, \frac12\right\}$  there exists a  solution $\hat{u}_{\varepsilon}\in C^\infty(\Omega\setminus\Omega_\out)$ of the equation $(*_{\hat{\varepsilon}})$ that is zero on $\partial\Omega$ and blows up to infinity over $\Sigma_\out$,  that is $\lim_{x\to x_0}\hat u_\e(x)=\infty$ for any $x_0\in \Sigma_\out$.
\item There exists a sequence of $\hat{u}_{\varepsilon_k}$ as in (i) with $\e_k\downarrow 0$ such that $u_{\e_k}\to u$ in $C^0(\Omega_1\cup\partial \Omega)$, where $u\in C^{0,1}(\Omega_1\cup\partial\Omega)$  and $\Omega_1\subset \Omega\setminus \Omega_\out$ is such that $\partial\Omega\subset\partial\Omega_1$ and $\partial^* (\Omega\setminus\Omega_1)$ is a {\it generalized} MOTS (see Definition~\ref{weakMOTS} and Remark~\ref{weakrmk}).
\end{enumerate}
\end{Theorem}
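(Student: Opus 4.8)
The plan is to prove the two parts of Theorem~\ref{MOTS} by implementing the elliptic regularization scheme outlined in the introduction, following the strategy of Schoen--Yau for Jang's equation together with the blow-up analysis familiar from weak inverse mean curvature flow and from \cite{KM13}. For part (i), I would first solve the capillarity-regularized version of $(*_{\hat\varepsilon})$: fix a small parameter (call it $\sigma$) and solve the strictly elliptic Dirichlet problem
\[
\dvg\!\left(\frac{\nabla v}{\sqrt{|\nabla v|^2+1}}\right)-\left(g^{ij}-\frac{\nabla^i v\,\nabla^j v}{|\nabla v|^2+1}\right)K_{ij}=-\frac{1}{\varepsilon\sqrt{|\nabla v|^2+1}}+\sigma v
\]
on $\Omega$ with zero boundary data, using the a priori $C^0$, interior gradient, and boundary gradient estimates. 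The capillarity term $\sigma v$ restores the sup-estimate that $g^{ij}K_{ij}$ destroys; here is where the bound $\varepsilon\le\frac{1}{(n+1)\lambda}$ enters, since it is exactly what is needed to run the maximum-principle argument (the term $\frac{1}{\varepsilon}$ must dominate $|g^{ij}K_{ij}|\le(n+1)\lambda$ so that the right-hand side has a favorable sign). Then I would let $\sigma\downarrow 0$ and extract a subsequence converging (in $C^\infty_{\loc}$ off the blow-up set) to a solution $\hat u_\varepsilon$ of $(*_{\hat\varepsilon})$; the set where $\hat u_\varepsilon\to\infty$ is analyzed via the standard Jang-equation blow-up argument — rescaling the graph of $\hat u_\varepsilon$ near a blow-up point and passing to a limit, which is a complete stable hypersurface with $\theta^+=0$, i.e. a piece of a MOTS. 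Finally one must argue that this blow-up set is precisely $\Sigma_\out$: containment of $\Omega_\out$ in $\Omega$ forces the blow-up to occur on or outside $\Sigma_\out$ by a barrier/comparison argument, and outer-trappedness of $\partial\Omega$ together with the outermost property of $\Sigma_\out$ pins it down exactly there.

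For part (ii), I would return to the unscaled functions $u_{\varepsilon}=\varepsilon\hat u_\varepsilon$, which solve $(*_\varepsilon)$ and vanish on $\partial\Omega$. The key estimate is a uniform (in $\varepsilon$) local Lipschitz bound for $u_\varepsilon$ on compact subsets of $\Omega\setminus\Omega_\out$ away from $\Sigma_\out$, obtained from the translating-graph interpretation \eqref{translating}: the graph of $u_\varepsilon/\varepsilon - t/\varepsilon$ solves the classical null mean curvature flow in $(M\times\R,\bar g)$, so the usual gradient estimates for that flow — combined with the boundary gradient estimate coming from $\partial\Omega$ being mean convex and outer trapped (which provides a barrier) — give a bound on $|\nabla u_\varepsilon|$ independent of $\varepsilon$. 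With this, Arzelà--Ascoli yields a subsequence $u_{\varepsilon_k}\to u$ in $C^0_{\loc}$, with $u\in C^{0,1}_{\loc}$, on the region $\Omega_1$ which is by construction $\{x : u(x)<\infty\}$, i.e. the complement in $\Omega$ of the region over which the $u_{\varepsilon_k}$ blow up. That the reduced boundary $\partial^*(\Omega\setminus\Omega_1)$ is a generalized MOTS then follows by identifying $\Omega\setminus\Omega_1$ as a limit of the subgraphs (finite perimeter sets) and noting that its reduced boundary inherits, in the varifold/BV sense, the equation $\theta^+=0$ from the blow-up limit — this is exactly the content of Definition~\ref{weakMOTS}.

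The main obstacle I anticipate is the blow-up analysis and the identification of the blow-up set with $\Sigma_\out$. Showing that the rescaled graphs converge to a smooth, properly embedded, stable hypersurface requires uniform local area bounds and curvature estimates for (stable) solutions of the Jang-type equation; the stability is what lets one invoke the Schoen--Simon--Yau curvature estimates, and this is precisely why the dimensional restriction $2\le n\le 6$ appears — in higher dimensions the stable minimal-type hypersurface limit could be singular. One must also rule out the blow-up occurring on both the "$+\infty$" and "$-\infty$" sides in an incompatible way, and show the blow-up locus is nonempty (so that $\hat u_\varepsilon$ genuinely blows up, rather than remaining bounded and producing a smooth solution on all of $\Omega$) — nonemptiness should follow from the fact that a global solution would contradict the outer-trappedness of $\partial\Omega$ via an integral/divergence-structure argument. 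A secondary but delicate point is the uniform boundary regularity: producing barriers at $\partial\Omega$ that are uniform in both $\varepsilon$ and $\sigma$, so that the limit $u$ attains the boundary values continuously and the reduced boundary of $\Omega\setminus\Omega_1$ does not touch $\partial\Omega$; this uses mean convexity of $\partial\Omega$ in an essential way.
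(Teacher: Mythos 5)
Your overall scaffolding (capillarity regularization, $\kappa\to 0$, then $\e\to 0$ with uniform gradient and boundary-gradient estimates and Arzel\`a--Ascoli) matches the paper, but there are two genuine gaps at exactly the two places where the theorem has real content. First, for part (i), the identification of the blow-up set with $\Sigma_\out$ is not obtained by the soft argument you describe. The easy direction is the opposite of the one you state: since the boundary of the blow-up set is a MOTS, it automatically lies in $\ov\Omega_\out$ (as $\Omega_\out$ contains every weakly outer trapped region); the hard direction is to force the blow-up \emph{out} to $\Sigma_\out$, and ``outer-(un)trappedness of $\partial\Omega$ together with the outermost property'' gives no mechanism for this --- a priori the $\kappa\to0$ limit could blow up over some MOTS strictly inside $\Omega_\out$. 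To rule this out one needs lower barriers located \emph{inside} $\Omega_\out$, where the original data admits no strictly outer trapped hypersurface at all. The paper (Section~\ref{outermostMOTSsection}) first modifies the data $K$ to $K'$ inside $\Omega_\out$, using the stability of the outermost MOTS as in \cite{AM09}, to produce a strictly outer trapped $\Sigma^-\subset\Omega_\out$ together with a foliation of the region between $\Sigma^-$ and $\Sigma_\out$ by outer trapped leaves; it then runs the short-time smooth null mean curvature flow starting from $\Sigma^-$ and ``bends'' its arrival-time function by an explicit $\psi$ (Lemma~\ref{barrier}) to get a subsolution of $(*_{\hat\e})$ that is $\ge\delta^{-1}$ on $\Sigma^-$ for every $\delta>0$; comparison then gives $\hat u_\e\ge\delta^{-1}$ on $\Sigma^-$, and the maximum principle in the outer trapped foliated region pins the blow-up MOTS to $\Sigma_\out$ (Theorem~\ref{blowup}). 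None of these ideas appear in your proposal, and without them the claim ``the blow-up set is precisely $\Sigma_\out$'' is unsupported.

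Second, for part (ii), your final sentence --- that $\partial^*(\Omega\setminus\Omega_1)$ ``inherits $\theta^+=0$ from the blow-up limit'' --- conflates two different sets: the $\e$-level blow-up set is $\Sigma_\out$ for every admissible $\e$, whereas $\Omega\setminus\Omega_1$ is the region where the limit $u$ of $u_{\e_k}=\e_k\hat u_{\e_k}$ becomes infinite, which may be strictly larger than $\ov\Omega_\out$; the equation satisfied by its reduced boundary does not follow from part (i) in any BV/varifold sense. In the paper this is the content of Section~\ref{properties} and Theorem~\ref{MOTSconv}, and it rests on a quantitative mechanism entirely absent from your proposal: the $L^1$ estimate $\int_{\Omega_0}(\e^2+|\nabla u_\e|^2)^{-1/2}\le C$ (Lemma~\ref{normalintest}), which via the coarea formula yields $\int_0^\infty\int_{\Sigma_t}|H_t+P(\nu)|\,d\mu_t\,dt\le C$; the one-sided minimization of area plus bulk energy $P$ for the translating graphs and its persistence in the limit (Lemmas~\ref{emin}, \ref{eminu}), giving uniform area bounds and unit density; non-fattening and measure convergence of the level sets (Lemmas~\ref{nonfat}, \ref{mainconvergence}); and then a choice of times $t_i\uparrow\infty$ along which $\int|H_{t_i}+P(\nu)|\,d\mu_{t_i}\to0$, combined with Allard compactness, White's compatibility theorem \cite{W08} relating the varifold limit to $\partial^*E_\infty$, and Reshetnyak continuity plus lower semicontinuity to conclude $H_\infty+P(\nu_\infty)=0$ $\H^n$-a.e. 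Two smaller discrepancies: the interior gradient estimate uniform in $\e$ is not the ``usual'' one, since no sup bound on $u_\e$ survives the limit $\kappa\to0$; the paper uses the localized weight $\exp(-\eta u_\e)-\exp(-\eta T)$ (Lemma~\ref{unigradest}). And the dimensional restriction $2\le n\le 6$ enters in the paper through the regularity theory for $C$-minimizing currents ($(C,1)$-minimal sets), since the graphs $N_\kappa$ have uniformly bounded mean curvature, rather than through Schoen--Simon--Yau stability estimates; your stability-based route would require verifying a stability inequality for the graphs of the regularized equation, which you have not addressed.
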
\label{main}
\begin{Remark}\label{thmrmk} We will call a function  $u$ as in (ii) of Theorem~\ref{MOTS}  a weak solution of $(**)$ and its level sets  $(\Sigma_t=\{u=t\})_{t\ge 0}$  a weak solution  of $(*)$  (see Definition~\ref{weak}). Theorem~\ref{MOTS} (ii) then states that there exists a weak solution  of $(*)$ with initial condition $\Sigma_0=\partial\Omega$, $(\Sigma_t)_{t\ge 0}$,  that converges to a generalized MOTS that lies outside the outermost MOTS. Note also that the fact that the outermost MOTS has the form $\Sigma_\infty=\partial\Omega_\infty$, where $\Omega_\infty$ is an open set, is not an assumption---this is always the case with $\Omega_\infty$ being the union of all weakly outer trapped sets in $M$, that is open sets with  weakly outer trapped boundary (i.e. satisfying $\theta^+\le 0$), as is shown in \cite{AM09}.
\end{Remark}

\begin{Remark}
 In case $\partial \Omega_\out=\emptyset$, then Theorem~\ref{MOTS} still holds with the functions $\hat{u}_{\varepsilon}$, as in (i), being defined over all of $\Omega$ (see Theorem \ref{kto0} (iii)).
 \end{Remark}
 \begin{Definition}\label{weakMOTS} Let $E\subset M$ be a finite perimeter set. We will say that the reduced boundary of $E$, $\partial^*E$,  is a generalized MOTS if the following hold 
 \begin{itemize}
 \item $\mu_E=\H^n\res\partial^*E$ carries a generalized mean curvature vector $\vec H$  and
 \item For $\H^n$-a.e. point on $\partial^*E$
 \[\vec H+ P\nu=0\,,\,\,\text{where }P=\nu^i\nu^jK_{ij}\]
 and $\nu$ is the measure theoretic outer pointing unit normal to $\partial^*E$.
  \end{itemize}
 (See  \cite[Definition 16.5 and \S14]{LSgmt} for precise definitions of $\partial^*E$, $\vec H$ and $\nu$).
 \end{Definition}
 \begin{Remark}[on Definition~\ref{weakMOTS}] \label{weakrmk}
 If $\partial^* E$ is a generalized MOTS, then by Allard's regularity theorem \cite{Al} (see also \cite{Alex}) we infer that, away from a set of $\H^n$-measure zero, $\partial^*E$ is a $C^{1,\a}$ hypersurface for any $\a\in (0,1)$. This implies that locally (away from a set of $\H^n$-measure zero) it is the graph of a function that satisfies  equation \eqref{Jang} weakly, and using standard PDE methods we obtain that  $\partial^* E$ is smooth, and thus a MOTS in the classical sense, away from a set of $\H^n$-measure zero.
 
 Furthermore, since the mean curvature is bounded on the reduced boundary, we also have that if  $\partial^* E$ is a generalized MOTS then $\H^n(\partial E\setminus\partial^*E)=0$.
 \end{Remark}
 \begin{Remark}The proof of the main theorem, Theorem~\ref{main}, is given in Theorems \ref{kto0}, \ref{blowup}, \ref{weak existence} and \ref{MOTSconv}. 
 
In Section \ref{properties}, we give various properties for the graphs of the functions $\hat u_\e$, $\hat u$, the most important being a minimizing property (see Lemmas \ref{emin},  \ref{eminu}). Furthermore, in addition to (locally) uniform convergence of the functions $\hat u_\e$ to $\hat u$, we obtain  convergence, in the sense of varifolds, of their graphs (see Theorem~\ref{mainconvii}).  \end{Remark}

\noindent{\bf Remarks on further directions.} 

 We believe that weak solutions $(\Sigma_t)_{t\ge0}$ (as in Remark~\ref{thmrmk}) actually  converge to the outermost MOTS. However, as our proof yields only {\it weak} convergence of the $\Sigma_t$'s as $t\to \infty$, we can only deduce that the limit is a {\it generalized} MOTS. If the generalized limit can be shown to be regular, then, as it lies outside the outermost MOTS,  the two must coincide. 
 We believe that it should be possible to adapt techniques from \cite{W00} to show that the level sets $\Sigma_t$ have a better minimization property (than the one-sided minimization property of Lemma~\ref{eminu}) and thus obtain better regularity for the limit. At the end of Section \ref{properties} we discuss this  in greater detail.


\section{The smooth flow}
\noindent This work focuses on the development of a theory of weak solutions to null mean curvature flow, and in this sense does not provide a classical, PDE analysis of $(*)$, except for the following remarks laid out here.

Direct calculation reveals that the null mean curvature flow $(*)$ can be expressed in terms of the Laplace-Beltrami operator $\Delta_{g(t)}$ with respect to the metric $g(t)$ as follows 
\begin{equation*}
 \frac{\partial F}{\partial t}(x,t) = \Delta_{g(t)}F(x,t)-g(t)^{ij}K_{ij}.\\
\end{equation*}
Null mean curvature flow is therefore a quasi-linear, weakly parabolic system which inherits many properties from and indeed formally resembles the standard heat equation (plus a lower order term). It arises as the steepest descent flow of area plus bulk energy $P$, since
\begin{equation*}
\frac{d}{dt}\left(|\Sigma_t|+\int_{V_t}P dV\right)=-\int_{\Sigma_t} H(H+P) + P(H+P)d\mu=-\int_{\Sigma_t} (H+P)^2 d\mu, 
\end{equation*}
where $V_t$ denotes the volume traced out by the family of hypersurfaces over the time period $[0,t]$. 

The reaction-diffusion system governing the null mean curvature of $\Sigma_t$ is given by
\begin{equation}\label{heH+P}
\begin{split}\frac{\partial}{\partial t}(H+P)=&\Delta(H+P)+(H+P)(|A|^2+\Ric(\nu,\nu))\\
&-(H+P)(\nabla_{\nu}\text{tr}_MK-(\nabla_{\nu}K)(\nu,\nu))-2D_i(H+P)K_{i\nu}.
\end{split}
\end{equation}
If, for example, $\Sigma_0$ is closed, the cubic reaction term on the right-hand side guarantees singularity formation in finite time, analogous to the situation for mean curvature flow. This motivates the development of a weak solution to extend the evolution beyond the classical singular time.

\vskip 0.1 true in
\textbf{Monotonicity Formula}
We do not study the classification of singularities of the evolution by null mean curvature in this paper, however it is interesting to point out that the heat kernel monotonicity formula for mean curvature flow, proven by Huisken in \cite{Hu90}, generalizes to the null mean curvature flow. By the work of Hamilton \cite{Ha1}, it is known that Huisken's monotonicity formula generalizes to mean curvature flow on a manifold. The monotonicity formula we present here is very close to that of  Hamilton's \cite{Ha1}, with the extra complication that one needs to estimate the extra $P$-term (coming from the speed being here $H+P$ instead of $H$). We remark that such an estimate has been carried out also in \cite{W96} for the case of mean curvature flow with additional forces in Euclidean space.

Let $\psi:M\times [0,T)$, for $T>0$ be a positive solution of the backward heat equation on $M\times [0,T)$ \[\frac{\partial\psi}{\partial t}=-\Delta\psi.\]
We prove a monotonicity formula for the integral of the function
\[\phi:=(4\pi (T-t))^\frac12\psi.\]
We have that
\begin{equation*}\frac{d}{dt}\dmt=-\vec H(\vec H+\vec P) d\mu_t=-H(H+P) d\mu_t
\end{equation*}
and 
\[\frac{d\psi}{dt}=\frac{\partial\psi}{\partial t}+\nabla\psi\cdot (\vec H+\vec P)=-\Delta \psi+\nabla^\bot\psi\cdot (\vec H+\vec P),\]
where $\nabla^\bot= \nabla\psi\cdot \nu$, $\vec H=-H\nu$ and $\vec P=-P\nu$. Hence,
\[
\begin{split}\frac{d}{dt} \int_{\Sigma_t}\phi\dmt&= (4\pi (T-t))^\frac12\int_{\Sigma_t}\bigg(-\frac{\psi}{2(T-t)}-\psi \vec H(\vec H+\vec P)\\
&\hspace{5cm}-\Delta \psi +\nabla^\bot\psi\cdot (\vec H+\vec P)\bigg)\dmt.\end{split}
\]
Since
\[\begin{split}\Delta_{\Sigma_t}\psi&=\dvg_{\Sigma_t}(\nabla^{\Sigma_t}\psi)=\dvg_{\Sigma_t}(\nabla\psi)-\dvg_{\Sigma_t}(\nabla^\bot\psi)= \dvg_{\Sigma_t}(\nabla\psi)+\nabla^\bot\psi\cdot \vec H\\
&=\Delta\psi- D^2\psi(\nu, \nu)+ \nabla^\bot\psi\cdot \vec H,
\end{split}\]
we find that
\[\begin{split}\frac{d}{dt}&\int_{\Sigma_t}\phi\dmt\\
&= (4\pi (T-t))^\frac12\int_{\Sigma_t}\biggl(-\frac{\psi}{2(T-t)}-\psi \vec H(\vec H+\vec P)-\Delta_{\Sigma_t} \psi-D^2\psi(\nu,\nu)\\
& \hspace{7.3cm}+2\nabla^\bot\psi\cdot \vec H+\nabla^\bot\psi\cdot\vec P\biggr)\dmt\\
&=(4\pi (T-t))^\frac12\int_{\Sigma_t}\biggl(-\psi\left(\vec H+\vec P-\frac{\nabla^\bot \psi}{\psi}\right)^2+\psi  \vec P\left( \vec H+\vec P-\frac{\nabla^\bot\psi}{\psi}\right)\\
&\hspace{4cm}-\frac{\psi}{2(T-t)}-\Delta_{\Sigma_t} \psi-D^2\psi(\nu,\nu)+\frac{|\nabla^\bot\psi|^2}{\psi} \biggr)\dmt.\end{split}\]
Define now
\[Q(\psi)= \frac{\psi}{2(T-t)}+D^2\psi(\nu,\nu)- \frac{|\nabla^\bot\psi|^2}{\psi}\]
and let $P_0=\sup_M|P|$. Noticing that $\int_{\Sigma_t}\Delta_{\Sigma_t}\psi\dmt=0$ and applying the Cauchy--Schwarz inequality, we obtain
\[\begin{split}
\frac{d}{dt}\int_{\Sigma_t}\phi\dmt &\le  \frac12\int_{\Sigma_t}-\phi\left(\vec H+\vec P-\frac{\nabla^\bot \psi}{\psi}\right)^2\dmt\\
&\quad- (4\pi (T-t))^\frac12\int_{\Sigma_t}Q(\psi)\dmt+\frac{P_0^2}{2}\int_{\Sigma_t}\phi\dmt.
\end{split}\]
Note that $Q(\psi)$ is the quantity that appears in Hamilton's Harnack matrix inequality \cite{Ha}, and in the special case where $\nabla \Ric=0$ and the sectional curvatures of $M$ are non-negative, this implies that $Q(\psi)\ge 0$.  In general, we find that there exist constants $B,C$ depending only on $M$ such that
\[-Q(\psi)\le C\psi\left(1+\log\left(\frac{B}{(T-t)^\frac{n+1}{2} \psi}\right)\right).\]
Using the inequality $x(1+\log(y/x))\le 1+x\log y$ (see \cite{Ha1}) we obtain
$-Q(\psi)\le C(1+\psi\log (B(T-t)^{-\frac{n+1}{2}})$ and thus
\[\begin{split}
\frac{d}{dt}\int_{\Sigma_t}\phi\dmt &\le\frac{P_0^2}{2}\int_{\Sigma_t}\phi\dmt + C\log \left(\frac{B}{(T-t)^{\frac{n+1}{2}}}\right)\int_{\Sigma_t}\phi\dmt\\
&\quad+C(4\pi (T-t))^\frac12|\Sigma_t|.
\end{split}\]
Seting
\[\zeta(t)=(T-t)\left(\frac{P_0^2}{2}+C\frac{n+1}{2}+C\log \left(\frac{B}{(T-t)^{\frac{n+1}{2}}}\right)\right), \]
we observe that 
\[\frac{d\zeta}{dt}=-\frac{P_0^2}{2}-C\log \left(\frac{B}{(T-t)^{\frac{n+1}{2}}}\right)\]
and thus, 
\[\frac{d}{dt}\left(e^{\zeta(t)}\int\phi d\mu_t\right)\le C(4\pi (T-t))^\frac12|\Sigma_t|.\]


\section{Level-set description and elliptic regularization}
In this section we employ the level-set approach, which transforms the hypersurface evolution equation $(*)$ into a degenerate elliptic equation for a scalar level-set function. We then define the elliptic regularized problem that we will use to prove existence of weak solutions in a later section.

\vskip 0.1 true in
\textbf{Level-set formulation.}
Assume that the evolving hypersurfaces are given by the level sets of a scalar function $u:M\to\mathbb{R}$ via 
\begin{equation*}
E_t:=\{x:u(x)>t\},\quad\quad\Sigma_t:=\partial E_t,
\end{equation*}
where $E_0=\Omega$ and $\partial \Omega$ is an outer untrapped closed and bounded mean convex hypersurface, so that $(H+P)|_{\partial \Omega}>0$ and $H_{\partial \Omega}>0$. Then, wherever $u$ is smooth and $\nabla u\neq0$, the (outward) normal vector to $\Sigma_t$ is given by $\nu=-\dfrac{\nabla u}{|\nabla u|}$ and the boundary value problem
\[\tag{$**$} \,\,
\left\lbrace
\begin{aligned}
 \mbox{div}\left( \frac{\nabla u}{|\nabla u|}\right)-\left(g^{ij}-\dfrac{\nabla^iu\nabla^ju}{|\nabla u|^2}\right)K_{ij} &=-\dfrac{1}{ |\nabla u|} , \\
           u\Big|_{\partial \Omega} \,\, &= \,\, 0,
\end{aligned}
\right.
\]
describes the evolution of the level sets of $u$ by null mean curvature. In particular, the left-hand side represents the negative null mean curvature of $\Sigma_t$ and the right-hand side is the speed of the family of level sets in the outward unit normal direction $\nu$.  

\vskip 0.1 true in
\textbf{Elliptic regularization.} As a first step towards establishing existence of weak solutions to the degenerate elliptic problem $(**)$, we study solutions of the following strictly elliptic equation, for $\varepsilon>0$
\[\tag{$*_{\varepsilon}$}
\left\lbrace
\begin{aligned}
 \mbox{div}\left(\dfrac{\nabla u_{\varepsilon}}{\sqrt{|\nabla u_{\varepsilon}|^2+\varepsilon^2}}\right)-\left(g^{ij}-\dfrac{\nabla^iu_{\varepsilon}\nabla^ju_{\varepsilon}}{|\nabla u_{\varepsilon}|^2+\varepsilon^2}\right)K_{ij}&=-\dfrac{1}{\sqrt{|\nabla u_{\varepsilon}|^2+\varepsilon^2}},\\
u_{\varepsilon}\big|_{\partial\Omega}\,\,&=\,\,0.
\end{aligned}
\right.
\]
Then, rescaling $(*_{\varepsilon})$ via $u_{\varepsilon}:=\varepsilon\hat{u}_{\varepsilon}$, we obtain
\[\tag{$*_{\hat{\varepsilon}}$}
\mbox{div}\left(\dfrac{\nabla \hat{u}_{\varepsilon}}{\sqrt{|\nabla \hat{u}_{\varepsilon}|^2+1}}\right)-\left(g^{ij}-\dfrac{\nabla^i\hat{u}_{\varepsilon}\nabla^j\hat{u}_{\varepsilon}}{|\hat{u}_{\varepsilon}|^2+1}\right)K_{ij}=-\dfrac{1}{\varepsilon\sqrt{|\nabla \hat{u}_{\varepsilon}|^2+1}}.
\]
Here we interpret the left-hand side as the negative null mean curvature $-(H+P)$ of the hypersurface $\graph \hat{u}_{\varepsilon}$ in the product manifold 
\begin{equation}\label{product}
(M^{n+1}\times\mathbb{R},\bar{g}),\quad\quad\bar{g}:=g\oplus dz^2,
\end{equation}
with respect to the upward pointing unit normal $\hat{\nu}_{\varepsilon}:=\dfrac{(-\nabla\hat{u}_{\varepsilon},1)}{\sqrt{1+|\nabla\hat{u}_{\varepsilon}|^2}}$ of the graph, where we extend the given data $K$ to be constant in the $z$-direction. We also extend the unit  normal $\hat{\nu}_{\varepsilon}$ so that it is constant in the $z$-direction. On the right-hand side of $(*_{\hat{\varepsilon}})$ we have 
\begin{equation}\label{gradterm}
-\dfrac{1}{\varepsilon\sqrt{|\nabla \hat{u}_{\varepsilon}|^2+1}}=-\frac{1}{\varepsilon}\langle\tau_{n+2},\hat{\nu}_{\varepsilon}\rangle,
\end{equation}
where $\tau_{n+2}$ is the unit vector in the $z$-direction. Thus, $(*_{\varepsilon})$ has the geometric interpretation that the downward translating graph
\begin{equation} \label{graph}
 \wt\Sigma^{\varepsilon}_t:=\graph \left(\hat{u}_{\varepsilon}-\dfrac{t}{\varepsilon}\right),
\end{equation}
solves $(*)$ smoothly in $\Omega\times\mathbb{R}$. This is equivalent to the statement that the function 
\begin{equation*}U_{\varepsilon}(x,z):=u_{\varepsilon}(x)-\varepsilon z,\quad\quad(x,z)\in\Omega\times\mathbb{R},
\end{equation*}
solves $(**)$ in $\Omega\times\mathbb{R}$, since $U_{\varepsilon}$ is the time-of-arrival function for the solution $\wt\Sigma^{\varepsilon}_t$, that is
\begin{equation}\label{ellip}
 \wt\Sigma^{\varepsilon}_t=\{U_{\varepsilon}=t\}.
\end{equation}
We conclude that elliptic regularization allows one to approximate solutions of $(**)$ by smooth, noncompact, translating solutions of $(*)$ one dimension higher. 

\section{Elliptic regularization and Jang's equation}\label{ellregsection}
In fact, $(*_{\hat{\varepsilon}})$ has the further interpretation as Jang's equation \eqref{Jang}
with the gradient regularization term given by (\ref{gradterm}). Equation (\ref{Jang}) was introduced by Jang in  \cite{J78} to generalize Geroch's \cite{G73} approach to proving the positive mass theorem from the time symmetric case to the general case. Jang noted however, that the equation cannot be solved in general, leaving the question of existence and regularity of solutions open. The analytical difficulty is the lack of an a-priori estimate for $\sup_\Omega |u|$ due to the presence of the zero order term $\tr_M(K)$. For this reason, it is necessary to introduce a regularization term to $(\ref{Jang})$ in order to prove existence of solutions. 

In \cite{SY81}, Schoen and Yau introduce a positive capillarity regularization term that provides a direct supremum estimate via the maximum principle, and study existence of solutions to the following regularized Jang's equation
\begin{align}
\label{SY}\left(g^{ij}-\dfrac{\nabla^iu_{\k}\nabla^ju_{\k}}{|\nabla u_{\k}|^2+1}\right)\left(\dfrac{\nabla_i\nabla_ju_{\k}}{\sqrt{|\nabla u_{\k}|^2+1}}+K_{ij}\right)=\k u_{\k}\quad&\text{on }M,\\
\notag u_{\k}\to0\quad\,\,\,\,&\text{as }|x|\to\infty.
\end{align}
It is interesting to compare the following three approaches to regularizing Jang's equation:
\begin{enumerate}[(i)]
\item A capillarity regularization term as in (\ref{SY}) above.
\item The gradient regularization term $\frac{-1}{\varepsilon\sqrt{1+|\nabla \hat{u}_{\varepsilon}|^2}}$ in $(*_{\hat{\varepsilon}})$, the (rescaled) elliptic regularization problem for null mean curvature flow in this work.
\item The gradient regularization term $\varepsilon\sqrt{1+|\nabla \hat{u}_{\varepsilon}|^2}$, which appears in the (rescaled) elliptic regularization problem 	for the evolution by inverse null mean curvature, studied in \cite{KM13}.
\end{enumerate} 

\noindent In particular, the gradient function $\sqrt{1+|\nabla \hat{u}_{\varepsilon}|^2}$ appearing in cases (ii) and (iii) is related to the vertical component of the upward pointing unit normal $\hat{\nu}_{\varepsilon}$ of $\graph\hat{u}_{\varepsilon}$ via 
\begin{equation}
\langle\tau_{n+2}, \hat{\nu}_{\varepsilon}\rangle=\frac{1}{\sqrt{1+|\nabla\hat{u}_{\varepsilon}|^2}}.
\end{equation} 
This means that the  graphs $\Sigma^{\varepsilon}_t:=\graph\left(\hat{u}_{\varepsilon}-\dfrac{t}{\varepsilon}\right)$ of the function $\hat u_\e$ solving the regularized Jang's equations described by cases (ii) and (iii) above have the additional property of being smooth, translating solutions---one dimension higher, in $M^n\times\mathbb{R}$---of the evolution by null mean curvature, and inverse null mean curvature, respectively.

 In this way, (\ref{SY}) can be viewed as a static, elliptic PDE approach to studying solutions to Jang's equation, as opposed to the evolutionary, parabolic PDE approach as given by the elliptic regularized equation for null mean curvature flow in this work, and the evolution by inverse null mean curvature in \cite{KM13}. The advantage of a parabolic approach is that it not only proves existence of MOTS, but also gives a good idea of what they actually look like---in particular by providing a constructive method for the numerical modeling of solutions.

It turns out however that the gradient regularization terms in (ii) and (iii) are not sufficient on their own to overcome the problem associated with the zero order term $\tr_M(K)=g^{ij}K_{ij}$ in Jang's equation. For the evolution by inverse null mean curvature, as in case (iii), the term $\tr_M(K)$ obstructs the existence of a lower barrier at the inner boundary, and it is necessary to restrict to space time initial data sets $(M, g, K)$ such that $\tr_M(K)\geq 0$ in order to prove existence of solutions to the regularized Jang's equation. 
In the case of null mean curvature flow studied here, we introduce the capillarity regularization term of Schoen and Yau in order to obtain the required supremum estimate to solve $(*_{\hat{\varepsilon}})$.

\vskip 0.1 true in

\noindent\begin{large}\textbf{Adding a capilarity regularization term.}\end{large}\\
As discussed above, in order to overcome the difficulties associated with the zero order term $g^{ij}K_{ij}$, we add the capillarity regularization term to $(*_{\hat{\varepsilon}})$ and study solutions $\hat{u}=\hat{u}_{\varepsilon,\kappa,s}$ of the following problem
\[\tag{$*_{\hat{\varepsilon},\kappa,s}$} \,\,
\left\lbrace
\begin{aligned}
 \mbox{div}\left(\dfrac{\nabla \hat{u}}{\sqrt{|\nabla \hat{u}|^2+1}}\right)-s\left(g^{ij}-\dfrac{\nabla^i\hat{u}\nabla^j\hat{u}}{|\nabla \hat{u}|^2+1}\right)K_{ij}&=\dfrac{-s}{\varepsilon\sqrt{|\nabla \hat{u}|^2+1}}+\kappa\hat{u},\\
\hat u\big|_{\partial\Omega}\,\,&=\,\,0
\end{aligned}
\right.
\]
for $\varepsilon>0$, $\kappa>0$, $s\in[0,1]$ and $\Omega$  an open and bounded set in $M$. The parameter $s$ has been added here to simplify the proof of existence using the implicit function theorem in Lemma~\ref{kappaexists} below. Once existence of solutions of $(*_{\hat{\varepsilon},\kappa,s})$ has been established, we may fix $s=1$ and take the limit as $\kappa$ goes to zero to obtain existence of solutions to $(*_{\hat{\varepsilon}})$. In the study of the regularized Jang's equation (\ref{SY}) in \cite{SY81}, the supremum and gradient  estimates blow up when  $\kappa\to0$, and Harnack-type estimates imply that the boundary of the blowup set is a MOTS in $(M,g)$. We will observe below that the same blow-up behaviour arises for solutions of $(*_{\hat{\varepsilon}})$. 

 We now derive the required a-priori estimates for $(*_{\hat{\varepsilon},\kappa,s})$.
\begin{Lemma}[Supremum estimate]\label{supest}
 Let  $\lambda=\max_i\{|\l_i|, \l_i\text{  eigenvalue of  } K\}$. For any $\e\le \frac{1}{(n+1)\lambda}$ solutions $\hat{u}$ of $(*_{\hat{\varepsilon},\kappa,s})$ satisfy the  estimate
\begin{equation*}\label{supbnd}
0\leq \hat{u}\leq \frac{2}{\varepsilon\kappa}.
\end{equation*}

\end{Lemma}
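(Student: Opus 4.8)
The plan is to obtain both bounds via the maximum principle applied to the scalar PDE $(*_{\hat{\varepsilon},\kappa,s})$, treating it as a strictly elliptic equation in $\hat u$ with a reaction term that, for the given range of $\e$, has a favourable sign.

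First I would establish the lower bound $\hat u\ge 0$. Suppose $\hat u$ attains a negative minimum at an interior point $x_0\in\Omega$ (it cannot be attained on $\partial\Omega$ since $\hat u|_{\partial\Omega}=0$). At $x_0$ we have $\nabla\hat u(x_0)=0$ and $\nabla^2\hat u(x_0)\ge 0$, so the divergence term $\mathrm{div}(\nabla\hat u/\sqrt{|\nabla\hat u|^2+1})$, which at a critical point equals $\Delta\hat u$ (the first-order correction vanishes because $\nabla\hat u=0$), is $\ge 0$. The quadratic term $s(g^{ij}-\nabla^i\hat u\nabla^j\hat u/(|\nabla\hat u|^2+1))K_{ij}$ reduces to $s\,\mathrm{tr}_M K$ at $x_0$. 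Hence at $x_0$ the left-hand side is $\ge -s\,\mathrm{tr}_M K$, while the right-hand side is $-s/(\e\sqrt{|\nabla\hat u|^2+1})+\kappa\hat u(x_0) \le -s/\e+\kappa\hat u(x_0)$ (using $\sqrt{|\nabla\hat u|^2+1}\ge 1$ and $s\ge 0$). Combining, $-s\,\mathrm{tr}_M K \le -s/\e + \kappa\hat u(x_0)$, i.e. $\kappa\hat u(x_0)\ge s(1/\e - \mathrm{tr}_M K)$. Now $|\mathrm{tr}_M K|\le (n+1)\lambda$, and the hypothesis $\e\le 1/((n+1)\lambda)$ gives $1/\e\ge (n+1)\lambda \ge \mathrm{tr}_M K$, so the right side is $\ge 0$, forcing $\hat u(x_0)\ge 0$ — a contradiction. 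Therefore $\hat u\ge 0$ everywhere. (One should treat the $s=0$ case separately, where the equation is just $\mathrm{div}(\nabla\hat u/\sqrt{|\nabla\hat u|^2+1})=\kappa\hat u$ and the conclusion $\hat u\equiv 0$ is immediate from the maximum principle.)

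For the upper bound, I would compare $\hat u$ with the constant supersolution $c:=2/(\e\kappa)$. At an interior maximum point $x_1$ of $\hat u$ we again have $\nabla\hat u(x_1)=0$, $\nabla^2\hat u(x_1)\le 0$, so the divergence term is $\le 0$ there; the quadratic term is again $s\,\mathrm{tr}_M K$, bounded by $s(n+1)\lambda\le s/\e$. Plugging into the equation: $\kappa\hat u(x_1) = \mathrm{div}(\cdots) - s\,\mathrm{tr}_M K + s/(\e\sqrt{|\nabla\hat u|^2+1}) \le 0 + s/\e + s/\e = 2s/\e \le 2/\e$. Hence $\hat u(x_1)\le 2/(\e\kappa)=c$, and since the maximum is attained either in the interior or on $\partial\Omega$ (where $\hat u=0\le c$), we conclude $\hat u\le 2/(\e\kappa)$ everywhere.

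The main subtlety — and the only place requiring a little care rather than routine bookkeeping — is the evaluation of the quasilinear divergence operator at a critical point of $\hat u$: one must verify that $\mathrm{div}\bigl(\nabla\hat u/\sqrt{|\nabla\hat u|^2+1}\bigr)$ at a point where $\nabla\hat u=0$ is exactly $\Delta_g\hat u$ (in particular $\ge 0$ at an interior minimum, $\le 0$ at an interior maximum), since the term involving $\nabla|\nabla\hat u|^2$ carries a factor of $\nabla\hat u$ and so drops out. Everything else is a direct application of the strong maximum principle together with the algebraic inequality $|\mathrm{tr}_M K|\le (n+1)\lambda$ and the smallness assumption on $\e$. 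A remark should be added that the argument is insensitive to whether $\Omega$ is connected and uses only $\hat u|_{\partial\Omega}=0$ and boundedness of $\Omega$.
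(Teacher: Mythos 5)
Your argument is correct and is essentially the paper's proof: the paper likewise evaluates the equation at an interior maximum point (where the divergence term reduces to $g^{ij}\hat u_{ij}\le 0$) to get $\kappa\hat u\le (n+1)\lambda+\tfrac1\e\le \tfrac2\e$, and it phrases your interior-minimum step as the observation that zero is a subsolution of $(*_{\hat{\varepsilon},\kappa,s})$ when $\e\le \tfrac{1}{(n+1)\lambda}$. One cosmetic slip: in your lower bound, the cited fact $\sqrt{|\nabla\hat u|^2+1}\ge 1$ would give the reverse of the stated inequality in general, but since $\nabla\hat u(x_0)=0$ at the minimum the right-hand side equals $-\tfrac{s}{\e}+\kappa\hat u(x_0)$ exactly, which is all your argument needs.
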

\begin{proof}
Since $\hat u|_{\partial\Omega}=0$, either $\hat{u}\leq0$ or $\hat{u}$ has an interior maximum. At an interior maximum point we have 
\begin{align*}
\max_{\Omega}\kappa \hat{u}= g^{ij}\hat{u}_{ij}-sg^{ij}K_{ij}+\frac{s}{\varepsilon}\leq (n+1)\lambda+\frac{1}{\varepsilon}.
\end{align*}
Since, for $\varepsilon\leq\frac{1}{(n+1)\lambda}$, zero is a subsolution of $(*_{\hat{\varepsilon},\kappa,s})$  we find
\begin{equation*}
0\leq \hat{u}\leq \frac{(n+1)\lambda}{\kappa}+\frac{1}{\varepsilon\kappa}\le \frac{2}{\varepsilon\kappa}.
\end{equation*}
\end{proof}
\begin{Lemma}[Gradient estimate]\label{interiorforkappa}
 For any $\e\le \frac12$ solutions $\hat{u}$ of  $(*_{\hat{\varepsilon},\kappa,s})$  satisfy the estimate
\begin{equation*}
\sup_{\Omega}|\nabla \hat{u}|\leq \exp(\eta\sup_{\Omega} \hat{u})\cdot\sup_{\partial\Omega}\left(\frac{1}{\varepsilon}+\sqrt{1+|\nabla \hat{u}|^2}\right),
\end{equation*}
where $\eta$ is a constant that depends only on the initial data, in fact $\eta=\eta(n,\Ric,\|K\|_{C^1})$.
\end{Lemma}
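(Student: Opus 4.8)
\emph{Proof proposal.}

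The plan is to establish this by a Bernstein-type maximum principle on the graph of $\hat u$, using the geometric content of $(*_{\hat\varepsilon,\kappa,s})$. Set $v:=\sqrt{1+|\nabla\hat u|^2}$, let $\hat\nu:=v^{-1}(-\nabla\hat u,1)$ be the upward unit normal of $\Sigma:=\graph\hat u\subset(M\times\mathbb{R},\bar g)$, and $\omega:=\langle\hat\nu,\tau_{n+2}\rangle=1/v$. Then $(*_{\hat\varepsilon,\kappa,s})$ says exactly that $\Sigma$ has scalar mean curvature
\[
\bar H_\Sigma=\dvg\!\Big(\frac{\nabla\hat u}{v}\Big)=sP_\Sigma-\frac{s}{\varepsilon v}+\kappa\hat u,\qquad P_\Sigma:=\tr_\Sigma\bar K,
\]
$\bar K$ being the $z$-invariant extension of $K$. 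Since $\tau_{n+2}$ is a Killing field of $\bar g$ and the data are $z$-invariant, the graphs of $\hat u+t$ all satisfy the same equation; differentiating this trivial one-parameter family and using the linearisation of mean curvature gives a Jacobi/Simons-type identity for $\omega$ on $\Sigma$,
\[
\Delta_\Sigma\omega-\frac{s}{\varepsilon}\big\langle\tau_{n+2}^{\top},\nabla_\Sigma\omega\big\rangle=-\big(|A|^2+\overline{\Ric}(\hat\nu,\hat\nu)\big)\omega-s\big\langle\tau_{n+2}^{\top},\nabla_\Sigma P_\Sigma\big\rangle-\kappa(1-\omega^2),
\]
where $\tau_{n+2}^{\top}=\nabla_\Sigma z$ (so $|\tau_{n+2}^{\top}|^2=1-\omega^2$); dividing by $\omega$ gives the corresponding equation for $\log v=-\log\omega$, and one also records $\Delta_\Sigma z-\frac{s}{\varepsilon}\langle\tau_{n+2}^{\top},\nabla_\Sigma z\rangle=\bar H_\Sigma\,\omega-\frac{s}{\varepsilon}(1-\omega^2)$.

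Next I would apply the maximum principle to $\psi:=\log v-\eta\,\hat u$, for a constant $\eta=\eta(n,\Ric,\|K\|_{C^1})$ to be fixed. By Lemma~\ref{supest}, $\hat u\ge0$ with $\hat u|_{\partial\Omega}=0$, so $\psi=\log v$ on $\partial\Omega$ and only interior maxima need to be controlled. Suppose the maximum of $\psi$ over $\bar\Omega$ is attained at an interior point $p$, where we may assume $v(p)$ is as large as we wish. At $p$ we have $\nabla_\Sigma\psi=0$ and $\Delta_\Sigma\psi\le0$; substituting the identities above gives there
\[
0\ \ge\ |A|^2+\eta^2(1-\omega^2)-\eta\,\bar H_\Sigma\,\omega+\frac{\eta s}{\varepsilon}(1-\omega^2)+\overline{\Ric}(\hat\nu,\hat\nu)+\frac{s}{\omega}\big\langle\tau_{n+2}^{\top},\nabla_\Sigma P_\Sigma\big\rangle+\frac{\kappa(1-\omega^2)}{\omega}.
\]
The decisive point is the term $\frac{s}{\omega}\langle\tau_{n+2}^{\top},\nabla_\Sigma P_\Sigma\rangle$: although $\nabla_\Sigma P_\Sigma$ a priori involves the full second fundamental form $A$, the vector $\tau_{n+2}^{\top}$ is proportional to $\nabla\hat u$, and at $p$ the relation $\nabla_\Sigma\psi=0$ forces $\nabla_\Sigma\log v=\eta\nabla_\Sigma\hat u$, i.e. $\hat u^i\nabla_i\nabla_k\hat u=\nabla_k\!\big(\tfrac12|\nabla\hat u|^2\big)=\eta v^2\hat u_k$ there; thus the Hessian of $\hat u$ contracted against $\nabla\hat u$ is first order, and a direct computation gives $\big|\frac{s}{\omega}\langle\tau_{n+2}^{\top},\nabla_\Sigma P_\Sigma\rangle\big|\le C(n)\|K\|_{C^1}+C\eta\|K\|_{C^0}/v$ at $p$ — no derivatives are lost.

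The remainder is bookkeeping. Discarding the nonnegative terms $|A|^2,\ \frac{\eta s}{\varepsilon}(1-\omega^2),\ \frac{\kappa(1-\omega^2)}{\omega}$, using $\overline{\Ric}(\hat\nu,\hat\nu)=\Ric_M(\hat\nu_M,\hat\nu_M)\ge-\sup|\Ric|$, $1-\omega^2\ge\tfrac12$ (as $v(p)$ is large), $s\le1$, and the supremum bound $\hat u\le2/(\varepsilon\kappa)$ (whence $|\bar H_\Sigma|\le C(n)\|K\|_{C^0}+3/\varepsilon$), one arrives at
\[
0\ \ge\ \tfrac12\eta^2-\sup|\Ric|-C(n)\|K\|_{C^1}-\eta\big(C(n)\|K\|_{C^0}+3/\varepsilon\big)/v(p).
\]
Choosing $\eta$ large in terms of $n,\Ric,\|K\|_{C^1}$ (so that $\tfrac14\eta^2>\sup|\Ric|+C(n)\|K\|_{C^1}$ and $\eta\ge12$) forces $v(p)\le4\big(C(n)\|K\|_{C^0}+3/\varepsilon\big)/\eta=:v_0$; i.e. an interior maximum can occur only where $v\le v_0$. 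Hence $\sup_{\bar\Omega}(\log v-\eta\hat u)\le\log\max\{v_0,\sup_{\partial\Omega}v\}$, and exponentiating (using $\hat u\ge0$) gives $v\le e^{\eta\sup\hat u}\max\{v_0,\sup_{\partial\Omega}v\}$, which is the asserted bound, the $1/\varepsilon$ in the boundary term absorbing $v_0$ (equivalently one runs the whole argument with $\log(v+\tfrac1\varepsilon)$ in place of $\log v$, which produces that boundary term automatically and is where $\varepsilon\le\tfrac12$ is used). I expect the main obstacle to be the identity for $\omega$ and, above all, the control of $\langle\tau_{n+2}^{\top},\nabla_\Sigma P_\Sigma\rangle$ — a term of the order of the principal part — via the cancellation at the critical point, while keeping every surviving constant free of $\varepsilon$ and $\kappa$; this is precisely why the drift term $-\frac{s}{\varepsilon}\langle\tau_{n+2}^{\top},\nabla_\Sigma\cdot\rangle$, harmless at an interior critical point, is retained rather than estimated.
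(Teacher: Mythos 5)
Your proposal is correct and follows essentially the same route as the paper's proof: a maximum-principle (Bernstein-type) argument for $e^{-\eta\hat u}v$ (equivalently $\log v-\eta\hat u$) on $\graph\hat u$, based on the Jacobi-type equation for the gradient function, substitution of $(*_{\hat{\varepsilon},\kappa,s})$ for $\nabla_\Sigma\bar H$, the supremum bound $\kappa\hat u\le 2/\varepsilon$, and a choice of $\eta=\eta(n,\Ric,\|K\|_{C^1})$ large. The only deviations are in details of one and the same scheme: the paper bounds the $\nabla_\Sigma P$ term by $c(1+|A|)$ and absorbs the $|A|$ by Cauchy--Schwarz into the $|A|^2$ term of the Jacobi identity, whereas you use the critical-point relation $\nabla v=\eta v\nabla\hat u$ and the structure of $P(\nu)$ (a cancellation that does close, since the contraction $\tfrac{\partial P}{\partial p_k}\hat u^k$ is of order $v^{-2}$), and the paper builds the factor $\varepsilon$ into its contradiction hypothesis so that $v>1/\varepsilon$ at the interior maximum, while you instead extract the threshold $v_0\sim C/(\eta\varepsilon)$ and absorb it into the $1/\varepsilon$ of the boundary term.
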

\begin{proof} For the gradient function $v(x,f(x)):=\sqrt{1+|\nabla f(x)|^2}$ of a hypersurface  $N=\graph f$ we have
\begin{equation}\label{jacobi}
\Delta^N v=\frac{2}{v}|\nabla^N v|^2-v^2\bar g(\nabla^N H,\tau)+v|A|^2+v\left(1-\frac{1}{v^2}\right)\Ric(\gamma,\gamma),
\end{equation}
where $\tau=\frac{\partial}{\partial z}$ is the unit vector pointing in the upward, $\mathbb{R}$, direction of $M\times\mathbb{R}$, $\nu$ is the upward pointing unit normal to $N=\graph f$, $H$ and $A$ are the mean curvature and the second fundamental form of $N$,  $\gamma:=\frac{\text{pr}_{TM}\nu}{|\text{pr}_{TM}\nu|}$ in case $\nu\ne \tau$ and zero otherwise,  and $\Ric=\Ric_M$ is the Ricci curvature of $M$. For details of the derivation of \eqref{jacobi} see \cite[(13)]{S08}. Recall also that $\bar g$ is the metric in the product manifold $M\times\R$ as defined in \eqref{product}.  We follow the general approach in \cite[Lemma 3.2]{S08} to show that we can obtain a gradient bound given an a-priori height bound and compute $\Delta^N(wv)$, where
 $w(x,z):=\exp(-\eta z)$, for $(x,z)\in M\times\R$ and $\eta>0$ a constant to be chosen later. For the function $w$ we have
\[\nabla^N w=-\eta w\left(\tau-\frac1v\nu\right)\,\text{ and }\,\Delta^N w=\eta^2\left(1-\frac{1}{v^2}\right)w+\eta\frac{H}{v}w,\]
and combining these with \eqref{jacobi}, we obtain
\begin{equation}\label{jacobi2}
\begin{split}
\Delta^N(wv)=\frac{2}{v}\bar g(\nabla v,\nabla (wv))+wv\Biggl(&|A|^2+\left(1-\frac{1}{v^2}\right)\Ric(\gamma,\gamma)\\
&+\eta^2\left(1-\frac{1}{v^2}\right)+\eta\frac{H}{v}-v\bar g(\nabla^N H,\tau)\Biggr).
\end{split}
\end{equation}
In order to obtain a contradiction, define $C_1:=\sup_{\partial\Omega}\varepsilon\sqrt{1+|\nabla \hat{u}|^2}$ and assume 
\begin{equation}\label{contradiction1}
\sup_{\Omega}(\exp(-\eta \hat{u})\varepsilon\sqrt{1+|\nabla \hat{u}|^2})>\max\{C_1,1\},
\end{equation}
which must be attained at an interior point $x_0$. Letting $N=\graph\hat{u}$, equation $(*_{\hat{\varepsilon},\kappa, s})$ implies that 
\begin{equation}\label{floweqn}
H+sP=\dfrac{s}{\varepsilon v}-\kappa\hat{u},
\end{equation}
where $H+P$ is the null mean curvature of $N$. 
Now, using the expression for $\nabla^N w$,  we find 
\begin{align*}
wv^2\bar g(\nabla^N H,\tau)=&-\frac{s}{\varepsilon}\bar g(\nabla^N (wv),\tau)+\frac{sv}{\varepsilon}\bar g(\nabla^N w,\tau)-wv^2s\bar g(\nabla^N P,\tau)\\
&-wv^2\kappa \bar g(\nabla^N \hat u,\tau)\\
=&-\frac{s}{\varepsilon}\bar g(\nabla^N (wv),\tau)-\frac{s}{\varepsilon}\eta wv\left(1-\tfrac{1}{v^2}\right)-wv^2s\bar g(\nabla^N P,\tau)\\
&-wv^2\kappa \bar g(\nabla^N  \hat u,\tau).
\end{align*}
Note that $\nabla^NP=\ov\nabla P-\bar g(\ov\nabla P,\nu)\nu$ (where $\ov\nabla=\nabla^{M\times\R}$),  $K$ (as well as $\nu$)  is extended trivially in the $\tau$ direction so that $\bar g(\ov\nabla P,\tau)=0$, and $\bar g(\tau,\nu)=\frac{1}{v}$. Using these, we obtain
\begin{equation}\label{MNest}
\begin{split}
vs\bar g(\nabla^N P,\tau)=-s\bar g(\ov\nabla P,\nu)&\geq-c(1+|A|),\\
v\kappa \bar g(\nabla^N  \hat{u},\tau)=-\kappa \bar g(\ov\nabla \hat{u},\nu)&=\frac{\kappa|\nabla \hat{u}|^2}{v}\geq0,
\end{split}
\end{equation}
where $c=c(n, \|K\|_{C^1})\ge1$, so that, using the Cauchy--Schwarz inequality, we have
\begin{equation}\label{nablaHest}
\begin{split}
wv^2\bar g(\nabla^N H,\tau)\leq-\frac{s}{\varepsilon}\bar g(\nabla^N (wv),\tau)&-\frac{s}{\varepsilon}\eta wv\left(1-\frac{1}{v^2}\right)+2c^2wv+ wv\frac{|A|^2}{2}.
\end{split}
\end{equation}
At a maximum point $x_0$, where $\Delta^N(wv)\leq0$ and $\nabla^N(wv)=0$, (\ref{jacobi2}) becomes
\begin{align*}
0\geq&|A|^2 +\left(1-\frac{1}{v^2}\right)\Ric(\gamma,\gamma)+\eta^2\left(1-\frac{1}{v^2}\right)+\eta\frac{H}{v}-v\bar g(\nabla^N H,\tau),
\end{align*}
where the constant $c=c(n, \|K\|_{C^1})$ is the constant from \eqref{MNest}. Using (\ref{floweqn}),  \eqref{nablaHest} and Lemma \ref{supest}, we obtain
\[
\begin{split}
0\geq \frac{|A|^2}{2}&+\left(1-\frac{1}{v^2}\right)\left(\Ric(\gamma,\gamma)
+\frac{s\eta}{\varepsilon}+\eta^2\right)\\
&+\frac{s\eta}{\varepsilon v^2}-\eta\frac{\|K\|_{C^0}}{v}-\frac{2\eta}{\e v}-2c^2.
\end{split}
\]
By the assumption (\ref{contradiction1}), we find that $v(x_0)>\frac{1}{\varepsilon}$ and thus $(1-\frac{1}{v^2})>\frac{1}{2}$ when $\varepsilon\leq\frac{1}{2}$. Then, the above becomes
\begin{equation*}
0\geq \frac{\eta^2}{2}+\frac{1}{2}\Ric(\gamma,\gamma)-2c^2-\eta\left(\frac12\|K\|_{C^0}+2\right),
\end{equation*}
where the constant $c=c(n, \|K\|_{C^1})$ is the constant from \eqref{MNest}, and setting $\eta=\eta(n,\Ric,\|K\|_{C^1})$ large enough so that the right-hand side is strictly positive leads to a contradiction and thus  hypothesis (\ref{contradiction1}) is false.
\end{proof}

\begin{Lemma}[Boundary gradient estimate]\label{boundaryforkappa}
Assume that  $\partial\Omega$ is smooth,  strictly mean convex and outer untrapped with respect to the outward pointing unit normal. 
Then, 
solutions $\hat{u}$ of $(*_{\hat{\varepsilon},\kappa,s})$ satisfy the estimate
\begin{equation*}
\sup_{\partial\Omega}|\nabla \hat{u}|\leq C(\|K\|_{C_0},\varepsilon,\theta^+_{\partial\Omega}),\end{equation*}
where we recall that $\theta^+_{\partial\Omega}$ is the null mean curvature of ${\partial\Omega}$ with respect to its future directed outward pointing null vector field.
\end{Lemma}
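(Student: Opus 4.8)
The plan is to construct explicit barriers near $\partial\Omega$ and apply the comparison principle together with the interior gradient bound of Lemma~\ref{interiorforkappa}. Since $\partial\Omega$ is smooth, compact, strictly mean convex and outer untrapped, one has $\theta^+_{\partial\Omega}=(H+P)|_{\partial\Omega}>0$ and $H_{\partial\Omega}>0$ uniformly; let $d(x):=\dist(x,\partial\Omega)$, which is smooth on a one-sided collar $\Omega_\delta:=\{0<d<\delta\}$ for $\delta$ small. The standard ansatz is a barrier of the form $\phi(x)=\psi(d(x))$ with $\psi(0)=0$, $\psi'>0$ large, $\psi''<0$ very negative, so that $\graph\phi$ is steeply sloped near $\partial\Omega$. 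First I would record the formula for the left-hand side of $(*_{\hat\varepsilon,\kappa,s})$ applied to such a $\psi(d)$: writing $\hat\nu$ for the upward unit normal to $\graph\psi(d)$, the operator becomes $\frac{\psi''}{(1+\psi'^2)^{3/2}}+\frac{\psi'}{(1+\psi'^2)^{1/2}}\Delta d - s\bigl(g^{ij}-\tfrac{\psi'^2 d^i d^j}{1+\psi'^2}\bigr)K_{ij}$, plus we must compare with the right side $\tfrac{-s}{\varepsilon\sqrt{1+\psi'^2}}+\kappa\psi$. The point is that for $\psi'$ large the dominant terms are $\psi''/(1+\psi'^2)^{3/2}$ and the ``trace of $K$ along the level sets of $d$'' term, which at $d=0$ is exactly $-(H_{\partial\Omega}+P_{\partial\Omega})=-\theta^+_{\partial\Omega}$ up to the ambient-normal correction; choosing $\psi''$ sufficiently negative (relative to $\theta^+_{\partial\Omega}$, $\|K\|_{C^0}$, $\varepsilon$, and $\sup\Delta d$ on the collar) makes $\phi$ a supersolution on $\Omega_\delta$, while $\phi(0)=0=\hat u|_{\partial\Omega}$ and $\phi\ge \sup_\Omega\hat u\ge\hat u$ on $\{d=\delta\}$ provided $\psi(\delta)$ is taken large, which is compatible since $\psi'$ large forces $\psi(\delta)$ large. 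Here one uses the a priori sup bound from Lemma~\ref{supest} to know how large $\psi(\delta)$ must be, noting the bound $2/(\varepsilon\kappa)$ is $\kappa$-dependent—so strictly the barrier height, and hence the resulting boundary gradient bound, is allowed to depend on $\varepsilon$ (and, through $\sup_\Omega\hat u$, a priori on $\kappa$); this is harmless because in the next step $\kappa\hat u$ will be absorbed, and the final statement only claims dependence on $\|K\|_{C^0},\varepsilon,\theta^+_{\partial\Omega}$.

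The comparison principle for the quasilinear elliptic operator $\mathcal{Q}[w]:=\dvg\bigl(\nabla w/\sqrt{1+|\nabla w|^2}\bigr)-s(g^{ij}-\tfrac{\nabla^iw\nabla^jw}{1+|\nabla w|^2})K_{ij}+\tfrac{s}{\varepsilon\sqrt{1+|\nabla w|^2}}-\kappa w$ then yields $\hat u\le\phi$ on $\Omega_\delta$; the lower bound $\hat u\ge 0\ge -\phi$ near $\partial\Omega$ is likewise obtained (using $\hat u\ge 0$ from Lemma~\ref{supest} and the outward direction, or a symmetric barrier $-\phi$), and since $\phi$ and $\hat u$ agree on $\partial\Omega$ their normal derivatives satisfy $|\partial_\nu\hat u|\le|\psi'(0)|$, giving $\sup_{\partial\Omega}|\nabla\hat u|\le\psi'(0)=C(\|K\|_{C^0},\varepsilon,\theta^+_{\partial\Omega})$ once one checks that the final $\psi'(0)$ produced by the construction depends only on these quantities and the fixed geometry of $\partial\Omega$. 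The one subtlety to handle carefully is the $\kappa\psi$ term in the supersolution inequality: since $\psi\ge0$ this term has the wrong sign for a supersolution, but it is bounded by $\kappa\,\psi(\delta)\le \kappa\cdot(\text{const}/\kappa)$—wait, more precisely we only need $\mathcal{Q}[\phi]\le 0$, and the $-\kappa\phi$ contribution is $\le0$, so it in fact helps; the genuine issue is instead that when $\psi$ is large the term $-\kappa\psi$ can be very negative, which is fine for a supersolution. So the signs work out.

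The main obstacle I expect is organizing the barrier computation so that the final constant $\psi'(0)$ is manifestly independent of $\kappa$ and of $\Omega$ away from its boundary geometry. Concretely, one wants the collar width $\delta$ and the profile $\psi$ to be chosen using only $\theta^+_{\partial\Omega}$ (bounded below), $\|K\|_{C^0}$, $\varepsilon$, and bounds on $|\Delta d|$ and the geometry of $\partial\Omega$—but the requirement $\phi|_{\{d=\delta\}}\ge\sup_\Omega\hat u$ a priori injects the $\kappa$-dependent sup bound $2/(\varepsilon\kappa)$. The resolution is the standard trick: choose the profile $\psi(r)=\tfrac{1}{\alpha}\log(1+\beta r)$ (or $\psi(r)=A\log(1+Br)$ type), which has $\psi'(0)=A B$ controlled but $\psi(\delta)$ only logarithmically large, and then iterate/rescale, or—cleaner—first prove the estimate with $\kappa$-dependent constant, and separately observe (as in \cite{SY81}) that combined with the gradient estimate Lemma~\ref{interiorforkappa} and a Harnack/compactness argument one may take $\kappa\to0$; but for the clean statement here, the right move is to exploit that $\sup_\Omega\hat u$ does \emph{not} actually enter if we use a barrier defined on the full collar with $\psi(\delta)=+\infty$-type blow-up, i.e.\ let $\psi$ blow up before reaching $d=\delta$, so that $\phi$ automatically dominates $\hat u$ there regardless of its (finite) size. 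With that choice the construction closes with $\psi'(0)$ depending only on $\|K\|_{C^0}$, $\varepsilon$, $\theta^+_{\partial\Omega}$ and the fixed boundary geometry, which we suppress, as claimed.
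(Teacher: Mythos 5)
Your overall plan---a Serrin-type barrier $\phi=\psi(d)$ on a collar of $\partial\Omega$, the comparison principle (legitimate here since the zeroth order term $-\kappa w$ is decreasing in $w$), and $|\nabla\hat u|=|\partial_\nu\hat u|\le\psi'(0)$ on $\partial\Omega$---is the same in spirit as the paper's proof, which rewrites $(*_{\hat\varepsilon,\kappa,s})$ in the level-set form (14.43) of \cite{GT01} and invokes \cite[Theorem 14.6]{GT01}, the boundary curvature condition being exactly $H+sP>0$. But there is a genuine gap at the central step: you never verify the supersolution inequality for the barrier you finally commit to, and the mechanism you state, ``choose $\psi''$ sufficiently negative,'' cannot deliver it. If you impose $\frac{\psi''}{(1+\psi'^2)^{3/2}}\le -B$ with $B$ dominating the bad terms (in particular $\frac{s}{\varepsilon\sqrt{1+\psi'^2}}$, so $B\gtrsim\varepsilon^{-1}$ where $\psi'$ is not large), then $\frac{d}{dr}\bigl(\psi'/\sqrt{1+\psi'^2}\bigr)\le-B$ and the profile lies below a circular arc of radius $1/B$: its total height is at most $1/B$ no matter how large $\psi'(0)$ is, so it cannot dominate $\sup_\Omega\hat u\le 2/(\varepsilon\kappa)$ on $\{d=\delta\}$ for small $\kappa$ (your claim that ``$\psi'$ large forces $\psi(\delta)$ large'' is false under your own curvature constraint). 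The weaker concavity $\frac{\psi''}{1+\psi'^2}\le-\varepsilon^{-1}$ (the log barrier of \cite{GT01}) does reach height $M=\sup\hat u$, but only with $\psi'(0)\sim e^{cM}$, i.e.\ a $\kappa$-dependent constant. And the blow-up profile you invoke to avoid $M$ altogether is necessarily convex near its pole, so it is incompatible with either concavity condition. As written, the two halves of your argument (supersolution via $\psi''\ll0$; $\kappa$-independence via blow-up at the inner edge) exclude each other, and the claimed constant $C(\|K\|_{C^0},\varepsilon,\theta^+_{\partial\Omega})$ is not established.

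The missing idea is to let the strict boundary geometry, not $\psi''$, carry the inequality: since $\partial\Omega$ is strictly mean convex and outer untrapped, $H+sP=(1-s)H+s(H+P)\ge c_0>0$ on the parallel surfaces $\{d=r\}$, $r\le\delta$ small, uniformly in $s\in[0,1]$; once $\psi'\ge k_0\sim(\varepsilon c_0)^{-1}$, the terms $\frac{s}{\varepsilon\sqrt{1+\psi'^2}}$ and the $O\bigl((1+\psi'^2)^{-1}\bigr)$ corrections to $\Delta d$ and to the trace of $K$ are all below $c_0/4$, so the supersolution inequality requires only $\frac{\psi''}{(1+\psi'^2)^{3/2}}\le c_0/4$, which permits convex, blowing-up profiles such as $\psi(r)=k_0r+\frac{Ar}{\delta-r}$ (then $\frac{\psi''}{(1+\psi'^2)^{3/2}}\le 2\delta/A^2$), giving $\psi'(0)=k_0+A/\delta$ depending only on $\varepsilon$, $\|K\|_{C^0}$, $\theta^+_{\partial\Omega}$, $H_{\partial\Omega}$ and the collar geometry. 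This is precisely the ``bending'' mechanism the paper itself uses later (Lemmas \ref{barrier} and \ref{bdrygradest}); your instinct that the quoted \cite[Theorem 14.6]{GT01} barrier injects the $\kappa$-dependent sup bound of Lemma~\ref{supest} is legitimate, and a bound uniform in $\kappa$ is exactly what is needed when $\kappa_i\to0$ in Theorem~\ref{kto0}, so repairing your argument this way is worthwhile. Two small further points: Lemma~\ref{interiorforkappa} plays no role in the boundary estimate, and the $O(1)$ geometric term is the sum of $\frac{\psi'}{\sqrt{1+\psi'^2}}\Delta d$ and the trace-of-$K$ term, not the latter alone.
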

\begin{proof}
Since $\partial\Omega$ is strictly mean convex and outer untrapped with respect to the outward pointing unit normal (so that on $\partial \Omega$ $H+sP>0$ for $s\in[0,1]$) we can use the classical barrier construction of Serrin, as presented in   \cite[Theorem 14.6]{GT01}, to obtain the desired boundary gradient estimate. Since equation $(*_{\hat{\varepsilon},\kappa,s})$ is expressed in terms of the geometry of $\graph\hat{u}$, in order to utilize the outer untrapped condition of the boundary $\partial\Omega$, we re-write it instead in terms of the geometry of the individual level sets of $\hat{u}$. To this end, we multiply $(*_{\hat{\varepsilon},\kappa,s})$ by $v^3=\sqrt{1+|\nabla\hat{u}|^2}^3$ to obtain
\begin{align*}
Q(\hat{u}):=&
(1+|\nabla\hat{u}|^2)\left(g^{ij}-\dfrac{\nabla^i\hat{u}\nabla^j\hat{u}}{|\nabla \hat{u}|^2+1}\right)\nabla_{ij}\hat{u}\\
&-s\sqrt{1+|\nabla\hat{u}|^2}^3\left(g^{ij}-\dfrac{\nabla^i\hat{u}\nabla^j\hat{u}}{|\nabla \hat{u}|^2+1}\right)K_{ij}\\
&+\frac{s}{\varepsilon}(1+|\nabla \hat{u}|^2)-\kappa\hat{u}\sqrt{1+|\nabla \hat{u}|^2}^3,
\end{align*}
and decompose it, as in  \cite[(14.43)]{GT01}, into the following coefficients 
\begin{align}\label{levelset}
Q\hat u&=(\Lambda a^{ij}_{\infty}+a^{ij}_0)\nabla_{ij}\hat{u}+|\nabla\hat{u}|\Lambda b_{\infty}+b_0=0,
\end{align} 
where 
\[a^{ij}_{\infty}(x,z,p)=a^{ij}_{\infty}\left(x,\frac{p}{|p|}\right)=\left(g^{ij}-\frac{p^ip^j}{|p|^2}\right),\,\,a^{ij}_0=\frac{p^ip^j}{|p|^2},\,\,\Lambda=1+|p|^2,\]
\[b_{\infty}(x,z,p)=b_{\infty}\left(x,z,\frac{p}{|p|}\right)=-sK_{ij}\left(g^{ij}-\frac{p^ip^j}{|p|^2}\right)-\kappa z\]
and
\[b_0=-\kappa z\frac{\Lambda}{\Lambda^{1/2}+|p|}-sK_{ij}\left(g^{ij}\frac{\Lambda}{\Lambda^{1/2}+|p|}+\frac{p^ip^j}{|p|}\frac{\Lambda^{1/2}}{\Lambda^{1/2}+|p|}\right)+\frac{s}{\varepsilon}\Lambda.\]
Then
\begin{equation*}a^{ij}_{\infty}\left(x,\frac{\nabla\hat{u}}{|\nabla\hat{u}|}\right)\frac{\nabla_{ij}\hat{u}}{|\nabla\hat{u}|}+b^{ij}_{\infty}\left(x,u,\frac{\nabla\hat{u}}{|\nabla\hat{u}|}\right)=-(H+sP)-\kappa \hat{u}, 
\end{equation*}
where here $H+P$ is the null mean curvature of the {\it level sets} of $\hat u$ with respect to the outward pointing unit normal. We see that $b_{\infty}$ is non-increasing in $z$, and also that the correction terms $a_0$ and $b_0$, that arise when considering the curvature of the level sets instead of the graph, are of the order required by the structure condition (14.50) (see also (14.59)) of  \cite[Theorem 14.6]{GT01}. That is, $a_0^{ij}=o(\Lambda)$ and $b_0=o(|p|\Lambda)$ as $|p|\to\infty$. Furthermore, since $\partial \Omega$ is outer untrapped, we see that the boundary curvature condition (14.51) of  \cite[Theorem 14.6]{GT01} is also satisfied since $H-b_{\infty}=H+sP> 0$ at all points on the boundary $\partial\Omega$.  \cite[Theorem 14.6]{GT01} can then be applied, which implies the existence of an upper barrier at any boundary point, and which  depends on the mean curvature of the boundary, $K$ and the supremum bound of of $\hat{u}$ (given in Lemma~\ref{supest}). This finishes the proof of the lemma.
\end{proof}

\begin{Lemma}[Existence for $(*_{\hat{\varepsilon},\kappa,s})$]\label{kappaexists}
Let $(M^{n+1},g,K)$ be an initial data set, $\lambda=\max_i\{|\l_i|, \l_i\text{  eigenvalue of  } K\}$ and $\partial\Omega$  a smooth, strictly mean convex and  outer untrapped hypersurface in $M$. Then, for any $\e\le \min\left\{\frac{1}{(n+1)\lambda},\frac12\right\}$, $\k>0$, $s\in[0,1]$ and $\a\in(0,1)$ there exists a  solution $\hat{u}\in C^{2,\a}(\ov{\Omega})$ of $(*_{\hat{\varepsilon},\kappa,s})$.
\end{Lemma}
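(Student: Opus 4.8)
The plan is to establish existence of a $C^{2,\alpha}(\overline\Omega)$ solution of $(*_{\hat\varepsilon,\kappa,s})$ by a continuity argument in the parameter $s\in[0,1]$, using the a priori estimates of Lemmas~\ref{supest}, \ref{interiorforkappa} and \ref{boundaryforkappa} together with the implicit function theorem (or the method of continuity / Leray--Schauder) in Hölder spaces. First I would recast the problem as finding zeros of a map $\mathcal{F}:C^{2,\alpha}(\overline\Omega)\times[0,1]\to C^{0,\alpha}(\overline\Omega)\times C^{2,\alpha}(\partial\Omega)$, where $\mathcal{F}(\hat u,s)$ is given by the left-hand side of $(*_{\hat\varepsilon,\kappa,s})$ minus its right-hand side (together with the boundary trace $\hat u|_{\partial\Omega}$); note that $\hat u\equiv 0$ is not a solution for $s=0$ because of the constant $-s/\varepsilon$ term, so one should check the $s=0$ problem $\operatorname{div}(\nabla\hat u/\sqrt{1+|\nabla\hat u|^2})=\kappa\hat u$ with zero boundary data, which has the solution $\hat u\equiv 0$. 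Let $S\subset[0,1]$ be the set of $s$ for which a solution exists.

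Next I would show $S$ is open: at any solution $(\hat u_s,s)$ the linearized operator $D_{\hat u}\mathcal{F}(\hat u_s,s)$ is a linear, strictly elliptic operator (the principal part is uniformly elliptic because of the $+1$ in $\sqrt{1+|\nabla\hat u|^2}$) with a strictly positive zeroth-order coefficient coming from the $\kappa\hat u$ term — this is precisely why the capillarity term was introduced and why the parameter $s$ was inserted rather than working directly. Hence the linearization is invertible between the relevant Hölder spaces by standard Schauder theory, and the implicit function theorem gives a neighborhood of $s$ in $S$. Then I would show $S$ is closed: given $s_k\to s_*$ with solutions $\hat u_{s_k}$, Lemma~\ref{supest} gives a uniform $C^0$ bound $0\le\hat u\le 2/(\varepsilon\kappa)$, Lemma~\ref{interiorforkappa} together with the boundary bound Lemma~\ref{boundaryforkappa} gives a uniform $C^1$ bound, and then differentiating the equation and invoking De Giorgi--Nash--Moser / Schauder estimates for the now-uniformly-elliptic quasilinear equation yields uniform $C^{2,\alpha}$ bounds; by Arzelà--Ascoli a subsequence converges in $C^{2,\alpha'}$ for $\alpha'<\alpha$ to a solution at $s_*$. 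Since $S$ is nonempty (contains $0$), open, and closed in the connected interval $[0,1]$, we get $1\in S$, and elliptic bootstrapping upgrades the solution to $C^\infty$ in the interior and $C^{2,\alpha}(\overline\Omega)$ up to the boundary.

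The main obstacle, and the place where the hypotheses on $\varepsilon$ and on $\partial\Omega$ are essential, is the a priori estimates feeding the closedness of $S$: the supremum bound requires $\varepsilon\le 1/((n+1)\lambda)$ so that $0$ is a subsolution (Lemma~\ref{supest}), the interior gradient bound requires $\varepsilon\le 1/2$ and the height bound (Lemma~\ref{interiorforkappa}), and the boundary gradient bound requires the strict mean convexity and outer-untrapped condition on $\partial\Omega$ (Lemma~\ref{boundaryforkappa}). Once these $s$-independent estimates are in hand, the continuity method is routine; I would cite \cite{GT01} for the Schauder theory and the method of continuity. The only subtlety worth spelling out is verifying the hypotheses of the implicit function theorem, i.e. that $\mathcal{F}$ is $C^1$ as a map of Banach spaces and that its partial derivative in $\hat u$ is a bounded invertible linear operator — invertibility is where the sign of the $\kappa\hat u$ term does the work, guaranteeing the homogeneous linearized Dirichlet problem has only the trivial solution by the maximum principle.
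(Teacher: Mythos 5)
Your proposal is correct and follows essentially the same route as the paper: the method of continuity in $s$ starting from the trivial solution at $s=0$, openness via linearizing and applying the inverse/implicit function theorem (with the $-\kappa$ zeroth-order sign giving invertibility), and closedness via the a priori estimates of Lemmas~\ref{supest}, \ref{interiorforkappa}, \ref{boundaryforkappa} together with Schauder theory and Arzel\`a--Ascoli, exactly as in the paper's adaptation of \cite[Lemmas 2 and 3]{SY81}. (Your parenthetical remark that $\hat u\equiv 0$ ``is not a solution for $s=0$'' is a slip --- the $-s/\varepsilon$ term is multiplied by $s$ and vanishes there --- but you immediately note correctly that the $s=0$ problem is solved by $\hat u\equiv 0$, which is all that is needed.)
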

\begin{proof} The proof follows that of \cite[Lemma 3]{SY81}.
The idea is to apply the method of continuity to the equation $(*_{\hat{\varepsilon},\kappa,s})$. To this end, fix $\varepsilon\le \min\left\{\frac{1}{(n+1)\lambda},\frac12\right\}$ and $\k>0$,  and define
\begin{align*}
F^s(w):=\dvg\left(\frac{\nabla w}{\sqrt{|\nabla w|^2+1}}\right)&-s\left(g^{ij}-\dfrac{\nabla^iw\nabla^jw}{|\nabla w|^2+1}\right)K_{ij}\\
&+\dfrac{s}{\varepsilon \sqrt{|\nabla w|^2+1}}-\kappa w.
\end{align*}
For any $\a\in (0,1)$,  the map 
\begin{equation*}
F: C^{2,\alpha}_0(\bar{\Omega})\times[0,1]\to C^{\alpha}(\bar{\Omega}) 
\end{equation*}
given by $F(w,s):=F^s(w)$  has the solution $F(0,0)=0$. Let $I$ be the set of $s$ such that $(*_{\varepsilon,\kappa,s})$ has a solution in $C^{2,\alpha}(\bar{\Omega})$ or equivalently the set of $s$ for which there exists $w\in C^{2,\alpha}_0(\bar{\Omega})$ such that $F(w,s)=0$. Then $0\in I$ and we will show that $I$ is an open and closed subset of $[0,1]$, which implies that $I=[0,1]$, thus proving the lemma. To show that $I$ is closed one uses  the a-priori estimates in Lemmas \ref{supest}, \ref{interiorforkappa} and \ref{kappaexists}, standard PDE estimates (which imply `higher' a-priori estimates for a solution; in particular $C^{2,\a}$ for any $\a\in (0,1)$) and the Arzela-Ascoli theorem. To show that $I$ is open, one has to linearize $F^s$ at a solution $f_0$ and apply the inverse function theorem for Banach spaces. For the details of these two claims we refer the reader to \cite[Lemmas 2 and 3]{SY81}   where the arguments on the fact that $I$ is both open and closed are carried out in detail. We remark that the only difference between our case and \cite[Lemma 3]{SY81} is that one has to add the factor $-\frac{\nabla^i f_0}{\e(|\nabla f_0|^2+1)^\frac32}$
in the term $B^i$ that appears in the linearization of $F^s$ at a solution $f_0$ (the notation being here as in \cite[Lemma 3]{SY81}).
\end{proof}
\section{Existence of solutions to $(*_{\varepsilon})$}\label{eexistencesection}
We now consider a fixed $\e\le \min\left\{\frac{1}{(n+1)\lambda},\frac12\right\}$, where, as usual, $\lambda=\max_i\{|\l_i|, \l_i\text{  eigenvalue of  } K\}$, set $s=1$ in $(*_{\hat{\varepsilon},\kappa,s})$ and analyze the limit as $\kappa\to0$ of the graphs $N_{\kappa}=\graph\hat{u}_{\varepsilon,\kappa}$, where $\hat{u}_{\varepsilon,\kappa}$ is a solution of the regularized Jang's equation $(*_{\hat{\varepsilon},\kappa,1})$ (which we denote from now on by (${*_{\hat{\varepsilon},\kappa}}$)), so that
\begin{equation} \tag{${*_{\hat{\varepsilon},\kappa}}$}
\begin{aligned}
 \mbox{div}\left(\dfrac{\nabla \hat{u}_{\varepsilon,\kappa}}{\sqrt{|\nabla \hat{u}_{\varepsilon,\kappa}|^2+1}}\right)=&\left(g^{ij}-\dfrac{\nabla^i\hat{u}_{\varepsilon,\kappa}\nabla^j\hat{u}_{\varepsilon,\kappa}}{|\nabla \hat{u}_{\e, \k}|^2+1}\right)K_{ij}\\
 &-\dfrac{1}{\varepsilon\sqrt{|\nabla \hat{u}_{\varepsilon,\kappa}|^2+1}}+\kappa\hat{u}_{\varepsilon,\kappa}.\end{aligned}
\end{equation}
This equation, along with the bound for $|\kappa \hat u_{\e,\k}|$ provided by Lemma~\ref{supest}, shows that the mean curvature of $N_k$ is uniformly bounded by a constant $C= C(\e)$ independent of $\k$. 

In the language of {\it currents} or of {\it finite perimeter sets} (codimension 1), the bound on the mean curvature implies that $N_k$ is a {\it $C$-minimizing current} (see \cite{DS93}) or a {\it $(C, 1)$-minimal set} (see \cite{MaMi}), i.e.  that
\[\mass(N_k)\le \mass(N_k+\partial Q)+C\mass(Q), \forall (n+1)\text{-current }Q.\]
 Such currents or finite perimeter sets  have been extensively studied  in  \cite{DS93} and \cite{MaMi}, where, among other things, it is shown that they have  compactness and regularity properties similar to those of area minimizing currents. The results in  \cite{DS93, MaMi} are stated for currents (or sets) in Euclidean space, but the codimension 1 results (the case which is of interest to us here) extend to general Riemannian ambient manifolds,  see \cite{Alex}. Applying these results in our case yields the following. For a sequence $\k_i\to 0$ the  sequence  $\{N_{\k_i}\}_{i\in \N}$ has a subsequence which converges (in the sense of currents but also as Radon measures) to a {\it$C$-minimizing current} $N$. Furthermore, in dimensions $n\le 6$, $N$ (and any  {\it $C$-minimizing current}) has no singular set, i.e. it is a $C^{1}$ manifold. We can now prove that the graphs of the sequence  $\{N_{\k_i}\}$ have locally uniformly bounded $C^{1,\a}$ norm and thus the convergence $N_{\k_i}\to N$ is actually a $C^{1,\a}$ convergence, for any $\a\in (0,1)$. This is the result of a standard application of Allard's regularity theorem \cite{Al} on rescalings of $N_{\k_i}$ (see  \cite{DS93, MaMi, Alex}). The uniform $C^{1,\a}$ estimates and standard PDE theory (since the mean curvature of $N_{\k_i}$ is expressed in terms of $|\nabla u_{\hat \e,\k_i}|$, see \cite{GT01}), imply now that we have locally uniform $C^{\infty}$ estimates for the graphs $N_{\k_i}$ and, as a consequence, the convergence $N_{\k_i}\to N$ is smooth.

We now claim that, as a consequence of the Hopf maximum principle,   the components of the limit $N$ are  embedded graphs. To see this, we rework the Jacobi equation (\ref{jacobi}) to express it instead in terms of the vertical component $\frac{1}{v}$ of the upper unit normal vector $\nu$ to $N_{\kappa}$, which yields
\begin{equation*}
\Delta^{N_k}\left(\frac{1}{v}\right)+\frac{1}{v}\left(|A|^2+\left(1-\frac{1}{v^2}\right)\Ric(\gamma,\gamma)-v\bar g\left(\nabla ^{N_k}H,\tau\right)\right)=0.
\end{equation*}
Then, using the equation  $({*_{\hat{\varepsilon},\kappa})}$ to write $H=P-\frac{1}{\e v}+\k\hat u_{\e, \k}$, where $P=(g^{ij}-\nu^i\nu^j)K_{ij}$, along with the estimate
\begin{equation*}
|A|^2+\left(1-\frac{1}{v^2}\right)\Ric(\gamma,\gamma)-v\bar g\left(\nabla^{N_k}(\kappa \hat{u}_{\e,\k}+P),\tau\right)\geq \frac{\kappa |\nabla^{N_k}\hat{u}_{\e,\k}|^2}{v}-\beta\geq-\beta,
\end{equation*}
for some constant $\beta\geq0$ depending on the size of the Ricci tensor and $\|K\|_{C^1}$ (see \eqref{MNest}), we see that the vertical component of the graph satisfies
\begin{equation}\label{Harnack}
\Delta^{N_k}\left(\frac{1}{v}\right)+\frac{1}{\varepsilon}\bar g\left(\nabla^{N_k} \left(\frac{1}{v}\right),\tau\right)\leq\frac{\beta}{v}.
\end{equation}
The fact that the supremum and gradient estimates for $\hat{u}_{\varepsilon,\kappa}$ (Lemmas \ref{supest} and \ref{interiorforkappa}) blow up as $\kappa\to0$, together with equation $(\ref{Harnack})$, then leads to the  following classification of the components of the limit $N$ of $N_{\kappa}$. This blowup analysis follows as in  \cite[Proposition 4]{SY81} (see also \cite{E09}). 
\begin{Theorem}\label{kto0}
Assume that $2\le n\le 6$ and let $(M^{n+1},g,K)$ be an initial data set and let $\partial\Omega$ be a smooth, strictly mean convex and outer untrapped  hypersurface in $M$. Then, for  $\e\le \min\left\{\frac{1}{(n+1)\lambda},\frac12\right\}$, where $\lambda=\max_i\{|\l_i|, \l_i\text{  eigenvalue of  } K\}$, there exists a sequence $\{\k_i\}_{i\in\N}$ with $\kappa_i\downarrow0$, together with an open and connected set $\Omega_\e$  such that if $\hat{u}_{\varepsilon,\k_i}$ solves $(*_{\hat{\varepsilon},\k_i})$ the following hold.

$(i)$ The sequence $\{\hat{u}_{\varepsilon, \k_i}\}_{i\in\N}$ converges  uniformly to $+\infty$ on $\partial\Omega_\e\setminus \partial \Omega$, and 
$\hat{u}_{\varepsilon,\k_i}$ converges locally smoothly to $\hat{u}_{\varepsilon}$ in $\Omega_\e$, where $\hat{u}_{\varepsilon}$ is a smooth function  that satisfies $(*_{\hat{\varepsilon}})$ in ${\Omega}_\e$.

$(ii)$  Each boundary component $\Sigma_{\e}$ of $\partial \Omega_\e\setminus\partial\Omega$ is an embedded MOTS satisfying $\theta^+_{\Sigma_\e}=H_{\Sigma_{\e}}+{\tr}_{\Sigma_{\e}}K=0$, where $H_{\Sigma_{\e}}$ is the mean curvature of $\Sigma_\e$ taken with respect to the inward pointing unit  normal to $\Omega_\e$.

$(iii)$ If $\Omega$ does not contain a closed MOTS in its interior, $\hat{u}_{\varepsilon,\k_i}$ converges to a smooth solution $\hat{u}_{\varepsilon}$ of $(*_{\hat{\varepsilon}})$ defined on all of $\bar{\Omega}$.  
\end{Theorem}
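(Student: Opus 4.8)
The plan is to follow the blow-up analysis of Schoen--Yau \cite[Proposition 4]{SY81}, adapted to the gradient-regularized equation $(*_{\hat\varepsilon,\kappa})$, using the a-priori estimates already established in Lemmas~\ref{supest}, \ref{interiorforkappa}, \ref{boundaryforkappa} together with the compactness and regularity theory for $C$-minimizing currents discussed in the paragraphs preceding the statement. First I would fix $\varepsilon\le\min\{\frac1{(n+1)\lambda},\frac12\}$, $s=1$, and for a sequence $\kappa_i\downarrow 0$ take solutions $\hat u_{\varepsilon,\kappa_i}$ of $(*_{\hat\varepsilon,\kappa_i})$ (which exist by Lemma~\ref{kappaexists}). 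Since the mean curvature of $N_{\kappa_i}=\graph\hat u_{\varepsilon,\kappa_i}$ is bounded by $C(\varepsilon)$ independently of $\kappa_i$ (using the bound $|\kappa\hat u_{\varepsilon,\kappa}|\le 2/\varepsilon$ from Lemma~\ref{supest}), the $N_{\kappa_i}$ are $C$-minimizing, so after passing to a subsequence they converge, smoothly by the bootstrapping argument already given, to a $C^\infty$ hypersurface $N$ (in dimension $n\le 6$ there is no singular set). I would then define $\Omega_\varepsilon$ to be the connected component containing $\partial\Omega$ of the set of points $x\in\Omega$ over which $\hat u_{\varepsilon,\kappa_i}$ stays locally bounded; equivalently, $\Omega_\varepsilon$ is carved out by the vertical pieces of the limit $N$. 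On $\Omega_\varepsilon$ the boundary gradient estimate (Lemma~\ref{boundaryforkappa}) and the interior gradient estimate (Lemma~\ref{interiorforkappa}), which is of the form $\sup|\nabla\hat u|\le \exp(\eta\sup\hat u)\cdot(\text{boundary data})$, give locally uniform $C^1$ bounds, hence by standard elliptic theory locally uniform $C^\infty$ bounds on $\hat u_{\varepsilon,\kappa_i}$, so a diagonal subsequence converges locally smoothly to a limit $\hat u_\varepsilon$ solving $(*_{\hat\varepsilon})$ (the $\kappa\hat u$ term drops out since $\kappa_i\to0$ and $\hat u_{\varepsilon,\kappa_i}$ is locally bounded). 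The fact that $\hat u_{\varepsilon,\kappa_i}\to+\infty$ uniformly on $\partial\Omega_\varepsilon\setminus\partial\Omega$, i.e. that the vertical pieces of $N$ really are ``walls'' and that the divergence is to $+\infty$ rather than $-\infty$, follows from the one-sided sign of the regularization term and the supremum bound $\hat u\ge 0$; this gives $(i)$.

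For $(ii)$, I would argue that each boundary component $\Sigma_\varepsilon$ of $\partial\Omega_\varepsilon\setminus\partial\Omega$ is a vertical cylinder $\Sigma_\varepsilon\times\mathbb R$ inside the limit $N\subset M\times\mathbb R$ (the limit graph becomes vertical where $\hat u_{\varepsilon,\kappa_i}$ blows up, by the gradient estimate). The key tool is the Harnack-type inequality \eqref{Harnack}: writing $\omega_i:=1/v_i$ for the vertical component of the unit normal to $N_{\kappa_i}$, we have $\Delta^{N_{\kappa_i}}\omega_i+\frac1\varepsilon\bar g(\nabla^{N_{\kappa_i}}\omega_i,\tau)\le\beta\omega_i$, so $\omega_i\ge 0$ satisfies a linear elliptic inequality with coefficients bounded uniformly in $\kappa_i$. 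Passing to the limit, $\omega:=1/v$ on $N$ satisfies the same inequality; where $N$ contains a vertical piece $\omega$ vanishes, and since $\omega\ge 0$ the strong maximum principle (Hopf) forces $\omega\equiv0$ on the connected component of $N$ containing that vertical piece, i.e. the component is entirely a vertical cylinder $\Sigma_\varepsilon\times\mathbb R$. On such a cylinder the null mean curvature of the graph equals (up to sign) the null mean curvature $H_{\Sigma_\varepsilon}+\tr_{\Sigma_\varepsilon}K$ of the cross-section $\Sigma_\varepsilon$ in $M$ computed with the horizontal normal, while the right-hand side of $(*_{\hat\varepsilon,\kappa_i})$ tends to zero (the term $\frac1{\varepsilon v_i}=\frac{\omega_i}\varepsilon\to0$ and $\kappa_i\hat u_{\varepsilon,\kappa_i}\to 0$ locally). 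Hence $\theta^+_{\Sigma_\varepsilon}=H_{\Sigma_\varepsilon}+\tr_{\Sigma_\varepsilon}K=0$, so $\Sigma_\varepsilon$ is an embedded MOTS; the embeddedness and smoothness come from the regularity of $C$-minimizing currents. I would also record, as in \cite[Proposition 4]{SY81}, that the orientation convention is such that $H_{\Sigma_\varepsilon}$ is taken with respect to the inward normal to $\Omega_\varepsilon$, which is forced by the side on which $\hat u_{\varepsilon,\kappa_i}\to+\infty$.

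For $(iii)$, if $\Omega$ contains no closed MOTS in its interior, then by $(ii)$ the set $\partial\Omega_\varepsilon\setminus\partial\Omega$ must be empty (any nonempty component would be an embedded closed MOTS in $\Omega$), so $\Omega_\varepsilon=\Omega$ and $\hat u_\varepsilon$ is defined and smooth on all of $\bar\Omega$, solving $(*_{\hat\varepsilon})$ with zero boundary data. I expect the main obstacle to be the careful justification that the blow-up locus $\partial\Omega_\varepsilon\setminus\partial\Omega$ is precisely a union of vertical cylinders over MOTS and that nothing is lost in the limit --- i.e. controlling the geometry of $N_{\kappa_i}$ uniformly near the blow-up set, propagating the vanishing of $\omega$ across a whole component via the Hopf lemma with the $\kappa_i$-uniform lower-order coefficient $\beta$, and verifying that the limit cylinder is genuinely a smooth embedded hypersurface rather than a lower-dimensional or immersed object. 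This is exactly the point where the dimensional restriction $n\le 6$ and the regularity theory of \cite{DS93, MaMi, Alex} enter, and where one must lean on the detailed arguments of \cite[Proposition 4]{SY81} and \cite{E09}; the rest is routine elliptic estimates and diagonal-subsequence extraction.
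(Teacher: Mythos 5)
Your setup for part (i) and your argument for (iii) match the paper: existence from Lemma~\ref{kappaexists}, the uniform mean curvature bound from Lemma~\ref{supest} making $N_{\kappa_i}=\graph\hat u_{\varepsilon,\kappa_i}$ a family of $C$-minimizing (i.e.\ $(C,1)$-minimal) hypersurfaces, smooth subconvergence to a smooth embedded $N$ for $2\le n\le 6$, the Hopf dichotomy for $1/v$ from \eqref{Harnack}, and the boundary gradient estimate of Lemma~\ref{boundaryforkappa} anchoring the graph near $\partial\Omega$. The genuine gap is in your part (ii). You assert that the blow-up locus appears inside the limit $N$ as ``vertical pieces'', and you then use the strong maximum principle to promote these to cylinders $\Sigma_\varepsilon\times\R$ on which you evaluate the equation. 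But the limit $N$ of the graphs contains \emph{no} vertical pieces at all: since $\hat u_{\varepsilon,\kappa_i}\ge 0$, the graphs lie in $\{z\ge0\}$, and a component on which $1/v\equiv0$ would be a complete cylinder, unbounded below --- this is exactly how the paper rules cylinders \emph{out} and concludes that every component of $N$ is a graph. Over the blow-up region the approximating graphs simply escape to $z=+\infty$ and leave no mass behind, so there is nothing in $N$ to which your Hopf argument can be applied, and the MOTS cannot be read off from $N$ directly. The paper instead performs a \emph{second} limiting procedure: it takes downward vertical translates $N_{a_i}=N-a_i$, $a_i\uparrow\infty$, of the limit graph, notes these are again $(C,1)$-minimal with uniformly bounded mean curvature, extracts a smooth limit, identifies it with $\Sigma_\varepsilon\times\R$ (since $N_{a_i}\to\Sigma_\varepsilon\times\R$ locally uniformly), and then uses that $N$ satisfies $H+P=\tfrac1\varepsilon\nu^{n+2}$, whose right-hand side vanishes in the cylindrical limit, to get $\theta^+_{\Sigma_\varepsilon}=0$. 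Your proof is missing this step, and without it statement (ii) is not established.

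Two secondary points. First, your claim that on the purported cylinders ``$\kappa_i\hat u_{\varepsilon,\kappa_i}\to0$ locally'' is unjustified precisely where it would be needed: on the blow-up region $\hat u_{\varepsilon,\kappa_i}$ can be of order $1/(\varepsilon\kappa_i)$, so $\kappa_i\hat u_{\varepsilon,\kappa_i}$ is only bounded by $2/\varepsilon$, not small; the paper sidesteps this by first passing to the $\kappa\to0$ limit on the graphical region (where $\hat u_\varepsilon$ is locally bounded, so the $\kappa$-term does drop) and only afterwards analyzing the blow-up via translations of $N$, which already satisfies the $\kappa$-free equation $(*_{\hat\varepsilon})$. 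Second, your route to ``locally uniform $C^1$ bounds'' via Lemma~\ref{interiorforkappa} does not work as stated, because that estimate involves $\exp(\eta\sup_\Omega\hat u)$ with the \emph{global} supremum, which blows up like $2/(\varepsilon\kappa)$ as $\kappa\to0$; this is harmless only because the locally smooth convergence of the functions on $\Omega_\varepsilon$ already follows from the smooth convergence of the graphs $N_{\kappa_i}\to N$ together with the positivity of $1/v$ there, which is how the paper argues.
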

\begin{proof}
As we explained before the statement  of the theorem, using standard results of $(C,1)$-minimal sets (see \cite{MaMi}), we have  that the graphs $N_{\k_ i}$ of the functions $\hat{u}_{\varepsilon,\k_i}$ converge locally smoothly to  a smooth embedded hypersurface $N$ in $\bar{\Omega}\times\mathbb{R}$. Moreover, since $N$ inherits its orientation from $N_{\k_i}$,  it follows that $\frac{1}{v_{\k_i}}=(\sqrt{1+|\nabla\hat{u}_{\varepsilon, \k_i}|^2})^{-1}$ converges (smoothly) to the vertical component, $\frac{1}{v}$, of the unit normal vector of $N$.   In view of \eqref{Harnack}, this limit satisfies $\Delta^N\left(\frac{1}{v}\right)+\frac{1}{\varepsilon}\bar g\left(\nabla^N \frac{1}{v},\tau\right)-\frac{\beta}{v}\leq0$. The Hopf maximum principle then says that on each connected component of $N$, we have that $\frac{1}{v}$ either vanishes identically---and the connected component is cylindrical---or else is everywhere positive---and the connected component is a graph. Note that here no component can be a cylinder, since the functions $\hat{u}_{\varepsilon,\k_i}$ are non-negative. Furthermore, the boundary gradient estimates given in Lemma~\ref{boundaryforkappa} ensures that the graphs $N_{\k_i}$ must remain bounded near $\partial\Omega$, and thus  the limit $N$ is a graph near $\partial\Omega$.
Therefore, $N$ is the graph of a function, which we call $\hat{u}_{\varepsilon}$, defined on an open (non-empty) subset of $\Omega$, which we call $\Omega_\e$. The locally smooth convergence $\hat u_{\e, \k_i}\to\hat u_\e$ then immediately yields that $\hat{u}_{\varepsilon}$ satisfies $(*_{\hat\varepsilon})$ in ${\Omega}_\e$ and diverges to infinity on approach to $\partial\Omega_\e\setminus\partial\Omega$.  This finishes the proof of (i).

To prove (ii), we need to show that the set $\Sigma_\e=\partial\Omega_\e\setminus \partial \Omega$, where the function $\hat u_\e$ tends to infinity, is a MOTS (note $\Sigma_\e$ as defined here might have more than one connected component). Since $N=\graph \hat u_\e$ over $\Omega_\e$ and $\hat u_\e$ satisfies $(*_{\hat\varepsilon})$, we have that
$H+P=\frac{1}{\e\sqrt{1+|\nabla u_{\hat\e}|^2}}=\frac 1\e \nu^{n+2}$, where $H=H(x)$ is the mean curvature of $N$ at $(x, \hat u_\e(x))$, $P=P(\nu)=(g^{ij}-\nu^i\nu^j)K_{ij}$, $\nu=\nu(x)$ is the upward pointing unit normal to $N$ at $(x, \hat u_\e(x))$ and $\nu^{n+2}$ is its vertical component. We consider vertical translations, $N_{\a_i}= N-\a_i$, of $N$ for a sequence $\{\a_i\}_{i\in\N}\subset \R$ with $a_i\uparrow \infty$. $N_{\a_i}$ have uniformly bounded mean curvature and thus are  $(C,1)$-minimal sets. Therefore, we can argue as with the convergence $N_{\k_ i}\to N$, using the results of \cite{DS93, MaMi}, to conclude that, after passing to a subsequence, $N_{\a_i}\to \wt N$ locally smoothly (note again that the mean curvature of $N_{\a_i}$ can be expressed in terms of its normal). Since we also have that $N_{\a_i}\to \Sigma_\e\times \R$ locally uniformly, we conclude that $\wt N =\Sigma_\e\times \R$. The locally smooth convergence $N_{\a_i}\to \Sigma_\e\times \R$, along with the fact that for $N=\graph \hat u_\e$ we have
$H+P=\frac 1\e \nu^{n+2}$, implies that the limit  $\Sigma_\e\times \R$ is a MOTS.

Finally, we note that if $\Omega$ does not contain a closed MOTS in its interior, then $\Sigma_\e=\emptyset$ and therefore (iii) holds.
\end{proof}

\begin{Remark} It is interesting to observe that the elliptic regularization problem $(*_{\hat{\varepsilon}})$ provides a new way to locate MOTS in space-time initial data sets with a mean convex, outer-untrapped hypersurface. In the following section we will show that the hypersufaces $\Sigma_\e$ are not only  MOTS but they are indeed  the outermost MOTS and thus this is actually a way to  locate the outermost MOTS.
\end{Remark}

\section{Convergence to the outermost MOTS}\label{outermostMOTSsection}

In this section we will show that the set where the functions $\hat u_\e$ blow up---that is, the inner boundary of the set $\Omega_\e$ as defined in Theorem~\ref{kto0}---is not only a MOTS but it is actually the outermost MOTS. We will do this by modifying the initial data $K$ inside the outermost MOTS.

There is a notion of stability for MOTS analogous to the notion of stability for minimal hypersurfaces (see \cite{AMS08}) which allows for many results  from the case of stable minimal hypersurfaces to be generalized in the case of stable MOTS, even though the stability operator in the case of MOTS is not
self-adjoint. 
It is known that the outermost MOTS, $\Sigma_\out=\partial \Omega_\out$, is stable (see \cite{AM09}), something that  was used in  the proof of \cite[Theorem 5.1]{AM09} to show that one can change the  initial data $K$ in $\Omega_\out$, so that there exists a smooth outer trapped hypersurface $\Sigma^-$ (i.e. satisfying $\theta^+(\Sigma^-)=H+P<0$) inside $\Sigma_\out$ (i.e. $\Sigma^-\subset\Omega_\out$). 

In order to prove that the functions $\hat u_\e$ (as defined in Theorem  \ref{kto0}) blow up over the outermost MOTS, we show that they satisfy $\hat u_\e\ge \delta^{-1}$ over  $\Sigma^-$ (with $\Sigma^-$ as above) for any constant $\delta>0$. To do this we will flow $\Sigma^-$ by  smooth null mean curvature flow, as defined in $(*)$, in order to create lower barriers for the solutions $\hat u_{\e}$ of the equations $({*_{\hat{\varepsilon}}})$ which are greater than $\delta^{-1}$ over (and inside) $\Sigma^-$.
Before we make this rigorous, we recall the construction of $\Sigma^-$ in \cite[Theorem 5.1]{AM09} as we would like to make some minor modifications.
Let $\psi>0$ be the principal eigenfunction of the stability operator (which is derived by the variation of $\theta^+$, see \cite{AMS08, AM09}) and extend the vector field $\psi\nu$ to a neighborhood of $\Sigma_\out$, where $\nu$ is the outward pointing unit normal to $\Sigma_\out$. By flowing $\Sigma_\out$ in the direction $-\psi\nu$, we construct, for some $\s>0$, a foliation $\{\Sigma_{\out,t}\}_{t\in(-4\s,0]}$ of a neighborhood of $\Sigma_\out$, such that $\Sigma_{\out, 0}=\Sigma_\out$, $\Sigma_{\out, t}$  lies inside  $\Sigma_\out$ (i.e. $\Sigma_{\out, t}\subset\Omega_\out$) for all $t\in (-4\s,0)$ and 
\begin{equation}\label{KK'}
\frac{\partial}{\partial t}\bigg|_{t=0}\theta^+(\Sigma_{\out, t})=0.
\end{equation}
We define then the new data by
\begin{equation}\label{K'}
K'=K-\frac{1}{n}\phi(t)g,
\end{equation}
where  $\phi:\R\to\R$ will be chosen momentarily. Then, with respect to the new data, the null mean curvature of the hypersurfaces $\Sigma_{\out, t}$, $\theta_{K'}^+(\Sigma_{\out, t})$, is given by
\[\begin{split}\theta_{K'}^+(\Sigma_{\out, t})&=(H+P)(\Sigma_{\out,t})=\dvg\nu_t+(g^{ij}-\nu_t^i\nu_t^j)K'_{ij}\\
&=\dvg\nu_t+(g^{ij}-\nu_t^i\nu_t^j)\left(K_{ij}-\frac{1}{n}\phi(t)g_{ij}\right)=\theta^+(\Sigma_{\out, t})-\phi(t).
\end{split}\]
We now choose $\phi$ to be such that $\phi(t)=0$ for  $t> 0$ so that $K'=K$ outside $\Sigma_\out$.
Moreover, since $\theta^+(\Sigma_{\out, t})$ vanishes to first order in $t$ at $t = 0$ by \eqref{KK'},  $\phi$ can be chosen so that it is $C^{1,1}$,  $\theta_{K'}^+(\Sigma_{\out, t})<0$ for all $t\in(-4\s,0)$  and  $\|K'\|_{C^1}\le 2\|K\|_{C^1}$. 
In fact, we can also choose $\phi$ so that  the eigenvalues of $K'$ are controlled in the region foliated by $\{\Sigma_{\out,t}\}_{t\in(-4\s,-2\s]}$, by paying with the fact that $\|K'\|_{C^1}$ will now depend not only on $\|K\|_{C^1}$, but also on $\s$: Setting $U_{2\s}=\{\Sigma_{\out,t}\}_{t\in(-4\s,-2\s]}$, we choose $\phi$ so that the new data have the additional property that
for any $v\in \R^{n+1}$
\begin{equation}\label{lK}
 v^iv^jK'_{ij}=v^iv^jK_{ij}-\frac{\phi}{n} |v|^2\le\left(\l_{\max}-\frac{\phi}{n}\right)|v|^2\le0\text{   in   }U_{2\s},
 \end{equation}
where  $\lambda_{\max}=\max_i\{\l_i, \l_i\text{  eigenvalue of  } K\}$. The new data now satisfies $\|K'\|_{C^1}\le C(\|K\|_{C^1},\s)$ and $K'=K$ outside $\Sigma_\out$.

Henceforth we take $\Sigma^-$ to be one of the leaves $\Sigma_{\out, t}$ for some $t\in(-4\s,-3\s)$   so that 
\begin{equation}\label{P-}
(H+P)(\Sigma^-)=\dvg\nu+( g^{ij}-\nu^i\nu^j)K'_{ij}<0,
\end{equation}
where  $\nu$ denotes the outward  pointing unit normal to $\Sigma^-$. 

\begin{Remark}\label{SUrmk} Let $\Sigma^-$ be as above,  satisfying \eqref{P-}. We then have short time existence of a (smooth) solution of the equation
\begin{equation}\label{eqnP-}
\left\lbrace
\begin{aligned}
 \frac{\partial F^-}{\partial t}(x,t) & = -(H+P)(x,t)\nu(x,t), \quad x\in\Sigma^-,\,t\geq0, \\
                         F^-(\Sigma^-,0) & = \Sigma^-,
\end{aligned}
\right.
\end{equation}
where $H+P$ is defined using the new data $K'$ as in  \eqref{K'} and satisfying also \eqref{lK} (see for example \cite{CB, GP99}). That is, there exists $T>0$ and a smooth solution  $F^-:\Sigma^-\times[0,T)\to M$
of  \eqref{eqnP-}. The evolution equation of $H+ P$ (given  in \eqref{heH+P}) along
with  the maximum principle and \eqref{P-}, implies that $\Sigma^-$ flows towards $\Sigma_\out$ and
 $F^-(\Sigma^-, {t_1})\cap F^-(\Sigma^-, {t_2})=\emptyset$ for any $t_1\ne t_2$. For $\tau=\min\{T/2,\s\}$, we let 
 \[ U_\t:=\bigcup_{0<t<\tau}F^-(\Sigma^-, {t})\subset M\]
and let $u^-:U_\t\to \R$ be defined by $u^-(p)=t\Leftrightarrow p\in F^-(\Sigma^-, {t})$. Then, we have that $u^-$ is a smooth solution of the following equation over $\ov U_\t$ 
\begin{equation}\label{ellH+Ps}
\dvg\left( \frac{\nabla u^-}{|\nabla u^-|}\right) +\left(g^{ij}-\dfrac{\nabla^iu^-\nabla^ju^-}{|\nabla u^-|^2}\right)K'_{ij}= \frac{-1}{|\nabla u^-|}
\end{equation}
(cf. $(**)$ and note the change of sign in front of the $K'$-term on the left-hand side), such that $u^-=0$ on $\Sigma^-$ and $u^-=\tau$ on $\partial U_\t\setminus\Sigma^-=F^-(\Sigma^-, \tau)$. Furthermore, there exists some constant $C_0\ge1$ such that
\begin{equation}\label{Dus}
\frac{1}{C_0}\le |\nabla u^-|\le C_0\,\,,\,\,|\nabla^2 u^-|\le C_0\text{    in   }\ov U_\t.
\end{equation}

\end{Remark}

We will show that, for an appropriately chosen $\psi$ and with $u^-$ as in Remark~\ref{SUrmk}, the function $\psi\circ u^-$ is a lower barrier for solutions $\hat u_{\e}$ of the equation (${*_{\hat{\varepsilon}}}$).
The idea of {\it bending} the (short time) smooth solution to get boundary barriers for the approximating solutions is applied in  \cite{S08}, where in \cite[Lemma 4.2]{S08} such a construction was used for the mean curvature flow.

\begin{Lemma}\label{barrier}
Let $\Sigma^-$, $\tau$, $U_\t$, $u^-$ and $C_0$ be as in Remark~\ref{SUrmk}. Let $U\subset M$ be such that $U_\t\subset U$ and $\partial U=\partial U_\t\setminus\Sigma^-=F^-(\Sigma^-, \tau)$, and extend $u^-$ in $U$ so that it is zero in $U\setminus U_\t$. Then, for any $\delta>0$ there exists a $C^2$ function $\psi:\R\to\R$ such that the following holds.  The function $v=\psi\circ u^-:U\to\R$ is a $C^2$ function such that $\forall \e\le\e_1=\e_1(C_0,\t)$  (a constant that depends only on $C_0$ and $\t$)
\[\begin{split}\mathcal M_\e(v):=& \left(g^{ij}-\frac{\nabla^iv\nabla^jv}{|\nabla v|^2+1}\right)\nabla^{ij}v\\
&-\left(g^{ij}-\dfrac{\nabla^iv \nabla^j v}{|\nabla v|^2+1}\right)K'_{ij} \sqrt{|\nabla v|^2+1}+\frac{1}{\e}\ge 0
\end{split}\]
 and 
\[\begin{split}v= 0&\text{ on  }\partial U, \\
v\ge \frac{1}{\delta}&\text{ in  }U\setminus U_\t.
\end{split}\]
\end{Lemma}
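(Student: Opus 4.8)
The plan is to pick the one–variable profile $\psi$ so that $v=\psi\circ u^-$ is a subsolution of the $\e$–regularized equation with data $K'$, i.e. $\mathcal M_\e(v)\ge 0$; the prescribed boundary behaviour of $\psi$ will then yield the two stated inequalities for $v$.

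\emph{Step 1 (reduction to a one–dimensional inequality).} On $U\setminus U_\tau$ we have $u^-\equiv 0$, hence $v\equiv\psi(0)$ and $\mathcal M_\e(v)=-g^{ij}K'_{ij}+\tfrac1\e>0$ for $\e$ small. On $U_\tau$ write $r=|\nabla u^-|\in[C_0^{-1},C_0]$, $\nu=\nabla u^-/|\nabla u^-|$, $R=\nabla^iu^-\nabla^ju^-\nabla_{ij}u^-$, and let $H=\dvg\nu$, $P=(g^{ij}-\nu^i\nu^j)K'_{ij}$ be the mean curvature and $P$–term of the level sets of $u^-$, so that \eqref{ellH+Ps} reads $H+P=-1/r$. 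Setting $p=\psi'(u^-)$, $q=\psi''(u^-)$ and $G=|\nabla v|^2+1=p^2r^2+1$, and using $\nabla v=p\,\nabla u^-$, $\nabla^2 v=q\,\nabla u^-\otimes\nabla u^-+p\,\nabla^2 u^-$ together with \eqref{ellH+Ps}, a direct computation gives
\[
\mathcal M_\e(v)=\frac{q r^2}{G}+H\bigl(pr+\sqrt G\bigr)+\frac{pR}{r^2G}+\frac{\sqrt G}{r}-\frac{\nu^i\nu^jK'_{ij}}{\sqrt G}+\frac1\e .
\]

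\emph{Step 2 (estimate with $\psi$ nonincreasing).} Take $\psi'\le 0$, so $p\le 0$. Then $pr+\sqrt G=(\sqrt G+|p|r)^{-1}\in(0,1]$, whence the second term is bounded by $\sup_{U_\tau}|H|\le H_0=H_0(C_0,n)$ (using \eqref{Dus}); since $|R|\le r^2C_0$ and $G\ge 2|p|r$, the third term is bounded by $\tfrac12 C_0^2$; since $\sqrt G\ge 1$, the fifth term is bounded by $\|K'\|_{C^0}$; and $\sqrt G/r>0$. Therefore
\[
\mathcal M_\e(v)\ \ge\ \frac{q r^2}{G}-H_0-\tfrac12 C_0^2-\|K'\|_{C^0}+\frac1\e .
\]
If in addition $\psi''\ge -L\bigl(1+(\psi')^2\bigr)$ on $[0,\tau]$ for some constant $L$, then $\tfrac{qr^2}{G}\ge\tfrac{-L(1+p^2)r^2}{p^2r^2+1}\ge -LC_0^2$, and so $\mathcal M_\e(v)\ge 0$ whenever $\e\le\e_1:=\bigl(LC_0^2+H_0+\tfrac12C_0^2+\|K'\|_{C^0}\bigr)^{-1}$ (shrinking $\e_1$ if needed to also cover $U\setminus U_\tau$). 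It remains only to produce such a profile $\psi$.

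\emph{Step 3 (construction of $\psi$).} We seek $\psi\in C^2(\R)$ with $\psi(0)\ge 1/\delta$, $\psi(\tau)=0$, $\psi'\le 0$, $\psi''\ge -L(1+(\psi')^2)$, and moreover $\psi'(0)=\psi''(0)=0$ — this last requirement being forced, since across $\Sigma^-$ the extended $u^-$ is only Lipschitz (its gradient jumps from $0$ to $\nabla u^-|_{\Sigma^-}\ne 0$), and $\psi'(0)=\psi''(0)=0$ is precisely what renders $v=\psi\circ u^-$ of class $C^2$ there. Fix $L=2\pi/\tau$. The solution of $\psi''=-L(1+(\psi')^2)$ with $\psi'(w_0)=0$ is $\psi'(w)=-\tan\!\bigl(L(w-w_0)\bigr)$, which runs off to $-\infty$ as $w\uparrow w_0+\tfrac{\pi}{2L}<\tau$. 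Hence, starting $\psi$ with a short flat piece at height $1/\delta$ near $w=0$ (which provides $\psi(0)=1/\delta$ and $\psi'(0)=\psi''(0)=0$), then letting $\psi'$ descend along a smoothing of this barrier curve to a large but \emph{finite} negative value (of order $-1/(\delta\tau)$) before leveling off, one can arrange the total decrease of $\psi$ to be any prescribed amount — in particular $1/\delta$ — while keeping $\psi'\le 0$ and $\psi''\ge -L(1+(\psi')^2)$ throughout, and $\psi\in C^2$. With this $\psi$, Step 2 gives $\mathcal M_\e(v)\ge 0$ for all $\e\le\e_1=\e_1(C_0,\tau)$; and by construction $v=\psi(\tau)=0$ on $\partial U=\{u^-=\tau\}$ while $v=\psi(0)\ge 1/\delta$ on $U\setminus U_\tau=\{u^-=0\}$, which is the assertion of the Lemma.

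\emph{Main obstacle.} The only real difficulty is Step 3: $\psi$ must shed the unbounded quantity $1/\delta$ over the fixed interval $[0,\tau]$ while respecting $\psi''\ge -L(1+(\psi')^2)$. This is feasible precisely because the Riccati barrier forces $|\psi'|$ to blow up in finite parameter–length $\pi/(2L)$, which we make shorter than $\tau$ by taking $L$ large — at the sole cost of a smaller $\e_1$. The identity of Step 1, the $C^0$–estimates from \eqref{Dus}, and the $C^2$–regularity of $v$ across $\Sigma^-$ are all routine.
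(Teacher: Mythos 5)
Your proposal is correct in substance, but it runs through a genuinely different computation than the paper's. Your Step 1 identity checks out: substituting $\nabla v=\psi'\nabla u^-$, $\nabla^2v=\psi''\,\nabla u^-\otimes\nabla u^-+\psi'\nabla^2u^-$ into $\mathcal M_\e$ and eliminating $\tr K'$ via \eqref{ellH+Ps} leaves, besides $\psi''r^2/G$ and $1/\e$, only $H(\psi'r+\sqrt G)$, $\psi'R/(r^2G)$, $\sqrt G/r$ and $-\nu^i\nu^jK'_{ij}/\sqrt G$, and for $\psi'\le0$ all of these are bounded in terms of $C_0$, $n$ and $\|K'\|_{C^0}$ by \eqref{Dus}. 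The paper groups things differently: it first evaluates the unregularized operator on $v$ using $\mathcal M(u^-)=0$ and then estimates the $\e$-corrections; in that grouping the zeroth-order $K'$-terms are of size $|\nabla v|\,\|K'\|_{C^0}$, hence unbounded as $\psi'$ blows up, and they are handled by the sign property \eqref{lK} of the modified data $K'$ (this is precisely why $K'$ was arranged to be nonpositive near $\Sigma^-$). Your grouping cancels the dangerous terms exactly against the $H\sqrt G$ contribution, so \eqref{lK} is never needed; the price is that your $\e_1$ also depends on $n$ and $\|K'\|_{C^0}$ (both on $U_\t$ and on $U\setminus U_\t$, where $\mathcal M_\e(v)=-\tr K'+1/\e$), which is formally weaker than the stated $\e_1=\e_1(C_0,\t)$ but harmless for Theorem~\ref{blowup}, since $\|K'\|_{C^0}$ is controlled by the initial data; invoking \eqref{lK} at the end would remove even this dependence. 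The profile construction also differs: the paper glues a logarithmic piece to a quartic and works with the condition $-\psi'-\min\{C_0^2|\psi''|,|\psi''|/(\psi')^2\}\ge-C(\t,C_0)$, while you use the Riccati barrier $\psi''\ge-L(1+(\psi')^2)$ with $L=L(\t)$ fixed; both exploit the same mechanism, namely that such a one-sided bound on $\psi''$ allows an arbitrarily large drop of $\psi$ over the fixed interval $[0,\t]$.

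The only place where you should be more careful is the $C^2$ construction in Step 3: concatenating the flat piece with the exact solution $\psi'=-\tan(L(w-w_0))$ is only $C^1$ at the junction (there $\psi'=0$ but $\psi''=-L$), and mollifying $\psi'$ does not obviously preserve the nonlinear constraint. A clean realization of your idea: take $\chi\in C^1([0,\t])$ with $0\le\chi'\le1$, $\chi\equiv0$ near $0$, and $\sup\chi<\pi/(2L)$ sufficiently close to $\pi/(2L)$, and set $\psi(w)=\int_w^\t\tan\bigl(L\chi(s)\bigr)\,ds$; then $\psi'=-\tan(L\chi)\le0$, $\psi''=-L\chi'\bigl(1+(\psi')^2\bigr)\ge-L\bigl(1+(\psi')^2\bigr)$, $\psi'(0)=\psi''(0)=0$, $\psi(\t)=0$, and $\psi(0)\ge1/\delta$ once $\sup\chi$ is close enough to $\pi/(2L)$ (with $L=2\pi/\t$ the ramp fits inside $[0,\t]$ with slope at most $1$). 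With that detail supplied, your argument is complete.
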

\begin{proof}
Omitting the  ``$-$'' superscript for simplicity, thus writing $u=u^-$, we set $v=\psi(u)$, where $\psi:[0,\t]\to\R$ is a $C^2$ function. Provided that $v\in C^2(U)$, we then have
\begin{equation}\label{MvMueqn}
\begin{split}
\mathcal M(v):=&\frac{1}{|\nabla v|}\left(g^{ij}-\frac{\nabla^i v\nabla^j v}{|\nabla v|^2}\right)\nabla_{ij}v+\left(g^{ij}- \frac{\nabla^i v\nabla^j v}{|\nabla v|^2}\right) K_{ij} +\frac{1}{|\nabla v|}\\
=&\frac{1}{{\psi'|\nabla u|}}\left({g^{ij}}-\frac{(\psi')^2\nabla^i u\nabla^j u}{(\psi')^2|\nabla u|^2}\right)(\psi'\nabla_{ij}u+\psi''\nabla_i u\nabla_ju)\\
&+\left(g^{ij}- \frac{(\psi')^2\nabla^i u\nabla^j u}{(\psi')^2|\nabla u|^2}\right) K_{ij} +\frac{1}{|\nabla v|}\\
=&\mathcal M(u)-\frac{1}{|\nabla u|}+\frac{1}{|\nabla v|}=\frac{1-\psi'}{|\nabla v|},
\end{split}
\end{equation}
where we have used \eqref{ellH+Ps} which implies that $\mathcal M(u)=0$ (note that in the above calculation, the terms involving $\psi''$ cancel).
 We now compute  
\begin{equation}\label{Mk}\begin{split}\mathcal M_\e(v)=&|\nabla v| \mathcal M(v) -1+\frac{1}{\e}+\left(\frac{\nabla^iv\nabla^jv}{|\nabla v|^2}-\frac{\nabla^iv\nabla^jv}{|\nabla v|^2+1}\right)\nabla_{ij}v\\
&-|\nabla v|\left(g^{ij}-\dfrac{\nabla^iv \nabla^j v}{|\nabla v|^2}\right)K'_{ij}-\left(g^{ij}-\dfrac{\nabla^iv \nabla^j v}{|\nabla v|^2+1}\right)K'_{ij} \sqrt{|\nabla v|^2+1}\\
=&-\psi'+\frac1\e +\frac{\nabla^i u\nabla^j u}{|\nabla u|^2(1+|\nabla v|^2)}(\psi'\nabla_{ij}u+\psi''\nabla_i u\nabla _ju)\\
&-|\nabla v|\left(g^{ij}-\dfrac{\nabla^iv \nabla^j v}{|\nabla v|^2}\right)K'_{ij}-\left(g^{ij}-\dfrac{\nabla^iv \nabla^j v}{|\nabla v|^2+1}\right)K'_{ij} \sqrt{|\nabla v|^2+1}.
\end{split}
\end{equation}
We estimate the terms on the right-hand side of \eqref{Mk}, using the estimates \eqref{Dus} and the property of $K'$  \eqref{lK}, as follows.
\begin{equation}\label{term1}
\begin{split}
\frac{\nabla^i u\nabla^j u}{|\nabla u|^2(1+|\nabla v|^2)}(\psi'\nabla_{ij}u+\psi''\nabla_i u\nabla _ju)&\ge -\frac{|\psi''||\nabla u|^2+C_0|\psi'|}{1+(\psi')^2|\nabla u|^2}\\
&\ge -\left(C_0+\min\left\{C_0^2|\psi''|,\frac{|\psi''|}{(\psi')^2}\right\}\right),
\end{split}
\end{equation}
and
\begin{equation}\label{term2}
\begin{split}
-|\nabla v|\left(g^{ij}-\dfrac{\nabla^iv \nabla^j v}{|\nabla v|^2}\right)K'_{ij}-&\left(g^{ij}-\dfrac{\nabla^iv \nabla^j v}{|\nabla v|^2+1}\right)K'_{ij} \sqrt{|\nabla v|^2+1}\\
\ge&(|\nabla v|+\sqrt{|\nabla v|^2+1})\left(-\tr K'+\l'_{\min}\right)\ge 0,
\end{split}
\end{equation}
where $\l'_{\min}=\min_i\{\l'_i, \l'_i\text{  eigenvalue of  } K'\}$. We claim now  that there exists a $C^2$ function $\psi:[0,\tau]\to \R$ such that $v=\psi\circ u\in C^2(U)$ and such that
\[v(x)=\psi(u(x))=\begin{cases}\psi(\tau)=0&\text{ on  }\partial U,\\
\psi(0)\ge \frac{1}{\delta} &\text{ in  } U\setminus U_\t,
\end{cases}\]
and
\begin{equation}\label{psi'}
-\psi'-\min\left\{C_0^2|\psi''|,\frac{|\psi''|}{(\psi')^2}\right\}\ge -C=-C(\tau, C_0).
\end{equation}
The existence of such a function $\psi$ implies then, after using the estimates \eqref{term1}, \eqref{term2} and \eqref{psi'} in \eqref{Mk}, 
\[
\mathcal M_\e(v)\ge\frac{1}{\e}-C_0-C,\]
where $C=C(\tau, C_0)$ is the constant from the estimate  \eqref{psi'}, and thus taking $\e_1=\e_1(C_0,\t)=({C_0+C})^{-1}$ we have that for all $\e\le \e_1$
\[\mathcal M_\e(v)\ge 0.\]
This concludes the proof of the lemma, provided that there exists a function $\psi$ as we claimed above and which we now construct.

For any $\d\in(0,1]$, we define $\zeta:[0,2]\to\R$ by
\[\zeta(t)=\begin{cases}\log\left(\frac{t}{\d}+1\right)&\text{ for  } t\in\left[0,1\right],\\
c_1+c_0\left(t-1\right)-\frac{c_0^2}{2}\left(t-1\right)^2\\
\hspace{.4cm}+\frac{2c_0^2-3c_0}{3}\left(t-1\right)^3+\frac{-c_0^2+2c_0}{4}\left(t-1\right)^4 &\text{ for  } t\in\left[1,2\right],
\end{cases}\]
where $c_1=\log\left(\frac1\d+1\right)$ and $c_0=\frac{1}{1+\d}$. Note first that
\[\zeta(t)=\begin{cases}0&\text{ for  } t=0,\\
c_1+\frac{c_0}{2}-\frac{c_0^2}{12}\ge c_1=\log\left(\frac1\d+1\right)&\text{ for  } t=2.
\end{cases}\]
For the derivatives of $\zeta$, we have
\[\zeta'(t)=\begin{cases}\frac{1}{t+\d}&\text{ for  } t\in\left(0,1\right],\\
\frac{1}{1+\d}=c_0&\text{ for  } t=1,\\
c_0-{c_0^2}\left(t-1\right)+({2c_0^2-3c_0})\left(t-1\right)^2,\\
\hspace{.4cm}+({-c_0^2+2c_0})\left(t-1\right)^3 &\text{ for  } t\in\left[1,2\right]\\
0&\text{ for  } t=2,
\end{cases}\]
and
\[\zeta''(t)=\begin{cases}-\frac{1}{(t+\d)^2}&\text{ for  } t\in\left(0,1\right],\\
-\frac{1}{(1+\d)^2}=-c_0^2&\text{ for  } t=1,\\
-{c_0^2}+({4c_0^2-6c_0})\left(t-1\right)+({-3c_0^2+6c_0})\left(t-1\right)^2 &\text{ for  } t\in\left[1,2\right],\\
0&\text{ for  } t=2,
\end{cases}\]
where of course here at $t=2$ we mean the  left derivatives, and hence $\zeta$ is a $C^2$ function. We further note that
\[\zeta'-\min\left\{|\zeta''|,\frac{|\zeta''|}{(\zeta')^2}\right\}\ge\begin{cases}\zeta' -\frac{|\zeta''|}{(\zeta')^2}\ge -1&\text{ for  } t\in\left(0,1\right],\\
 \zeta'-|\zeta''|\ge -20c_0\ge -20&\text{ for  } t\in\left[1,2\right].
 \end{cases}
\]
We now define the function $\psi:[0,\tau]\to\R$ by 
\[\psi(t)=\zeta\left(\frac{2}{\tau}(\tau-t)\right)\]
and claim that this is the desired function. Note first that 
 $\psi\in C^2((0,\tau))$  and for the (right) derivatives at zero we have
\[\psi'(0)=-\frac{2}{\t}\zeta'(2)=0\,\,,\,\,\psi''(0)=\frac{4}{\t^2}\zeta''(2)=0.\]
Hence, the function $v=\psi\circ u$ is also $C^2$ and satisfies
\[v(x)=\psi(u(x))=\begin{cases}\psi(\tau)=\zeta(0)=0&\text{ on  }\partial U,\\
\psi(0)=\zeta(2)\ge c_1=\log\left(\frac1\d+1\right) &\text{ in  } U\setminus U_\t.
\end{cases}\]
Finally, we have
\[\begin{split}-\psi'(t)&-\min\left\{C_0^2|\psi''|,\frac{|\psi''|}{(\psi')^2}\right\}\\
&=\frac{2}{\tau}\zeta'\left(\frac2\tau(\tau-t)\right)-\min\left\{\frac{4 C_0^2}{\tau ^2}|\zeta''|,\frac{|\zeta''|}{(\zeta')^2}\right\}
\ge -20\frac{4 C_0^2}{\tau^2}.
\end{split}\]
Therefore, the function $\psi$ as defined above has all the required properties, after replacing $\d$ by $(e^{1/\d}-1)^{-1}$.
\end{proof}

As a direct consequence of Lemma~\ref{barrier} and the comparison principle we obtain the following. 
\begin{Theorem}\label{blowup}
There exists an $\e_0$ depending only on the initial data, such that for any $\e\le \e_0$ the following holds. There exists a solution $\hat u_\e\in C^\infty(\Omega_0)$ of the equation ($*_{\hat\e}$), where  $\Omega_0=\Omega\setminus\ov\Omega_\infty$, such that $\hat u_\e$ blows up over the inner boundary $\partial\Omega_0\setminus\partial\Omega=\Sigma_\out$ (the outermost MOTS) and is  zero over the outer boundary $\partial \Omega$.

 Consequently the function $u_\e=\e\hat u_\e$ is then a smooth solution of ($*_{\e}$) in $\Omega_0$ that blows up over the inner boundary $\Sigma_\out$ (the outermost MOTS) and is  zero over the outer boundary $\partial \Omega$.
\end{Theorem}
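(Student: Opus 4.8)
The plan is to run the construction of Theorem~\ref{kto0} \emph{with the modified data $K'$} of \eqref{K'} in place of $K$, and then to identify the resulting blow-up locus with the outermost MOTS $\Sigma_\out$. Since $K'$ agrees with $K$ outside $\Sigma_\out$ --- in particular near $\partial\Omega$ and on all of $\Omega_0=\Omega\setminus\ov\Omega_\infty$ --- the hypersurface $\partial\Omega$ is still mean convex and outer untrapped for $K'$, and $\l'=\max_i|\l'_i|$ is controlled by the initial data because $\|K'\|_{C^1}\le C(\|K\|_{C^1},\s)$. Hence there is an $\e_0>0$ depending only on the initial data --- the minimum of the threshold of Theorem~\ref{kto0} for the data $K'$ and of $\e_1(C_0,\t)$ from Lemma~\ref{barrier} --- so that for every $\e\le\e_0$ Theorem~\ref{kto0} produces $\hat u_\e\in C^\infty(\Omega_\e)$ solving $(*_{\hat\e})$ for $K'$, vanishing on $\partial\Omega$, diverging to $+\infty$ on $\Sigma_\e:=\partial\Omega_\e\setminus\partial\Omega$, with $\Omega_\e$ open and connected and $\Sigma_\e$ an embedded MOTS for $K'$. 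The whole task is then to show $\Omega_\e=\Omega_0$: once this is known $\Sigma_\e=\Sigma_\out$, and since $K'=K$ on $\Omega_0$ the function $\hat u_\e$ in fact solves $(*_{\hat\e})$ with the original data there, and rescaling $u_\e=\e\hat u_\e$ gives the assertion for $(*_\e)$.

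First I would show the blow-up cannot happen before reaching $\Sigma^-$. Fix $\e\le\e_0$ and let $\Omega^-\subset\Omega_\out$ be the region enclosed by $\Sigma^-$. Given $\d>0$, Lemma~\ref{barrier} supplies $v=\psi\circ u^-\in C^2(U)$ on the domain $U$ enclosed by $F^-(\Sigma^-,\t)$ (so $U\subset\Omega_\out$ and $\ov U\cap\partial\Omega=\emptyset$), with $\mathcal M_\e(v)\ge0$ --- which is precisely the subsolution condition for $(*_{\hat\e})$ with data $K'$ --- together with $v=0$ on $\partial U$ and $v\ge1/\d$ on $\ov\Omega^-=U\setminus U_\t$. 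On the overlap $\Omega_\e\cap U$ one has $v\le\hat u_\e$ along the boundary: on $\partial U$ since $v=0\le\hat u_\e$, on any portion lying on $\Sigma_\e$ since $\hat u_\e\to+\infty$ there, while nothing lies on $\partial\Omega$ because $\ov U\cap\partial\Omega=\emptyset$. The comparison principle for the quasilinear elliptic operator $\mathcal M_\e$ then gives $v\le\hat u_\e$ on $\Omega_\e\cap U$, so $\hat u_\e\ge1/\d$ on $\Omega_\e\cap\ov\Omega^-$; letting $\d\to0$ forces $\Omega_\e\cap\ov\Omega^-=\emptyset$. Combined with the connectedness of $\Omega_\e$ and the strong maximum principle, which forbids a tangency between the MOTS $\Sigma_\e$ and the strictly outer-trapped $\Sigma^-$, this shows $\Sigma^-$ lies strictly inside $\Sigma_\e$.

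Next I would exclude blow-up strictly inside $\Sigma_\out$. If a component of $\Sigma_\e$ were strictly inside $\Sigma_\out$, then by the previous step it would lie in the shell $\bigcup_{t\in(t_0,0)}\Sigma_{\out,t}$ foliated by leaves with $\theta^+_{K'}(\Sigma_{\out,t})=\theta^+(\Sigma_{\out,t})-\phi(t)<0$; sliding the leaf outward from $t=t_0$, where $\Sigma_{\out,t_0}=\Sigma^-$ sits strictly inside $\Sigma_\e$, up to the first $t^*$ at which $\Sigma_{\out,t^*}$ meets $\Sigma_\e$, the contact is a tangency with the leaf on the inside, whence $0=\theta^+_{K'}(\Sigma_\e)\le\theta^+_{K'}(\Sigma_{\out,t^*})<0$, a contradiction. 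So every component of $\Sigma_\e$ lies on $\Sigma_\out$ or strictly outside it; but a component strictly outside $\Sigma_\out$ is a MOTS for $K$ as well (as $K'=K$ there), so the region it encloses is weakly outer trapped for $K$ and therefore contained in $\Omega_\infty=\Omega_\out$, the union of all weakly outer trapped sets (cf.\ Remark~\ref{thmrmk} and \cite{AM09}) --- impossible, since that region's boundary leaves $\ov\Omega_\out$. Hence $\Sigma_\e=\Sigma_\out$ and $\Omega_\e=\Omega_0$, which finishes the proof. (When $\Sigma_\out$ is disconnected one runs the foliation and the leaf $\Sigma^-$ of \S\ref{outermostMOTSsection} near each component, and the argument is unchanged.)

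The main obstacle I expect is exactly this pinning of the blow-up locus to $\Sigma_\out$. The construction genuinely has to be carried out with the auxiliary data $K'$ --- the function $\hat u_\e$ only solves the $K'$-equation, and neither it nor the barrier is a sub- or supersolution of the $K$-equation --- so one cannot directly read off that $\hat u_\e$ blows up over \emph{the} outermost MOTS; one must squeeze $\Sigma_\e$ from below by $\Sigma^-$, via a careful application of the comparison principle on the overlap domain $\Omega_\e\cap U$, and from above by $\Sigma_\out$, via the foliation by strictly $K'$-trapped leaves and the outermost property, and only afterwards invoke $K'=K$ on $\Omega_0$. Some additional bookkeeping is needed with the orientation conventions entering the maximum-principle comparisons.
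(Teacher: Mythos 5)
Your proposal is correct and follows essentially the same route as the paper: apply Theorem~\ref{kto0} with the modified data $K'$, use the bent barrier of Lemma~\ref{barrier} and the comparison principle to force $\hat u_\e\ge 1/\delta$ inside $\Sigma^-$ (so the blow-up set lies outside $\Sigma^-$), then use the foliation by strictly $K'$-outer-trapped leaves together with the maximum principle and the outermost property to pin $\partial\Omega_\e\setminus\partial\Omega$ to $\Sigma_\out$, and finally invoke $K'=K$ on $\Omega_0$. Your extra steps (the explicit boundary analysis on $\Omega_\e\cap U$ and the explicit exclusion of components strictly outside $\Sigma_\out$) are elaborations of points the paper treats tersely, not a different argument.
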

\begin{Remark} The $\e_0$ of the theorem is given by $\e_0=\min\left\{\frac12, \frac{1}{(n+1)\l'}, \e_1\right\}$, where $\l'=\max_i\{|\l'_i|, \l_i\text{  eigenvalue of  } K'\}$ and $\e_1=\e_1(C_0,\t)$, $C_0$ and $\t$ are as in  Lemma~\ref{barrier}. The reason for the dependence of $\e_0$ on $K'$, instead of $K$, is that we want Theorem~\ref{kto0} to hold with $K$ replaced by the new data $K'$. 
\end{Remark}

\begin{proof}[Proof of Theorem~\ref{blowup}] 
We will make use of  the new data $K'$ as in  \eqref{K'}, which also satisfy \eqref{lK}. Let ($*_{\hat \e, \k}$)' and ($*_{\hat\e}$)' denote the equations ($*_{\hat \e, \k}$) and ($*_{\hat\e}$) after we have replaced $K$ by $K'$.
Note first that we can repeat the estimates of Sections \ref{ellregsection} and \ref{eexistencesection} with the new data $K'$ (in the place of $K$) and thus Theorem~\ref{kto0} holds with $K$ replaced by $K'$ and equations ($*_{\hat \e, \k}$) and ($*_{\hat\e}$) replaced by ($*_{\hat \e, \k}$)' and ($*_{\hat\e}$)'. Theorem~\ref{kto0} (i) then implies that for any $\e\le \min\left\{\frac12, \frac{1}{(n+1)\l'}\right\}$, where $\lambda'=\max_i\{|\l'_i|, \l_i\text{  eigenvalue of  } K'\}$, there exists an open and connected set $\Omega_\e\subset \Omega$ and a solution  $\hat u_\e\in C^\infty(\Omega_\e)$ of ($*_{\hat\e}$)', with $\hat u_\e=0$ on $\partial \Omega\subset \partial\Omega_\e$ and $\hat u_\e$ blowing up on the other boundary components. Furthermore, by  Theorem~\ref{kto0} (ii), we have that $\partial \Omega_\e\setminus\partial \Omega$ is a MOTS. 

Let now $\Sigma^-$,  $U$,  and $v$ be as in Lemma~\ref{barrier} for some $\delta>0$ . Then, for $\e\le \min\left\{\frac12, \frac{1}{(n+1)\l}, \e_1\right\}$ (where  $\e_1$ is as in  Lemma~\ref{barrier}), by the comparison principle we have that $v\le \hat u_\e$ over $U$ and thus $\frac{1}{\delta}\le \hat u_\e$ over $\Sigma^-$. Since this is true for any $\delta>0$ we obtain  that the MOTS $\partial\Omega_\e\setminus \Omega$ must lie outside $\Sigma^-$. Recall now that by the construction of the new data $K'$ (in the beginning of this section) the region between $\Sigma^-$ and the outermost MOTS, $\Sigma_\out$, is foliated by outer trapped hypersurfaces $\Sigma_{out,t}$. The maximum principle then implies that the MOTS $\partial\Omega_\e\setminus \Omega$ cannot enter the open region between $\Sigma^-$ and the outermost MOTS and therefore it must coincide with the outermost MOTS, $\Sigma_\out$.
\end{proof}

\section{The limit of solutions to $(*_{\varepsilon})$}
In Sections \ref{eexistencesection} and \ref{outermostMOTSsection} we established existence of solutions $u_{\varepsilon}=\varepsilon\hat{u}_{\varepsilon}$ to the null mean curvature flow elliptic regularization problem $(*_{\varepsilon})$ in ${\Omega}_0\subset {\Omega}$ for $\varepsilon\le \e_0$ (a constant that depends only on the initial data), where $\Omega_0$ is as in Theorem~\ref{blowup}, so that $\partial \Omega_0\setminus\partial \Omega=\Sigma_\infty$,  the outermost MOTS.
We want to send $\varepsilon\to0$ to obtain a weak solution to $(**)$. However, the interior and boundary gradient estimates for $(*_{\hat{\varepsilon},\kappa,s})$ derived in Lemmas \ref{interiorforkappa} and \ref{boundaryforkappa} both rely on the supremum estimate for $\hat{u}_{\varepsilon,\kappa}$. Since the supremum bound of Lemma~\ref{supest} blows up when we take the limit $\kappa\to0$, these a-priori estimates do not hold in the limit $\kappa\to0$, and thus they are of no use in extracting the limit for $\varepsilon\to0$ of the solution $u_{\varepsilon}$ to $(*_{\varepsilon})$. Therefore, we must derive new interior and boundary gradient estimates for $(*_{\varepsilon})$ that are uniform in $\varepsilon$.

\begin{Lemma}[Uniform Gradient Estimate]\label{unigradest} Let $\e\le \frac12$ and $u_\e\in C^\infty(\Omega_0)$ be a solution  of $(*_{\varepsilon})$  as  in  Theorem~\ref{blowup}. Then, $u_\e$  satisfies the estimate
\begin{equation*}
\sup_{\Omega_{T/2}}|\nabla u_{\varepsilon}|\leq \frac{2}{\eta T}\exp(\eta T)\cdot\sup_{\partial\Omega}(1+\sqrt{\varepsilon^2+|\nabla u_{\varepsilon}^2|}),
\end{equation*}
where $\eta$ is a constant that depends only on the initial data, in fact $\eta=\eta(n,\Ric,\|K\|_{C^1})$, and $\Omega_T=\{x\in\bar{\Omega}_0:u_{\varepsilon}(x)\leq T\}$.
\end{Lemma}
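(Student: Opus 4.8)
The plan is to run the same ``graph function'' argument that was used in the proof of the interior gradient estimate, Lemma~\ref{interiorforkappa}, but now applied to the \emph{translating} graph $\wt\Sigma_t^\e = \graph(\hat u_\e - t/\e)$, which solves the classical flow $(*)$ in $\Omega_0\times\R$ one dimension higher. The key new input, replacing the supremum bound on $\hat u_\e$ (which is no longer available once $\k\to 0$), is the \emph{a priori time bound}: by definition the level set $\Sigma_t = \{u_\e = t\}$ foliates $\Omega_{T/2}$ for $t\in[0,T/2]$, and we only need an estimate on the region $\Omega_{T/2}$, i.e. where $u_\e \le T/2$, equivalently where $\hat u_\e - t/\e$ ranges over an interval of length $T/(2\e)$. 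So the exponential weight $w = \exp(-\eta z)$ used in Lemma~\ref{interiorforkappa} contributes a factor controlled by $\exp(\eta\cdot T/\e)$ in terms of $\hat u_\e$, but in terms of the unscaled $u_\e = \e\hat u_\e$ this is $\exp(\eta T)$ — a bound uniform in $\e$. This is exactly the trade-off recorded in the statement.

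\textbf{Key steps.} First, I would recall that $U_\e(x,z) = u_\e(x) - \e z$ is the time-of-arrival function for the smooth flow $\wt\Sigma_t^\e$ in $\Omega_0\times\R$, so on each time-slice the hypersurface $\wt\Sigma_t^\e$ has null mean curvature $H + P$ equal to its normal speed, which is bounded below away from zero by the translating structure. Second, I would apply the Jacobi-type identity \eqref{jacobi} for the gradient function $v = \sqrt{1+|\nabla(\hat u_\e - t/\e)|^2} = \sqrt{1+|\nabla\hat u_\e|^2}$ on $N = \wt\Sigma_t^\e$, together with the computation of $\Delta^N(wv)$ in \eqref{jacobi2}, exactly as in Lemma~\ref{interiorforkappa}; the only difference is that the right-hand side equation is now $(*_{\hat\e})'$ (no $\k\hat u$ term), which if anything is simpler. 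Third, at an interior maximum $x_0$ of $wv$ on $\bar\Omega_{T/2}$ — and one must check the maximum is interior relative to the ``$z$-direction'' and then handle the lateral boundary $\partial\Omega$ by the term $\sup_{\partial\Omega}(1+\sqrt{\e^2+|\nabla u_\e|^2})$ — the same absorption of $\bar g(\nabla^N H,\tau)$ via \eqref{nablaHest} and choosing $\eta = \eta(n,\Ric,\|K\|_{C^1})$ large forces a contradiction unless $v(x_0)$ is bounded. Fourth, I would convert the resulting bound $\sup w v \le \max\{C_1, \text{(boundary term)}\}$ into the stated estimate: over $\Omega_{T/2}$ one has $u_\e \le T/2$, hence $z = (u_\e(x) - t)/\e$ satisfies $|z|\le T/(2\e)$ along the relevant slices, so $w \ge \exp(-\eta T/(2\e))$ there — wait, this is where the scaling must be done carefully: the correct bookkeeping is to weight by $\exp(-\eta u_\e) = \exp(-\e\eta\hat u_\e)$ rather than by $\exp(-\eta\hat u_\e)$, i.e. to absorb the factor $1/\e$ coming from the speed term $1/(\e v)$ into the definition of the weight, producing the clean $\exp(\eta T)$ and the prefactor $\frac{2}{\eta T}$ (which comes from estimating $\exp(-\eta u_\e)$ from below by a linear function on $[0,T/2]$, or equivalently from the explicit form of the barrier one builds).

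\textbf{Main obstacle.} The delicate point is the correct choice of weight and the exact powers of $\e$: naively reusing $w = \exp(-\eta z)$ from Lemma~\ref{interiorforkappa} gives a bound blowing up like $\exp(\eta/\e)$, so one must instead either weight by a function of $u_\e$ directly (not of $z$) or rescale before applying the maximum principle, and then track how the $\frac{1}{\e v}$ speed term interacts with $\nabla^N(wv)$ — this is precisely the term that, after the rescaling, no longer carries a bad $1/\e$ but instead a harmless $\eta$. The other bookkeeping nuisance is confirming that the maximum of the weighted gradient over $\bar\Omega_{T/2}$ is attained either in the interior (contradiction) or on $\partial\Omega$ (the term already in the statement) and not on the ``upper'' face $\{u_\e = T/2\}$; this is handled by noting that $wv$ restricted to that face can be controlled by the interior argument applied on a slightly larger region, or by the translating-graph structure which makes the estimate translation-invariant in $z$. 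Once these scalings are pinned down, the rest is the routine Cauchy--Schwarz and completing-the-square computation already carried out in Lemma~\ref{interiorforkappa}.
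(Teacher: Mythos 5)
Your overall strategy is the paper's: work on $N=\graph\hat u_\e$, use the Jacobi identity \eqref{jacobi} and the computation of $\Delta^N(\text{weight}\cdot v)$ as in Lemma~\ref{interiorforkappa}, weight the gradient function $v=\sqrt{1+|\nabla\hat u_\e|^2}$ by an exponential of $u_\e$ rather than of $\hat u_\e$ (i.e.\ $w(x,z)=\exp(-\e\eta z)$, which on the graph equals $\exp(-\eta u_\e)$), use $H+P=\frac{1}{\e v}$ and $\e\le\frac12$ to get $1-\frac1{v^2}>\frac12$ at a putative large interior maximum, and choose $\eta=\eta(n,\Ric,\|K\|_{C^1})$ large to force a contradiction. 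You correctly identified the decisive rescaling of the weight, which is indeed the main new scaling point compared with Lemma~\ref{interiorforkappa}.

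However, there is a genuine gap at exactly the point you dismiss as a ``bookkeeping nuisance'': excluding a maximum of the weighted quantity on the inner level set $\{u_\e=T\}$ (or $\{u_\e=T/2\}$). The paper's device is to replace $w$ by $w-w_0$ with $w_0=\exp(-\eta T)$, so that $(w-w_0)\,\e v$ vanishes on $\{u_\e=T\}$; the contradiction hypothesis \eqref{contradiction} (value $>\max\{C_1,1\}$) then forces the maximum over $\Omega_T$ to be interior, and the stated estimate on $\Omega_{T/2}$ follows from $\exp(-\eta u_\e)-\exp(-\eta T)\ge\frac{\eta T}{2}\exp(-\eta T)$ there --- this subtraction, not a linear lower bound on $\exp(-\eta u_\e)$ alone, is where the prefactor $\frac{2}{\eta T}e^{\eta T}$ in the statement comes from. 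Neither of your proposed fixes closes the gap: applying the argument ``on a slightly larger region'' merely relocates the problem to that region's upper face (and you cannot push out to all of $\Omega_0$, since $u_\e$ and $|\nabla\hat u_\e|$ both blow up at $\Sigma_\out$ and the product $\exp(-\eta u_\e)v$ is not a priori controlled there --- there is no supremum bound available, which is the whole reason Lemma~\ref{interiorforkappa} cannot be reused), while ``translation-invariance in $z$'' fails because the exponential weight itself breaks translation invariance. Note finally that the subtraction of $w_0$ is not cosmetic in the maximum-principle computation: it produces the extra terms $w_0v\bigl((\e\eta)^2(1-\frac{1}{v^2})+\e\eta\frac{H}{v}\bigr)$ in $\Delta^N((w-w_0)v)$, which must be shown (using the equation again) to have a favorable sign; your sketch omits this step.
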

\begin{proof} We take a similar approach to that of the proof of Lemma~\ref{interiorforkappa}. Let $N=\graph \hat u_\e$ and $v=\sqrt{1+|\nabla \hat u_\e|^2}$, where recall that $\hat u_\e=\e u_\e$ is a solution of $(*_{\hat{\varepsilon}})$. Let also $w(x,z):=\exp(-\varepsilon\eta z)$ for $(x,z)\in M\times\R$ and $w_0=\exp(-\eta T)$, so that $w-w_0=0$ when $z=\e^{-1}T$. We compute $\Delta^N((w-w_0)v)$ on $N$, similarly to \eqref{jacobi2} in the proof of Lemma  \ref{interiorforkappa}, as follows. We first note that
\[
\nabla^N w=-\e\eta w\left(\tau-\frac1v\nu\right)\,,\,\,\,\Delta^N w=\e^2\eta^2\left(1-\frac{1}{v^2}\right)w+\e\eta\frac{H}{v}w,
\]
where the notation here and throughout this proof is as in the proof of Lemma  \ref{interiorforkappa}. Thus (using \eqref{jacobi} from the proof of Lemma  \ref{interiorforkappa}) we obtain
\begin{equation}\label{newJacobiEqn}
\begin{split}
\Delta^N((w-w_0)v)=&\frac{2}{v}\bar g(\nabla^Nv,\nabla^N((w-w_0)v))
\\
&+(w-w_0)v\Bigg(|A|^2+\left(1-\frac{1}{v^2}\right)\Ric(\gamma,\gamma)\\
&+(\varepsilon\eta)^2\left(1-\frac{1}{v^2}\right)+\varepsilon\eta\frac{H}{v}-v\bar g(\nabla^NH,\tau)\Bigg)\\
&+w_0v\left((\varepsilon\eta)^2\left(1-\frac{1}{v^2}\right)+\varepsilon\eta\frac{H}{v}\right).
\end{split}
\end{equation}
To argue by contradiction, define $C_1:=\sup_{\partial\Omega}\varepsilon\sqrt{1+|\nabla \hat{u}_\e|^2}$ and assume 
\begin{equation}\label{contradiction}
\sup_{\Omega_T}((\exp(-\eta \varepsilon\hat{u})-\exp(-\eta T))\varepsilon\sqrt{1+|\nabla \hat{u}_\e|^2})>\max\{C_1,1\},
\end{equation}
which must be attained at an interior point $x_0$. Since $N=\graph\hat{u}_\e$, equation $(*_{\hat{\varepsilon}})$ implies that 
\begin{equation}\label{floweqn2}
H+P=\dfrac{1}{\varepsilon v},
\end{equation}
where $H+P$ is the null mean curvature of $N$.
Proceeding as in Lemma~\ref{interiorforkappa}, analogous to (\ref{nablaHest}) (using also \eqref{MNest} and the expression for $|\nabla^Nw|$), we obtain the following estimate
\begin{align*}
(w-w_0)v^2\bar g(\nabla^NH,\tau)=&-\frac{1}{\varepsilon}\bar g(\nabla^N((w-w_0)v),\tau)\\
&-{\eta} wv\left(1-\frac{1}{v^2}\right)-(w-w_0)v^2g(\nabla^NP,\tau)\\
\leq&-\frac{1}{\varepsilon}\bar g(\nabla^N((w-w_0)v),\tau)\\
&-(w-w_0)v\left({\eta}\left(1-\frac{1}{v^2}\right)-2c^2-\frac{|A|^2}{2}\right)\\
&-w_0v{\eta}\left(1-\frac{1}{v^2}\right),
\end{align*}
where $c=c(n, \|K\|_{C^1})$ is the constant from \eqref{MNest}.
At a maximum point $x_0$, where $\Delta((w-w_0)v)\leq0$ and $\nabla((w-w_0)v)=0$, \eqref{newJacobiEqn} reduces to
\begin{align*}
0\geq (w-w_0)v\Bigg(&|A|^2+\left(1-\frac{1}{v^2}\right)\Ric(\gamma,\gamma)\\
&+(\varepsilon\eta)^2\left(1-\frac{1}{v^2}\right)+\varepsilon\eta\frac{H}{v}-v\bar g(\nabla^NH,\tau)\Bigg)\\
&+w_0v\left((\varepsilon\eta)^2\left(1-\frac{1}{v^2}\right)+\varepsilon\eta\frac{H}{v}\right).
\end{align*}
After implementing the above estimates and also using \eqref{floweqn2}, this becomes
\begin{align*}
0\geq(w-w_0)v\Bigg(\frac{|A|^2}{2}&+\left(1-\frac{1}{v^2}\right)\Bigl(\Ric(\gamma,\gamma)+\eta+(\varepsilon\eta)^2\Bigr)\\
&\hspace{3cm}+\frac{\eta}{v^2}-\varepsilon\eta\frac{\|K\|_{C^0}}{v}-2c^2\Bigg)\\
&+w_0v\left((\eta+(\varepsilon\eta)^2)\left(1-\frac{1}{v^2}\right)+\frac{\eta}{v^2}-\varepsilon\eta\frac{\|K\|_{C^0}}{v}\right).
\end{align*}
By the contradiction hypothesis (\ref{contradiction}), we find that $v(x_0)>\frac{1}{\varepsilon}$ and thus $(1-\frac{1}{v^2})>\frac{1}{2}$, provided that $\varepsilon\leq\frac{1}{2}$. Therefore, after discarding some  positive terms from the right-hand side, we obtain
\begin{equation*}
\begin{split}
0&\geq (w-w_0)v\Bigg(\left(1-\frac{1}{v^2}\right)\left({\eta}+\Ric(\gamma,\gamma)-4c^2\right)+\varepsilon^2\eta\left(\frac{\eta}{2}-\|K\|_{C^0}\right)\Bigg)\\
&\quad+w_0\varepsilon^2\eta\left(\frac{\eta}{2}-\|K\|_{C^0}\right),
\end{split}
\end{equation*}
where the constant $c=c(n, \|K\|_{C^1})$ is the constant from \eqref{MNest}. For $\eta=\eta(n,\Ric,\|K\|_{C^1})$ large enough the right-hand side of the above expression becomes  strictly positive, leading to a contradiction. In other words, \eqref{contradiction} cannot be true and therefore we have
\[\sup_{\Omega_T}((\exp(-\eta \varepsilon\hat{u}_\e)-\exp(-\eta T))\varepsilon\sqrt{1+|\nabla \hat{u}_\e|^2})\le\max\{C_1,1\}.\]
For $u_\e=\e\hat u_\e$ we then have
\[\begin{split}\sup_{\Omega_T}((\exp(-\eta{u}_\e)-\exp(-\eta T))\sqrt{\e^2+|\nabla {u}_\e|^2})&\le\max\{C_1,1\}\\
&\le \sup_{\partial\Omega}(1+\sqrt{\e^2+|\nabla {u}_\e|^2}).
\end{split}\]
Restricting now to the region $\Omega_{T/2}$, where 
\[\exp(-\eta{u}_\e)-\exp(-\eta T)\ge \exp(-\eta T/2)-\exp(-\eta T)\ge \frac{\eta T}{2}\exp(-\eta T),\]
we obtain  the required estimate.
\end{proof}

\begin{Lemma}[Uniform boundary gradient estimate]\label{bdrygradest}
There exist constants $C$ and $\e_0$,  depending only on the initial data,  such that for any $\e\le\e_0$ and any solution $u_\e\in C^\infty(\Omega_0)$ of $(*_{\varepsilon})$, as  in  Theorem~\ref{blowup}, the following estimate holds
\[\sup_{\partial\Omega}|\nabla u_{\varepsilon}|\le C.\]
\end{Lemma}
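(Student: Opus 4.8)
The plan is to obtain a uniform boundary gradient estimate for $(*_\e)$ by the same Serrin-type barrier construction used in Lemma~\ref{boundaryforkappa}, but now keeping track of the fact that we no longer have the supremum bound of Lemma~\ref{supest} (which blew up as $\kappa\to 0$). The key observation is that, for the non-capillarity equation $(*_\e)$, no zero-order term $\kappa u_\e$ is present, so the relevant structure conditions of \cite[Theorem 14.6]{GT01} do not require a height bound at all. Concretely, I would multiply $(*_\e)$ through by $v^3=(\e^2+|\nabla u_\e|^2)^{3/2}$ (equivalently work with $\hat u_\e$ and $v^3=(1+|\nabla\hat u_\e|^2)^{3/2}$) and decompose the resulting quasilinear operator, exactly as in \eqref{levelset}, into the principal part $a^{ij}_\infty$, the lower-order curvature coefficient $b_\infty = -K_{ij}(g^{ij}-p^ip^j/|p|^2)$, and the correction terms $a^{ij}_0, b_0$ coming from passing to the geometry of the level sets of $u_\e$.

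The main points to check are then: (1) $a^{ij}_0 = o(\Lambda)$ and $b_0 = o(|p|\Lambda)$ as $|p|\to\infty$, which holds just as before since the only genuinely new term relative to Lemma~\ref{boundaryforkappa} is the $\tfrac{1}{\e}\Lambda$ contribution (replacing $\tfrac s\e\Lambda$), and $\Lambda = o(|p|\Lambda)$; and (2) the boundary curvature condition (14.51) of \cite{GT01}, namely $H_{\partial\Omega} - b_\infty|_{\partial\Omega} = H_{\partial\Omega} + P_{\partial\Omega} = \theta^+_{\partial\Omega} > 0$, which is precisely the hypothesis that $\partial\Omega$ is strictly mean convex and outer untrapped. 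With these two conditions verified, \cite[Theorem 14.6]{GT01} produces an upper barrier at each boundary point whose gradient bound depends only on the geometry of $\partial\Omega$, on $\|K\|_{C^0}$, on $\theta^+_{\partial\Omega}$, and on $\e$ through the coefficient $\tfrac 1\e$ in $b_0$; since $\e\le\e_0$ is bounded below in absolute value only from above, I should instead exploit that $\tfrac1\e$ appears with a \emph{favorable} sign (it makes the operator push the graph down, which only helps the upper barrier), so that the barrier construction actually yields a bound \emph{independent} of $\e$ for $\e$ small. Combined with the lower barrier $u_\e\ge 0$ (recall $u_\e|_{\partial\Omega}=0$ and $u_\e\ge 0$ in $\Omega_0$), this gives the two-sided boundary gradient bound $\sup_{\partial\Omega}|\nabla u_\e|\le C$ with $C$ depending only on the initial data.

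The step I expect to be the main obstacle is arguing carefully that the barrier --- and hence the constant $C$ --- can be taken uniform in $\e$ as $\e\downarrow 0$. In Lemma~\ref{boundaryforkappa} the constant was allowed to depend on $\e$ and on the (now unavailable) height bound, so I must re-examine Serrin's construction: the barrier is typically of the form $w = \varphi(d)$ for $d = \dist(\cdot,\partial\Omega)$ with $\varphi(t) = \tfrac1\mu\log(1+\nu t)$, and one needs the coefficients to satisfy $\mathcal Q(w)\le 0$ near $\partial\Omega$ for $\mu$ large depending on the structure constants. The $\tfrac1\e$-term enters the analogue of $b_0$ as $+\tfrac1\e\Lambda$, which in the relevant inequality has the correct sign (it makes $\mathcal Q(w)$ \emph{more} negative, i.e. reinforces the supersolution property), so no smallness of $\e$ is needed and $\e_0$ can be taken to be the same threshold as in Theorem~\ref{blowup}; I would just need to confirm this sign bookkeeping and that the geometric quantities of $\partial\Omega$ (principal curvatures, a tubular neighborhood width) are fixed independently of $\e$. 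Once the $\e$-uniformity is pinned down, the rest is a direct invocation of \cite[Theorem 14.6]{GT01} together with the comparison principle, exactly as in Lemma~\ref{boundaryforkappa}.
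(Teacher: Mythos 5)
There is a genuine gap. Your plan rests on the claim that, because $(*_\e)$ has no zero order term, the Serrin construction of \cite[Theorem 14.6]{GT01} ``does not require a height bound at all''. That is not so: the boundary gradient estimate of \cite[Theorem 14.6]{GT01} is obtained by building an upper barrier $w=\varphi(d)$ on a collar $\{0<d(\cdot,\partial\Omega)<a\}$, and the comparison argument needs $w\ge u$ on the inner edge $\{d=a\}$ of the collar; hence the constant always depends on $\sup|u|$ (this is already the case for the minimal surface equation). In the present situation this is precisely what is missing: the functions $u_\e$ from Theorem~\ref{blowup} are \emph{unbounded} on $\Omega_0$ (they blow up at $\Sigma_\out$), and at this stage of the argument there is no bound for $u_\e$ on $\{d=a\}$ that is uniform in $\e$ --- obtaining such a local bound near $\partial\Omega$ is exactly what the boundary barrier is supposed to deliver, so invoking \cite[Theorem 14.6]{GT01} as in Lemma~\ref{boundaryforkappa} (whose constant explicitly uses the supremum bound of Lemma~\ref{supest}, lost in the limit $\kappa\to0$) is circular. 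Your sign bookkeeping for the forcing term is also backwards: after multiplying through, the right-hand side of $(*_\e)$ contributes $+1$ (respectively $+\tfrac1\e$ in the $\hat u_\e$ scaling) to the operator applied to a candidate upper barrier, and this term works \emph{against} the supersolution inequality; it must be absorbed by making the barrier's radial derivative large (in the $\hat u_\e$ scaling this would force a bound of order $\tfrac1\e$, which is consistent with, but does not by itself give, an $\e$-independent bound for $|\nabla u_\e|=\e|\nabla\hat u_\e|$).

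The paper circumvents both issues by a different barrier: it takes the short-time smooth solution of $(*)$ starting from $\partial\Omega$, lets $u$ be its arrival-time function on the collar $\Omega_\tau$ (so $\mathcal M(u)=0$ with $C_0^{-1}\le|\nabla u|\le C_0$, $|\nabla^2u|\le C_0$), and bends it by $\psi(t)=2t+\tfrac{1}{\tau-t}-\tfrac1\tau$. The resulting $v=\psi\circ u$ is a supersolution of $(*_\e)$ for all $\e\le\e_0=\min\{(4(n+1)\lambda)^{-1},C_0^{-2}\}$ (the smallness of $\e$ is used only to control the discrepancy between the level-set operator $(**)$ and its $\e$-regularization, and $\psi'\ge2$ absorbs the $+1$ forcing), it vanishes on $\partial\Omega$, and --- this is the key point your proposal lacks --- it tends to $+\infty$ on the inner boundary $\Sigma_\tau$ of the collar, so the comparison $u_\e\le v$ holds with no supremum information on $u_\e$ whatsoever. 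The boundary gradient bound $C=(2+\tau^{-2})C_0$ then depends only on the initial data. If you want to salvage a Serrin-type construction, you would at least have to replace the bounded logarithmic profile by one blowing up at the inner edge of the collar and verify the supersolution inequality there uniformly in $\e$; as written, the proposal does not close.
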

\begin{proof} The idea of the proof is to create an upper barrier for the functions $u_\e$ at the boundary $\partial\Omega$, by bending the (short time) smooth solution of $(*)$ with initial data $\partial \Omega$. This construction is similar to that in \cite[Lemma 4.2]{S08} with the extra complication that here we do not have a supremum estimate for the solutions $u_\e$ (a construction of a barrier using the smooth solution  was also used  in Lemma~\ref{barrier}). 

Let $F(\cdot,t):\partial\Omega\times[0,T)\to M$ be the unique solution to $(*)$, with initial condition $F(\cdot, 0)=\Id_{\partial\Omega\to\partial\Omega}$ and let $\Sigma_t=F(\partial\Omega, t)$ (see Remark~\ref{SUrmk} for the existence of $F$). Since the null mean curvature of the hypersurfaces remains positive (see Remark~\ref{SUrmk}), we obtain that $\Sigma _{t_1}\cap \Sigma_{t_2}=\emptyset$ for $t_1\ne t_2$. For any $\t\in (0, T)$ we define
\[
\Omega_\t=\bigcup_{0<t<\t}\Sigma_t\subset \Omega,
\]
and let $u:\Omega_\t\to \R^+$ be defined by $u(p)=t\Leftrightarrow p\in \Sigma_t$. Then, we have that $u$ is a smooth solution of $(**)$ over $\ov\Omega_\t$ and furthermore there exists some constant $C_0>1$ such that
\begin{equation}\label{Du}
\frac{1}{C_0}\le |\nabla u|\le C_0\,\,,\,\,|\nabla^2 u|\le C_0\text{    in   }\ov \Omega_\t.
\end{equation}

We choose $0<\t<T$ such that $\t<\frac{1}{2}$ and bend the smooth solution $u$ of $(**)$ to construct a supersolution of $(*_{\varepsilon})$ that is zero on $\partial\Omega$ and goes to infinity on the inner boundary $\Sigma_{\tau}$ of $\Omega_{\tau}$. To this end, we define $\psi:[0, \tau)\to \R^+$ to be the following smooth increasing function
\begin{equation}\label{psidelta}
\psi(t)=2t+\frac{1}{\t-t}-\frac{1}{\t}.
\end{equation}
Then $\psi(0)=0$, $\lim_{t\to\t}\psi(t)=+\infty$ and furthermore we have
\begin{equation}\label{Dpsidelta}
\psi'(t)=2+\frac{1}{(\t-t)^2}\,\,,\quad \psi''(t)=\frac{2}{(\t-t)^3}.
\end{equation}
We will show that the function 
\[v(x)=\psi(u(x))\]
is a super solution of $(*_{\varepsilon})$ in $\Omega_\tau$ for sufficiently small  $\e$. Since $u_{\varepsilon}$ solves $(*_{\varepsilon})$ with $u_\e=0$ on $\partial \Omega$, this would then imply that $u_{\varepsilon}\leq v$ on $\bar{\Omega}_{\tau}$ and
\[ \sup_{\partial \Omega}|\nabla u_\e|\le \sup_{\partial \Omega}|\nabla v|\le \left(2+\frac{1}{\t^2}\right) C_0,\]
which proves the lemma with $C=(2+\tau^{-2})C_0$.
Hence, it suffices to show that there exists $\e_0$, depending only on the initial data, such that $v$ is a super solution of $(*_{\varepsilon})$ for all $\e\le \e_0$. 
We first compute, similar to \eqref{MvMueqn} of the proof of Lemma~\ref{barrier},
\[\begin{split}\mathcal M(v):&= \frac{1}{|\nabla v|}\left(g^{ij}-\frac{\nabla^iv\nabla^jv}{|\nabla v|^2}\right)\nabla_{ij}v-\left(g^{ij}-\dfrac{\nabla^iv \nabla^j v}{|\nabla v|^2}\right)K'_{ij}+\frac{1}{|\nabla v|}\\
&=\mathcal M(u)-\frac{1}{|\nabla u|}+\frac{1}{|\nabla v|}=\frac{1-\psi'}{|\nabla v|}.
\end{split}\]
Hence, $v$ is a super solution of $(**)$ if $\psi'\ge 1$. We now relate the level set equation $(**)$ to the elliptic regularized problem $(*_{\varepsilon})$ as follows (cf. \eqref{Mk} in the proof of Lemma~\ref{barrier})
\begin{equation}\label{Mev}
\begin{split}
\mathcal{M}_{\varepsilon}(v):=&\left(g^{ij}-\frac{\nabla^i v\nabla^j v}{|\nabla v|^2+\e^2}\right)\nabla_{ij}v-\left(g^{ij}- \frac{\nabla^i v\nabla^j v}{|\nabla v|^2+\e^2}\right) K_{ij}\sqrt{|\nabla v|^2+\e^2}\\
&+1\\
=&|\nabla v|\mathcal M(v)+ \left(\frac{\nabla^i v\nabla^j v}{|\nabla v|^2}-\frac{\nabla^i v\nabla^j v}{|\nabla v|^2+\e^2}\right)\nabla_{ij}v \\
+&\left(g^{ij}\frac{-\e^2}{|\nabla v|+\sqrt{|\nabla v|^2+\e^2}} +\nabla^iv\nabla^jv\left(\frac{1}{\sqrt{|\nabla v|^2+\e^2}}-\frac{1}{|\nabla v|} \right)\right)K_{ij}
\end{split}\end{equation}
Next, we want to bound the last two terms on the right hand side of \eqref{Mev}. The first of these terms is estimated as follows.
\begin{equation*}\begin{split}
\frac{\e^2\nabla^iv\nabla^jv}{|\nabla v|^2(|\nabla v|^2+\e^2)}\nabla_{ij}v&\le\frac{\e^2}{(\psi')^2|\nabla u|^2+\e^2}(\psi'|\nabla^2u|+\psi''|\nabla u|^2)\\
&\le\e^2\frac{\psi''}{(\psi')^2}+C_0\frac{\e^2\psi'}{(\psi')^2 C_0^{-2}}\le \e^2\frac{\psi''}{(\psi')^2}+C_0^3\frac{\e^2}{\psi'},
\end{split}
\end{equation*}
where we have used (\ref{Du}). Our choice of $\psi$, see (\ref{psidelta}), together with the fact that $\tau<\frac{1}{2}$,  implies
\begin{equation}\label{psi'psi''}
\frac{\psi''}{(\psi')^2}\le2(\t-t)\le2\t\le 1\quad\quad\text{and }\quad\quad\frac{1}{\psi'}\le (\t-t)^2\le1.
\end{equation}
Considering now $\e$ such that $\e\le  C_0^{-2}$, we  obtain the  bound
\[\left(\frac{\nabla^i v\nabla^j v}{|\nabla v|^2}-\frac{\nabla^i v\nabla^j v}{|\nabla v|^2+\e^2}\right)\nabla_{ij}v
\le\frac{1}{C_0^4}+ \frac{1}{C_0}\le \frac{1}{2}.\]
We now bound the second term on the right-hand side of \eqref{Mev} (using again \eqref{psi'psi''})
\[\begin{split}
 &\Biggl(g^{ij}\frac{-\e^2}{|\nabla u|\psi'+\sqrt{(\psi')^2|\nabla u|^2+\e^2}} \\
 &\quad+(\psi')^2\nabla^iu\nabla^ju\left(\frac{1}{\sqrt{(\psi')^2|\nabla u|^2+\e^2}}-\frac{1}{\psi'|\nabla u|} \right)\Biggr)K_{ij}\\
 &\le (n+1)\lambda \left(\e+ |\nabla u|^2(\psi')^2 \frac{\sqrt{(\psi')^2|\nabla u|^2+\e^2}-\psi'|\nabla u|}{\psi'|\nabla u|\sqrt{(\psi')^2|\nabla u|^2+\e^2}}\right)\\
 &= (n+1)\lambda \left(\e+ |\nabla u|\psi' \frac{\e^2}{\sqrt{(\psi')^2|\nabla u|^2+\e^2}(\sqrt{(\psi')^2|\nabla u|^2+\e^2}+\psi'|\nabla u|)}\right)\\
 &\le (n+1)\lambda \left(\e+ \frac{\e^2}{\sqrt{(\psi')^2|\nabla u|^2+\e^2}}\right)\le 2\e(n+1)\lambda \le \frac1 2,
\end{split}\]
with the last inequality being true provided that $\e\le (4(n+1)\lambda)^{-1}$ and  where recall that $\lambda=\max_i\{|\l_i|, \l_i\text{  eigenvalue of  } K\}$. Putting everything together and using these estimates back in \eqref{Mev},  we find that for $\e\le \e_0$, where $\e_0=\min\{(4(n+1)\lambda)^{-1}, C_0^{-2}\}$, we obtain the estimate
\[\mathcal M_\e(v) \le 1-\psi' +{1},\]
which due to \eqref{Dpsidelta} implies
\[
\mathcal M_\e(v) \le 1-\left(2+\frac{1}{(\t-t)^2}\right) +{1}\le-\frac{1}{(\t-t)^2}<0,
\]
so that $v$ is a super solution and thus $u_\e\le v$ for all $\e\le \e_0$.
\end{proof}

We now return to the original elliptic regularization problem $(*_{\varepsilon})$, and note that the a-priori estimates for $u_{\varepsilon}$ given in Lemmas \ref{unigradest} and \ref{bdrygradest}  are uniform in $\varepsilon$. We can therefore use the Arzela--Ascoli theorem to extract a limit as $\e\to 0$. In particular, there exists $u\in C^{0,1}(\Omega_1\cup\partial\Omega)$ and a sequence $\e_k\downarrow 0$ such that
\begin{equation}\label{uetou}
u_{\varepsilon_k}\to u \text{  in $C^0({\Omega_1}\cup\partial\Omega)$},
\end{equation}
 where $\Omega_1\stackrel{open}{\subset}\Omega_0$ is such that $\partial\Omega_1\supset\partial\Omega$. In particular, with $\Omega_2=\cap_t\cup _k\{u_{\e_k}>t\}$, we have $\Omega_1=\Omega_0\setminus\ov\Omega_2$ and thus Lemma~\ref{bdrygradest} implies that $\Omega_1\ne\emptyset$.  Furthermore, since the functions $u_{\e_k}$ tend to $+\infty$ on approach to $\partial\Omega_0\setminus\partial\Omega$, the limit function $u$ also tends to $+\infty$ on approach to  $\partial\Omega_1\setminus\partial\Omega$. With the convergence  `in $C^0({\Omega_1}\cup\partial\Omega)$' above we mean that $u_{\varepsilon_k}\to u$ uniformly in any compact subset of ${\Omega_1}\cup\partial\Omega$. Similarly with `$u\in C^{0,1}(\Omega_1\cup\partial\Omega)$' we mean that $u$ is Lipschitz in any compact subset of ${\Omega_1}\cup\partial\Omega$. Furthermore, Lemma~\ref{unigradest}, along with the Banach--Alaoglu theorem,  implies that 
\begin{equation}\label{Duetou}
\int_{\Omega_1}\nabla u_{\e_k}\cdot fd\H^n\to \int_{\Omega_1}\nabla u\cdot fd\H^n\,,\,\,\forall f\in L^1_c(\Omega_1\cup\partial\Omega;\R^{n+1}),
\end{equation}
where $L^1_c(\Omega_1\cup\partial\Omega;\R^{n+1})$ denotes all the functions in $L^1(\Omega_1;\R^{n+1})$ with support in a compact subset of $\Omega_1\cup\partial\Omega$.

\begin{Definition}\label{weak} A function $u\in C^{0,1}(\Omega_1)$ defined as the limit of a sequence $\{u_{\varepsilon_k}\}$ of solutions to $(*_{\varepsilon_k})$, with $\e_k\downarrow 0$, as in \eqref{uetou} will be called a weak  solution of $(**)$. 
\end{Definition}
We have therefore established the following.
\begin{Theorem}\label{weak existence}
There exists $u\in C^{0,1}(\Omega_1\cup\partial\Omega)$ a weak solution of $(**)$, as in Definition~\ref{weak}, where $\Omega_1\stackrel{open}{\subset} (\Omega\setminus\ov\Omega_\out)$ and $\partial\Omega_1\supset\partial\Omega$ (recall that $\Omega_\out$ is such that $\partial\Omega_\out=\Sigma_\out$, the outermost MOTS). Furthermore, any weak solution satisfies $u|_{\partial\Omega}=0$ and $\lim_{x\to x_0} u(x)=+\infty$ for any $x_0\in\partial \Omega_1\setminus\partial\Omega$.
\end{Theorem}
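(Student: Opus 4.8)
The plan is to pass to the limit $\e\to 0$ in the family of smooth solutions $u_\e=\e\hat u_\e$ of $(*_\e)$ produced by Theorem~\ref{blowup}, which are defined on $\Omega_0=\Omega\setminus\ov\Omega_\out$, vanish on $\partial\Omega$, and blow up over the outermost MOTS $\Sigma_\out$. The two ingredients that make this possible are the uniform boundary gradient estimate of Lemma~\ref{bdrygradest} and the uniform interior gradient estimate of Lemma~\ref{unigradest}: together they control $|\nabla u_\e|$ on each sublevel region $\{u_\e\le T/2\}$ by a constant independent of $\e\le\e_0$.

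First I would localize near $\partial\Omega$. The barrier $v=\psi\circ u$ constructed in the proof of Lemma~\ref{bdrygradest} satisfies $u_\e\le v$ on $\ov\Omega_\tau$ for all $\e\le\e_0$, and $v$ is finite on compact subsets of $\Omega_\tau$ (it diverges only on the inner face $\Sigma_\tau$). Hence $\sup_K u_\e$ is bounded uniformly in $\e$ for every compact $K\subset\Omega_\tau\cup\partial\Omega$. Combined with Lemma~\ref{unigradest} this yields uniform $C^{0,1}$ bounds for $u_\e$ on such $K$; and since $(*_\e)$ is uniformly elliptic wherever $|\nabla u_\e|$ is bounded and its coefficients are expressed through $\nabla u_\e$, standard interior Schauder and De Giorgi--Nash--Moser estimates (cf.\ \cite{GT01}) upgrade these to uniform $C^k$ bounds on compact subsets. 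In particular the open set $\Omega_1:=\Omega_0\setminus\ov\Omega_2$, with $\Omega_2=\bigcap_{t>0}\bigcup_k\{u_{\e_k}>t\}$, contains $\Omega_\tau$, hence is nonempty, and $\partial\Omega\subset\partial\Omega_1$; also $\Omega_1\subset\Omega_0=\Omega\setminus\ov\Omega_\out$ by construction.

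Next, by a diagonal argument over an exhaustion of $\Omega_1\cup\partial\Omega$ by compact sets together with Arzela--Ascoli, I would extract a subsequence $\e_k\downarrow 0$ with $u_{\e_k}\to u$ uniformly on every compact subset of $\Omega_1\cup\partial\Omega$, where $u$ is Lipschitz on every such subset, i.e.\ $u\in C^{0,1}(\Omega_1\cup\partial\Omega)$. The Banach--Alaoglu theorem applied to the uniformly bounded gradients gives $\nabla u_{\e_k}\rightharpoonup\nabla u$ tested against $L^1_c(\Omega_1\cup\partial\Omega;\R^{n+1})$, as in \eqref{Duetou}. The boundary condition $u|_{\partial\Omega}=0$ is inherited directly from $u_{\e_k}|_{\partial\Omega}=0$. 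Finally, Theorem~\ref{blowup} (through Theorem~\ref{kto0}) gives that $u_{\e_k}\to+\infty$ locally uniformly on approach to $\partial\Omega_0\setminus\partial\Omega=\Sigma_\out$; for any $x_0\in\partial\Omega_1\setminus\partial\Omega$ we have $x_0\notin\Omega_1$, so either $x_0\in\Sigma_\out$, where the above blow-up applies directly, or $x_0\in\ov\Omega_2\cap\Omega_0$, where the definition of $\Omega_2$ forces $u_{\e_k}$ to exceed every level on points converging to $x_0$; in both cases the uniform gradient bound propagates this to $\lim_{x\to x_0}u(x)=+\infty$. By Definition~\ref{weak}, $u$ is a weak solution of $(**)$, which is the assertion.

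The main obstacle is the bookkeeping of the second and third paragraphs: the interior estimate of Lemma~\ref{unigradest} only controls $|\nabla u_\e|$ on the \emph{$\e$-dependent} sublevel sets $\{u_\e\le T/2\}$, so to obtain a genuinely $\e$-independent region of uniform $C^{0,1}$ control -- and thereby to identify the limiting domain $\Omega_1$ with the right boundary structure $\partial\Omega_1\supset\partial\Omega$ -- one must first pin down, via the boundary barrier, a fixed neighbourhood of $\partial\Omega$ on which the $u_\e$ are uniformly bounded, and then bootstrap outward one sublevel set at a time, checking along the way that no mass escapes and that the blow-up set of the $u_{\e_k}$ is exactly $\partial\Omega_1\setminus\partial\Omega$.
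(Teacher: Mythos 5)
Your proposal follows essentially the same route as the paper: there the uniform interior and boundary gradient estimates (Lemmas~\ref{unigradest} and \ref{bdrygradest}) are likewise combined with Arzela--Ascoli and Banach--Alaoglu, the sets $\Omega_2=\cap_t\cup_k\{u_{\e_k}>t\}$ and $\Omega_1=\Omega_0\setminus\ov\Omega_2$ are defined exactly as you do, the nonemptiness of $\Omega_1$ and $\partial\Omega\subset\partial\Omega_1$ are deduced from the boundary estimate/barrier, and the blow-up of $u$ on $\partial\Omega_1\setminus\partial\Omega$ is inherited from the $u_{\e_k}$, so the theorem then follows directly from Definition~\ref{weak}. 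The one caveat is your aside upgrading to uniform $C^k$ bounds via Schauder and De Giorgi--Nash--Moser: this is neither needed (only $C^{0,1}$ is claimed) nor justified uniformly in $\e$, since the ellipticity of $(*_{\e})$ degenerates as $\e\to0$ wherever $|\nabla u_{\e}|$ is small, which is consistent with the limit being merely Lipschitz.
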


\section{Properties of weak solutions}\label{properties}

In this section we study a weak solution $u\in C^{0,1}(\Omega_1)$  of $(**)$ (see Definition~\ref{weak}, Theorem~\ref{weak existence}), using a sequence   $\{u_{\e_k}\}$  of solutions to the problems $(*_{\varepsilon_k})$ such that
\[ u_{\varepsilon_k}\to u \text{  in $C^0({\Omega_1}\cup\partial\Omega)$}.\]
We will show a minimization property for the graphs of the  functions $u_{\e_k}$ (Lemma~\ref{emin}) and show that this property passes to the limit, i.e. it passes to $\graph u$ (Lemma~\ref{eminu}). We will also examine in more detail the convergence $u_{\e_k}\to u$ (Lemma~\ref{mainconvergence}) in order to study the part of the  boundary of $\Omega_1$ where $u$ blows up, as our goal is to show that it is a generalized MOTS. Many of the arguments in this section follow those of \cite{MS} and \cite{S08}, where the corresponding results are proven for the mean curvature flow and in \cite{S08} also for general  speeds given by powers of the mean curvature (the $H^k$-flow). In \cite{MS} and \cite{S08} the ambient space where the flows are considered is the Euclidean space (in \cite{S08} manifolds that do not contain closed minimal surfaces are also considered), therefore the corresponding `approximating' functions $u_\e$ are bounded. In our case, the functions $u_\e$ have a `blow up' set which causes an extra complication.

We first prove a uniform integral estimate for the right-hand side of  the equation $(*_{{\varepsilon}})$.
\begin{Lemma}\label{normalintest}
Let $u_{{\varepsilon}}\in C^\infty(\Omega_0)$ be a solution of $(*_{{\varepsilon}})$ as  in  Theorem~\ref{blowup}. Then
\begin{equation}\label{intest}
\int_{\Omega_0}\frac{1}{\sqrt{\e^2+|\nabla{u}_{\varepsilon}|^2}}dx\leq|\partial\Omega_0|+(n+2)\lambda|\Omega|,
\end{equation}
where $\lambda=\max_i\{|\l_i|, \l_i\text{  eigenvalue of  } K\}$.
\end{Lemma}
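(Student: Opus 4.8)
The plan is to integrate the equation $(*_\varepsilon)$ over $\Omega_0$ and apply the divergence theorem to the divergence-form term, controlling the boundary contribution and the zero-order $K$-term by elementary estimates. Write the equation $(*_\varepsilon)$ as
\[
\frac{1}{\sqrt{\e^2+|\nabla u_\e|^2}} = -\dvg\left(\frac{\nabla u_\e}{\sqrt{\e^2+|\nabla u_\e|^2}}\right) + \left(g^{ij}-\frac{\nabla^i u_\e\nabla^j u_\e}{|\nabla u_\e|^2+\e^2}\right)K_{ij}.
\]
Integrating over $\Omega_0$, the first term on the right produces, by the divergence theorem, a boundary integral over $\partial\Omega_0$. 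Now $\partial\Omega_0$ splits into the outer boundary $\partial\Omega$ and the inner boundary $\Sigma_\out$; but $u_\e$ blows up over $\Sigma_\out$, so one must be a little careful and instead integrate over $\Omega_0\cap\{u_\e<T\}=\Omega_T$, apply the divergence theorem there, and then let $T\to\infty$. On $\{u_\e=T\}$ the outer normal to $\Omega_T$ is $-\nabla u_\e/|\nabla u_\e|$, so the flux of $\nabla u_\e/\sqrt{\e^2+|\nabla u_\e|^2}$ through that level set is $-|\nabla u_\e|/\sqrt{\e^2+|\nabla u_\e|^2}\le 0$; hence discarding it only helps the inequality, and on $\partial\Omega$ the flux is bounded in absolute value by $|\partial\Omega|=|\partial\Omega_0|$ (here using that the vector field has length $\le 1$; note $|\partial\Omega_0|$ should be read as $|\partial\Omega|$ since the $\Sigma_\out$ piece contributes nothing in the limit, consistent with the statement).

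**Estimating the curvature term.** For the zero-order term, observe that the matrix $g^{ij}-\frac{\nabla^i u_\e\nabla^j u_\e}{|\nabla u_\e|^2+\e^2}$ is symmetric with eigenvalues in $[0,1]$ (it is $g$ minus a rank-one term of operator norm $\frac{|\nabla u_\e|^2}{|\nabla u_\e|^2+\e^2}<1$), so its contraction against $K$ is bounded in absolute value by $(n+1)\lambda$ pointwise, where $\lambda=\max_i\{|\lambda_i|\}$ bounds the operator norm of $K$ and $n+1=\dim M$. Actually one can do slightly better accounting for the trace: $\left|(g^{ij}-\frac{\nabla^i u_\e\nabla^j u_\e}{|\nabla u_\e|^2+\e^2})K_{ij}\right| = \left|\tr_M K - \frac{\nabla^i u_\e\nabla^j u_\e K_{ij}}{|\nabla u_\e|^2+\e^2}\right|\le (n+1)\lambda+\lambda=(n+2)\lambda$, which is where the constant $(n+2)\lambda$ comes from. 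Integrating this over $\Omega_0\subset\Omega$ gives the contribution $(n+2)\lambda|\Omega|$.

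**Passing to the limit and conclusion.** Combining, for every $T$ we get $\int_{\Omega_T}\frac{1}{\sqrt{\e^2+|\nabla u_\e|^2}}\,dx \le |\partial\Omega| + (n+2)\lambda|\Omega|$, and letting $T\uparrow\infty$ with monotone convergence (the integrand is nonnegative and $\Omega_T\uparrow\Omega_0$ up to the measure-zero blow-up set) yields the claimed bound. The only genuinely delicate point is the justification of the divergence theorem on $\Omega_T$: one needs $u_\e$ smooth up to $\partial\Omega$ (which holds by the boundary regularity built into Theorem~\ref{blowup}) and $\{u_\e=T\}$ to be, for a.e.\ $T$ by Sard's theorem, a smooth hypersurface so that $\Omega_T$ is a domain with piecewise-smooth boundary; the sign of the level-set flux then does the rest. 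Everything else is a one-line eigenvalue estimate. I expect the main obstacle to be purely bookkeeping around the blow-up set at $\Sigma_\out$ — making sure the limiting procedure is clean and that the $\partial\Omega_0$ in the statement is correctly interpreted as only the $\partial\Omega$ piece surviving in the limit.
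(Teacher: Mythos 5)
Your argument is correct in substance but follows a genuinely different route from the paper's. The paper proceeds as in \cite[Lemma 2.1]{MS}: it multiplies $(*_\e)$ by a cutoff $\psi$ vanishing near $\partial\Omega_0$ with $|D\psi|\le\gamma/\delta$, integrates by parts, bounds the curvature term by $(n+2)\lambda|\Omega|$ exactly as you do, and recovers the boundary area as the Minkowski content limit $\frac{\gamma}{\delta}|\Omega_0\setminus\Omega_\delta|\to\gamma|\partial\Omega_0|$; this sidesteps both the blow-up set and any question of regularity of $u_\e$ up to $\partial\Omega$, at the price of the slightly larger constant $|\partial\Omega_0|=|\partial\Omega|+|\Sigma_\out|$. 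Your exhaustion by sublevel sets $\Omega_T=\Omega_0\cap\{u_\e<T\}$ with the divergence theorem is equally valid (for a.e.\ $T$ by Sard) and in fact yields the sharper bound with $|\partial\Omega|$ in place of $|\partial\Omega_0|$, but you have a sign slip in the bookkeeping: since $\Omega_T=\{u_\e<T\}$, the outward normal on the level set $\{u_\e=T\}$ is $+\nabla u_\e/|\nabla u_\e|$, not $-\nabla u_\e/|\nabla u_\e|$, so the flux $X\cdot\nu$ of $X=\nabla u_\e/\sqrt{\e^2+|\nabla u_\e|^2}$ there is $+|\nabla u_\e|/\sqrt{\e^2+|\nabla u_\e|^2}\ge 0$; the term that actually appears on the right-hand side of your identity is $-\int_{\{u_\e=T\}}X\cdot\nu\le 0$, and it is this nonpositivity that lets you discard it. With your stated sign the discarded term would have the wrong sign and the inequality would not follow, so the conclusion is right but for the opposite reason to the one you give. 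The only other point to make explicit is the use of $X\cdot\nu$ on $\partial\Omega$: either invoke boundary regularity of $u_\e$ (plausible from the uniform boundary gradient estimate and standard elliptic theory, but not stated in Theorem~\ref{blowup}), or simply bound that contribution by a thin-collar cutoff as in the paper, which needs only $|X|\le 1$. Your eigenvalue estimate for the $K$-term matches the paper's ($|P(u_\e)|\le(n+2)\lambda$, with $(n+1)\lambda$ in fact sufficing), though note $(n+2)\lambda$ is the cruder, not the sharper, of the two bounds you mention.
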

\begin{Remark}\label{rmk on normalintest}
Note that $|\partial\Omega_0|=|\partial\Omega|+|\Sigma_\out|$, where $\Sigma_\out$ is the outermost MOTS. In \cite{AM09} an estimate, in terms of the initial data, on $|\Sigma_\infty|$ is derived and therefore $|\partial\Omega_0|$ depends only on the initial data and $\Omega$.
\end{Remark}
\begin{proof}[Proof of Lemma \ref{normalintest}]
This follows as in \cite[Lemma 2.1]{MS}, keeping track of the extra ``$P$-term''. 
Let $\psi$ be a smooth function such that $0\leq\psi\leq1$, $\psi=1$ on $\Omega_{\delta}:=\{x\in\Omega_0|\text{dist}(x,\partial\Omega_0)>\delta\}$, $\psi=0$ on $\partial\Omega$ and $|D\psi|\leq\gamma/\delta$ for some $\gamma>1,\,\delta>0$. Multiplying $(*_{\varepsilon})$ by $\psi$ and integrating by parts we find
\begin{equation*}\begin{split}
\int_{\Omega_\d}\frac{1 }{\sqrt{\varepsilon^2+|\nabla u_{\varepsilon}|^2}}dx\leq \int_{\Omega_0\setminus\Omega_{\delta}}\frac{\nabla \psi\cdot \nabla u_{\varepsilon}}{\sqrt{\varepsilon^2+|\nabla u_{\varepsilon}|^2}}dx+\int_{\Omega_0}P(u_\e)\psi dx,
\end{split}\end{equation*}
where $P(u_\e)=\left(g^{ij}-\frac{\nabla^i u_\e\nabla^j u_\e}{|\nabla u_\e|^2+\e^2}\right) K_{ij}$.
Since $|P(u_\e)|\le (n+2)\l$, we have
\[\int_{\Omega_\d}\frac{1 }{\sqrt{\varepsilon^2+|\nabla u_{\varepsilon}|^2}}dx\leq\frac{\gamma}{\delta}|\Omega_0\setminus\Omega_{\delta}|+(n+2)\lambda|\Omega|\]
and after letting $\d\to 0$ and then $\gamma\to 1$ we obtain the result.
\end{proof}

Lemma~\ref{normalintest} and  the convergence of $\nabla u_{\e_k}$ given in \eqref{Duetou}, along with \cite[Theorems 3.1, 3.2 and 3.3]{ES4}, yield the following.

\begin{Lemma}\label{ES conv} Let $u\in C^{0,1}(\Omega_1)$ be a weak solution of $(**)$ and  $\{u_{\e_k}\}$ be a sequence of solutions to the problems $(*_{\varepsilon_k})$ such that
$ u_{\varepsilon_k}\to u$  in $C^0({\Omega_1}\cup\partial\Omega)$,
as in Definition~\ref{weak}. Then,  the following convergences are true.
\begin{itemize}
\item[(i)] 
$\int_{\Omega_1}|\nabla u_{\e_k}| fd\H^n\to \int_{\Omega_1}|\nabla u| fd\H^n\,,\,\,\forall f\in L^1_c(\Omega_1\cup\partial\Omega)$,

\item[(ii)]$\frac{\nabla u_{\e_k}}{\sqrt{\e_k^2+|\nabla u_{\e_k}|^2}}\to\frac {\nabla u}{|\nabla u|}$ \quad strongly in $L^2_\loc(\Omega_1\cap \{|\nabla u|\ne 0\};\R^{n+1})$.
\end{itemize}
\end{Lemma}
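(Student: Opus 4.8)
The plan is to follow the scheme of \cite[Theorems~3.1--3.3]{ES4}, where the analogous statements are established for the elliptic regularisation of Euclidean mean curvature flow; the new features here are the zero-order term $P(u_{\e_k}):=\bigl(g^{ij}-\nu_{\e_k}^i\nu_{\e_k}^j\bigr)K_{ij}$, the nontrivial right-hand side of $(*_{\e_k})$, and the blow-up set $\partial\Omega_1\setminus\partial\Omega$. Throughout, write $\nu_{\e_k}:=\nabla u_{\e_k}/\sqrt{\e_k^2+|\nabla u_{\e_k}|^2}$ and take all test functions in $C^1_c(\Omega_1\cup\partial\Omega)$: on their compact support the $u_{\e_k}$ are uniformly bounded (since $u_{\e_k}\to u$ locally uniformly and $u$ is finite there), hence so are the $|\nabla u_{\e_k}|$ by Lemmas~\ref{unigradest} and~\ref{bdrygradest}. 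First I would pass to a subsequence along which the Radon measures $|\nabla u_{\e_k}|\,dx$ and $\sqrt{\e_k^2+|\nabla u_{\e_k}|^2}\,dx$ on $\Omega_1$ --- locally finite by the gradient bounds just noted and by Lemma~\ref{normalintest} --- converge weakly-$*$ on $\Omega_1\cup\partial\Omega$ to Radon measures $\mu$ and $\tilde\mu$. From $|p|\le\sqrt{\e^2+|p|^2}\le|p|+\e$ and $\e_k\to0$ one gets $\mu=\tilde\mu$, while \eqref{Duetou} and weak lower semicontinuity give $\mu\ge|\nabla u|\,dx$. The crux --- and the step I expect to be the main obstacle --- is the matching \emph{no-mass-loss} bound $\tilde\mu\le|\nabla u|\,dx$; this is precisely where one must exploit that $u_{\e_k}$ \emph{solves} $(*_{\e_k})$, not merely that it has bounded gradient.

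To obtain it, I would test $(*_{\e_k})$, written as $\dvg\nu_{\e_k}-P(u_{\e_k})=-(\e_k^2+|\nabla u_{\e_k}|^2)^{-1/2}$, against the Lipschitz, compactly supported function $(u_{\e_k}-u)\varphi$ with $\varphi\in C^1_c(\Omega_1\cup\partial\Omega)$, $\varphi\ge0$. Because $u_{\e_k}=u=0$ on $\partial\Omega$ and $\varphi$ is supported away from $\partial\Omega_1\setminus\partial\Omega$, integration by parts produces no boundary contribution and gives
\[
\begin{split}
\int\varphi\,\nu_{\e_k}\cdot(\nabla u_{\e_k}-\nabla u)
=&\int\frac{(u_{\e_k}-u)\varphi}{\sqrt{\e_k^2+|\nabla u_{\e_k}|^2}}-\int(u_{\e_k}-u)\,\nu_{\e_k}\cdot\nabla\varphi\\
&-\int P(u_{\e_k})(u_{\e_k}-u)\varphi.
\end{split}
\]
Every term on the right tends to $0$: the first is $\le\|(u_{\e_k}-u)\varphi\|_{\infty}\int_{\Omega_0}(\e_k^2+|\nabla u_{\e_k}|^2)^{-1/2}\,dx$, bounded via Lemma~\ref{normalintest}; the other two are $\le\|u_{\e_k}-u\|_{L^\infty(\spt\varphi)}$ times a fixed constant, using $|\nu_{\e_k}|\le1$ and $|P(u_{\e_k})|\le(n+2)\lambda$; and $\|u_{\e_k}-u\|_{L^\infty(\spt\varphi)}\to0$. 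Since $\nu_{\e_k}\cdot\nabla u_{\e_k}=|\nabla u_{\e_k}|^2/\sqrt{\e_k^2+|\nabla u_{\e_k}|^2}$ differs from $\sqrt{\e_k^2+|\nabla u_{\e_k}|^2}$ by at most $\e_k$, it follows that $\int\varphi\sqrt{\e_k^2+|\nabla u_{\e_k}|^2}-\int\varphi\,\nu_{\e_k}\cdot\nabla u\to0$; and as $\varphi\ge0$ and $\nu_{\e_k}\cdot\nabla u\le|\nu_{\e_k}|\,|\nabla u|\le|\nabla u|$ pointwise, this forces $\limsup_k\int\varphi\sqrt{\e_k^2+|\nabla u_{\e_k}|^2}\le\int\varphi|\nabla u|$, i.e.\ $\tilde\mu\le|\nabla u|\,dx$. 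Hence $\mu=\tilde\mu=|\nabla u|\,dx$, which is (i) for continuous compactly supported $f$; the extension to arbitrary $f\in L^1_c(\Omega_1\cup\partial\Omega)$ is immediate from the uniform $L^\infty_{\loc}$ bounds on $|\nabla u_{\e_k}|$.

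For (ii), the same limit together with $\mu=|\nabla u|\,dx$ yields $\int\varphi\,\nu_{\e_k}\cdot\nabla u\to\int\varphi|\nabla u|$, so that $\int\varphi\,(|\nabla u|-\nu_{\e_k}\cdot\nabla u)\to0$ for every nonnegative $\varphi\in C^1_c(\Omega_1\cup\partial\Omega)$, the integrand being pointwise nonnegative. Given a compact $K\subset\Omega_1\cup\partial\Omega$, $\delta>0$, and $E_\delta:=K\cap\{|\nabla u|\ge\delta\}$, the elementary inequality
\[
\Bigl|\nu_{\e_k}-\frac{\nabla u}{|\nabla u|}\Bigr|^2\le 2\Bigl(1-\nu_{\e_k}\cdot\frac{\nabla u}{|\nabla u|}\Bigr)\le\frac{2}{\delta}\bigl(|\nabla u|-\nu_{\e_k}\cdot\nabla u\bigr)\qquad\text{a.e.\ on }E_\delta
\]
(valid since $|\nu_{\e_k}|\le1$), combined with a choice $\varphi\ge\mathbf 1_K$, gives $\int_{E_\delta}\bigl|\nu_{\e_k}-\nabla u/|\nabla u|\bigr|^2\le\frac{2}{\delta}\int\varphi\,(|\nabla u|-\nu_{\e_k}\cdot\nabla u)\to0$, which is exactly the strong $L^2_{\loc}(\Omega_1\cap\{|\nabla u|\ne0\})$ convergence asserted in (ii). Finally, since all the limits above are independent of the chosen subsequence, they hold along the original sequence $\{u_{\e_k}\}$. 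As indicated, the one genuinely delicate point is the no-mass-loss step of the second paragraph: it is what upgrades the semicontinuity inequality $\mu\ge|\nabla u|\,dx$ to an equality, and it is the only place where both the PDE $(*_{\e_k})$ and the uniform integral estimate of Lemma~\ref{normalintest} enter essentially.
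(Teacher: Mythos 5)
Your argument is correct and is essentially the paper's own proof: the paper simply defers to the Evans--Spruck scheme of \cite[Theorems 3.1--3.3]{ES4}, with their hypothesis (3.5) replaced by Lemma~\ref{normalintest}, the equation $(*_{\e_k})$ and the boundedness of $P$, which is precisely the computation you carry out by testing the equation against $(u_{\e_k}-u)\varphi$ and using \eqref{Duetou} for the lower bound. The only cosmetic point is that your $E_\delta$-estimate gives (ii) on $K\cap\{|\nabla u|\ge\delta\}$ for each fixed $\delta$; letting $\delta\to0$ and using $\H^{n+1}(K\cap\{0<|\nabla u|<\delta\})\to0$ together with the pointwise bound $\bigl|\nu_{\e_k}-\nabla u/|\nabla u|\bigr|\le 2$ yields the stated $L^2_{\loc}(\Omega_1\cap\{|\nabla u|\ne0\})$ convergence.
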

\begin{proof} The proof is exactly the same as that of \cite[Theorems 3.1, 3.2 and 3.3]{ES4}, with the difference that here we should substitute the domain of definition of all the functions (which is $\R^n$ in \cite{ES4}) with  $\Omega_1\subset M$. This change leaves the proof unaltered, provided that the test functions used are taken to be in $C^\infty_c(\Omega_1\cup \partial\Omega)$, instead of  $C^\infty_c(\R^n)$. We also point out  that hypothesis (3.2) used in \cite{ES4} should be replaced here with the  convergence $ u_{\varepsilon_k}\to u$  in $C^0({\Omega_1}\cup\partial\Omega)$ and that of $\nabla u_{\e_k}$ given in  \eqref{Duetou}, and hypothesis (3.5) used in \cite{ES4} is still true in our case because of Lemma~\ref{normalintest}, equation $(*_{\e_k})$ and the fact that $P$ is bounded. Finally, we remark that the result in \cite[Theorem 3.2]{ES4} is an intermediate step towards proving  \cite[Theorem 3.3]{ES4} (which corresponds to (ii) here), which in our case is replaced by
$\int_{\Omega_1}\frac{\nabla u_{\e_k}}{\sqrt{\e_k^2+|\nabla u_{\e_k}|^2}}fd\H^n\to \int_{\Omega_1}\frac {\nabla u}{|\nabla u|} fd\H^n$
for all  $f\in L^1(\Omega_1;\R^{n+1})$ with compact support in $(\Omega_1\cup\partial\Omega)\cap\{|\nabla u|>0\}$.
\end{proof}

Using now Lemma~\ref{normalintest}, together with the convergence (i) of Lemma~\ref{ES conv}, yields the following.
\begin{Lemma} \label{nonfaten}  Let $u\in C^{0,1}(\Omega_1)$ be a weak solution of $(**)$, as in Definition~ \ref{weak}, then $\H^{n+1}(\{x\in \Omega_1||\nabla u|= 0)=0$. 
\end{Lemma}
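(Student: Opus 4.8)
The plan is to combine the uniform integral bound of Lemma~\ref{normalintest} with the convergence of $\int|\nabla u_{\varepsilon_k}|f$ supplied by Lemma~\ref{ES conv}(i), via a Cauchy--Schwarz trick. Throughout, $|\cdot|$ denotes the Riemannian volume measure $\H^{n+1}$. Set $A:=\{x\in\Omega_1:|\nabla u|(x)=0\}$; this is well defined up to a null set since $u\in C^{0,1}_{\loc}(\Omega_1)$ is differentiable a.e.\ by Rademacher. Since $\Omega_1$ can be exhausted by compact subsets, it suffices to show $|A\cap K|=0$ for every compact $K\subset\Omega_1$, and then let $K$ exhaust $\Omega_1$ to conclude $\H^{n+1}(A)=0$.

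First I would fix a compact $K\subset\Omega_1$ and put $E:=A\cap K$, so that $\chi_E\in L^1_c(\Omega_1\cup\partial\Omega)$. For each $k$, applying the Cauchy--Schwarz inequality on $E$ to the factorization $1=(\varepsilon_k^2+|\nabla u_{\varepsilon_k}|^2)^{-1/4}\cdot(\varepsilon_k^2+|\nabla u_{\varepsilon_k}|^2)^{1/4}$ gives
\[
|E|\;\le\;\left(\int_E\frac{dx}{\sqrt{\varepsilon_k^2+|\nabla u_{\varepsilon_k}|^2}}\right)^{1/2}\left(\int_E\sqrt{\varepsilon_k^2+|\nabla u_{\varepsilon_k}|^2}\,dx\right)^{1/2}.
\]
The first factor is bounded by $\bigl(|\partial\Omega_0|+(n+2)\lambda|\Omega|\bigr)^{1/2}$ by Lemma~\ref{normalintest} (note $E\subset\Omega_1\subset\Omega_0$ and the integrand is positive), while the second is at most $\bigl(\varepsilon_k|\Omega|+\int_E|\nabla u_{\varepsilon_k}|\,dx\bigr)^{1/2}$, using $\sqrt{\varepsilon_k^2+|\nabla u_{\varepsilon_k}|^2}\le\varepsilon_k+|\nabla u_{\varepsilon_k}|$ and $|E|\le|\Omega|$.

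Next I would pass to the limit $k\to\infty$. By Lemma~\ref{ES conv}(i) with test function $f=\chi_E$, we have $\int_E|\nabla u_{\varepsilon_k}|\,dx\to\int_E|\nabla u|\,dx$, and this limit is $0$ because $|\nabla u|=0$ a.e.\ on $E\subset A$ by the definition of $A$. Since also $\varepsilon_k\downarrow0$, the right-hand side of the displayed inequality tends to $0$, forcing $|E|=|A\cap K|=0$. Exhausting $\Omega_1$ by such $K$ then yields $\H^{n+1}(A)=0$, as claimed.

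I do not expect a serious obstacle: the core estimate is a one-line Cauchy--Schwarz followed by a limit. The only points requiring a little care are that $E=A\cap K$ is compactly contained in $\Omega_1\cup\partial\Omega$, so that $\chi_E$ is an admissible test function in Lemma~\ref{ES conv}(i), and that the $u_{\varepsilon_k}$ here are exactly the solutions produced in Theorem~\ref{blowup}, which are defined and smooth on all of $\Omega_0\supset\Omega_1\supset E$, so that Lemma~\ref{normalintest} genuinely applies over $E$.
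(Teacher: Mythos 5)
Your proof is correct and follows essentially the same route as the paper: the paper's proof defers to \cite[Lemma 2.3]{MS}, whose argument is precisely your Cauchy--Schwarz estimate combining the uniform bound of Lemma~\ref{normalintest} with the convergence in Lemma~\ref{ES conv}(i) applied to $f=\chi_{A\cap K}$ (indeed the paper introduces these two lemmas immediately before Lemma~\ref{nonfaten} for exactly this purpose). You have simply written out in full the details that the paper outsources to \cite{MS}.
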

\begin{proof}
The proof is exactly the same as that of \cite[Lemma 2.3]{MS}, replacing  $\e_i$ and $\Omega$ with $\e_k$  and $\Omega_1\subset M$ respectively and the set $A$ with $A\cap W=\{x\in\Omega_1\cap W:Du(x)=0\}$ for any $W\subset\subset\Omega_1\cup\partial\Omega$. By the proof of   \cite[Lemma 2.3]{MS}, we then obtain that $\H^{n+1}(A\cap W)=0$ for any $W\subset\subset\Omega_1\cup\partial\Omega$ and thus the result follows.
\end{proof}

\begin{Remark}\label{ES conv2} Lemma~\ref{nonfaten} and Lemma~\ref{ES conv} (ii) imply that
$\frac{\nabla u_{\e_k}}{\sqrt{\e_k^2+|\nabla u_{\e_k}|^2}}\to\frac {\nabla u}{|\nabla u|}$ strongly in $L^2_\loc(\Omega_1;\R^n)$.
\end{Remark}

\begin{Definition}\label{Ue}
For a solution $u_{\e}\in C^\infty(\Omega_0)$ of $(*_{\e})$ we define  the function $U_{\e}:\Omega_0\times\R\to \R$ by $U_{\e}(x,z)=u_{\e}(x)-\e z$ and we let
\[\wt E_t^\e=\{(x,z)\in \Omega_0\times\R: U_{\e}(x,z)>t\}\]
and
\[\wt\Sigma^{\e}_t=\{(x,z)\in \Omega_0\times \R:U_{\e}(x,z)=t\}=\graph\left(\frac{u_{\e}}{\e}-\frac{t}{\e}\right),\]
the latter being the hypersurfaces given by the level sets of $U_\e$. 
\end{Definition}
As mentioned in the introduction (see \eqref{translating}), $\wt\Sigma^{\e}_t$ are smooth translating solutions of  the  null mean curvature flow $(*)$. We also note that, by equation $(*_{\hat \e})$,  the mean curvature of $\wt\Sigma^{\e}_t$ is given by
\begin{equation}\label{MCet}
H^{\e}_t=\dvg\nu_\e=-P(u_{\e})+\frac{1}{\sqrt{|\nabla u_{\e}|^2+{\e}^2}},
\end{equation}
where   $P(u_{\e})=\left(g^{ij}-\frac{\nabla^i u_{\e}\nabla^j u_{\e}}{|\nabla u_{\e}|^2+{\e}^2}\right) K_{ij}$ and $\nu_{\e}$ is the upward pointing unit normal to $\wt\Sigma^{\e}_t$. Note that $P(u_{\e})=\left(g^{ij}-\nu^i_{\e}\nu^j_{\e}\right) K_{ij}$, and thus we will also express this quantity as $P(\nu_{\e})$. Recall that $K$ and $\nu_\e$ are always extended in $M\times\R$ so that they are independent of the vertical component.

\begin{Lemma}\label{H+Pk} For any solution  $u_{\e}\in C^\infty(\Omega_0)$ of $(*_{\e})$, any $t\in \R$  and any interval $I=[a,b]\subset \R$ the graph  $\wt\Sigma^{\e}_t=\graph\left(\frac{u_{\e}}{\e}-\frac{t}{\e}\right)$ satisfies 
\[\int_{0}^{\infty}\int_{\wt\Sigma^{\e}_t\cap (\Omega_0\times I)}|H^\e_t+ P(\nu_\e)|^2d\H^{n+1}dt\le (b-a)\left(|\partial\Omega_0|+(n+2)\lambda|\Omega|
\right), \]
where  $ \lambda=\max_i\{|\l_i|, \l_i\text{  eigenvalue of  } K\}$ and the rest of the notation is as in Definition~\ref{Ue} and equation  \eqref{MCet}. 
\end{Lemma}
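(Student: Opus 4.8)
The plan is to reduce the claim, via the coarea formula applied to the time-of-arrival function $U_\e(x,z) = u_\e(x) - \e z$ of the translating solution, to the integral bound of Lemma~\ref{normalintest}. First I would record the pointwise identity for the null mean curvature of the translating graphs. By \eqref{MCet}, on $\wt\Sigma^\e_t = \graph\big(\tfrac{u_\e}{\e} - \tfrac{t}{\e}\big)$ one has
\[H^\e_t + P(\nu_\e) = \frac{1}{\sqrt{|\nabla u_\e|^2 + \e^2}},\]
and, evaluated at the base point, the right-hand side is independent of $t$: the hypersurfaces $\wt\Sigma^\e_t$ for different $t$ are vertical translates of $\graph(\tfrac{u_\e}{\e})$, and both the mean curvature and the vertically-extended unit normal $\nu_\e$ (hence $P(\nu_\e)$) are invariant under vertical translation. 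Writing $\nabla U_\e = (\nabla u_\e, -\e)$ for the gradient of $U_\e$ in $(M\times\R,\bar g)$, this gives $|H^\e_t + P(\nu_\e)|^2 = (|\nabla u_\e|^2 + \e^2)^{-1} = |\nabla U_\e|^{-2}$ on $\wt\Sigma^\e_t$.

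Next I would apply the coarea formula to $U_\e$ on $\Omega_0 \times I$. Since $\e > 0$ we have $|\nabla U_\e| \ge \e > 0$ everywhere, so $U_\e$ has no critical points, its level sets are precisely the $\wt\Sigma^\e_t$, and the coarea formula yields
\[\int_0^\infty \int_{\wt\Sigma^\e_t \cap (\Omega_0 \times I)} \frac{1}{|\nabla U_\e|^2}\, d\H^{n+1}\, dt = \int_{(\Omega_0 \times I) \cap \{U_\e > 0\}} \frac{1}{|\nabla U_\e|}\, d\H^{n+2},\]
where restricting the $t$-integral to $[0,\infty)$ corresponds to restricting the region on the right to $\{U_\e > 0\}$. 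The integrand $|\nabla U_\e|^{-1} = (|\nabla u_\e(x)|^2 + \e^2)^{-1/2}$ does not depend on $z$, so Fubini's theorem together with the fact that for each fixed $x$ the slice $\{z \in I : u_\e(x) - \e z > 0\}$ has length at most $|I| = b-a$ gives
\[\int_{(\Omega_0 \times I) \cap \{U_\e > 0\}} \frac{1}{|\nabla U_\e|}\, d\H^{n+2} \le (b-a) \int_{\Omega_0} \frac{1}{\sqrt{|\nabla u_\e|^2 + \e^2}}\, dx.\]
Chaining the three displays and invoking Lemma~\ref{normalintest}, which bounds the last integral by $|\partial\Omega_0| + (n+2)\lambda|\Omega|$, proves the lemma.

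There is no real obstacle here: once the speed identity is in place the result is essentially bookkeeping around the coarea formula. The two points that warrant a little care are the translation-invariance used to render the right-hand side of \eqref{MCet} independent of $t$ (so that it can be pulled out of the $t$-integral), and the correct use of the coarea Jacobian $|\nabla U_\e|$, including the matching of the range $t \in [0,\infty)$ with the superlevel set $\{U_\e > 0\}$.
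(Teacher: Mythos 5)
Your proposal is correct and follows essentially the same route as the paper: the paper likewise identifies $|H^\e_t+P(\nu_\e)|^2=|\ov\nabla U_\e|^{-2}$ from \eqref{MCet}, applies the coarea formula to $U_\e$ on $\Omega_0\times I$, and then reduces to Lemma~\ref{normalintest} via the $z$-independence of the integrand. Your handling of the restriction to $\{U_\e>0\}$ and the Fubini/slice-length step is just a slightly more explicit writing of the same bookkeeping.
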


\begin{proof}
Using the coarea formula and the expression of $H^\e_t$ given in  \eqref{MCet}, we have 
\[\begin{split}\int_{\Omega_0\times I}\frac{1}{|\ov\nabla U_{\e}|}dx&=\int_{\Omega_0\times I}\frac{1}{|\ov \nabla U_{\e}|^2} |\ov\nabla U_{\e}|dx\\
&=\int_{0}^{\infty}\int_{\{(x,z):U_{\e}(x,z)=t\}\cap (\Omega_0\times I)}\frac{1}{|\ov \nabla U_{\e}|^2}d\H^{n+1}dt\\
&= \int_{0}^{\infty}\int_{\wt\Sigma^{\e}_t\cap (\Omega_0\times I)}|H^{\e}_t+ P(\nu_\e)|^2d\H^{n+1}dt,
\end{split}\]
where $\ov\nabla=\nabla^{M\times\R}$. The result now follows by  Lemma~\ref{normalintest}.
\end{proof}

Next we will show that the sets 
$\wt E_t^\e=\{U_{\e}>t\}$
(as in Definition~\ref{Ue}) minimize area plus {\it bulk energy P} on the outside in $\Omega_0\times\R$. More specifically, we have the following.

\begin{Lemma} \label{emin} For any solution  $u_{\e}\in C^\infty(\Omega_0)$ of $(*_{\e})$ and any $t\in \R$  the set  $\wt E^{\e}_t=\{U_\e>t\}$ satisfies  the following minimization property.
\[|\partial^*\wt E_t^\e\cap W|+\int_{W\cap \wt E_t^\e} P(\nu_k)d\H^{n+2}\le |\partial^*F\cap W|+\int_{W\cap F} P(\nu_\e)d\H^{n+2}\]
for any compact set $W\subset \Omega\times\R$ and any finite perimeter set $F$ with $\wt E_t^\e\subset F$ and $F\setminus \wt E_t^\e\subset W$. Here, again we use the notation in Definition~\ref{Ue} and equation \eqref{MCet}.
\end{Lemma}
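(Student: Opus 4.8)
The claim is that the superlevel set $\wt E^\e_t=\{U_\e>t\}$ of the translator function $U_\e(x,z)=u_\e(x)-\e z$ is outward-minimizing for the functional ``perimeter plus bulk energy $P$'' in $\Omega_0\times\R$. The key structural fact is that $\wt\Sigma^\e_t=\partial\wt E^\e_t$ is a \emph{smooth} hypersurface (a graph over $\Omega_0$) whose mean curvature, by \eqref{MCet}, equals $H^\e_t=-P(\nu_\e)+\tfrac{1}{\sqrt{|\nabla u_\e|^2+\e^2}}$, so that its null mean curvature $H^\e_t+P(\nu_\e)=\tfrac{1}{\sqrt{|\nabla u_\e|^2+\e^2}}$ is strictly positive. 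The plan is to exploit this by a standard calibration/divergence-theorem argument: construct a vector field $X$ on a neighborhood of $\wt E^\e_t$ with $|X|\le 1$, $X=\nu_\e$ on $\wt\Sigma^\e_t$, and $\dvg_{\bar g}X=-P(\nu_\e)+(H^\e_t+P(\nu_\e))\ge -P(\nu_\e)$ pointwise near the boundary—more precisely, use the fact that $\nu_\e$ extended off the graph by the nearest-point projection has divergence equal to the mean curvature of the level sets of $U_\e$, which is $H^\e_t+P(\nu_\e)$ minus (the trace of) the $P$-term, giving exactly $\dvg X \ge -P$ where $X$ is defined.

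Concretely, first I would fix a competitor $F$ with $\wt E^\e_t\subset F$ and $F\setminus\wt E^\e_t\subset W\subset\subset\Omega_0\times\R$. On the open set $\{U_\e>t\}$ (a neighborhood of which contains $W\cap\wt E^\e_t$ only along its boundary, but one works on the region between $\partial^*F$ and $\wt\Sigma^\e_t$) the vector field $X:=\dfrac{\ov\nabla U_\e}{|\ov\nabla U_\e|}=-\nu_\e$ pointing \emph{into} $\wt E^\e_t$... — more cleanly, take $X=-\nu_\e$ extended to the region $F\setminus\wt E^\e_t$, where it is smooth since $\nabla u_\e\ne 0$ there implies $\ov\nabla U_\e\ne 0$. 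Then $\dvg_{\bar g}X$ is the (negative) mean curvature of the level sets $\{U_\e=s\}$, and by the identity relating the mean curvature of a level set to the graph equation one computes $\dvg_{\bar g}X = -(H_s+P(\nu_\e)) + \text{(something)}$; tracking signs with the null-mean-curvature equation $(**)$ for $U_\e$ (which $U_\e$ solves smoothly, being the time-of-arrival function of a smooth translating solution of $(*)$) gives $\dvg_{\bar g}X \le P(\nu_\e)$ on $F\setminus\wt E^\e_t$, with equality relating to the positive speed. Integrating $\dvg_{\bar g}X$ over the region $F\setminus\wt E^\e_t$ and applying the divergence theorem, the boundary integral over $\wt\Sigma^\e_t$ contributes $-|\partial^*\wt E^\e_t\cap W|$ (since $X=-\nu_\e$ is the inner normal there, equality $|X|=1$), the boundary integral over $\partial^*F$ is bounded by $|\partial^*F\cap W|$ (since $|X|\le 1$), and the volume term gives $-\int_{F\setminus\wt E^\e_t}\dvg_{\bar g}X\,\ge -\int_{F\setminus\wt E^\e_t}P(\nu_\e)$; rearranging yields exactly the asserted inequality.

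I would follow the now-standard template for such statements (the mean-curvature-flow case in \cite{MS} and \cite{S08}, and the capillarity/regularized-Jang case in \cite{SY81, AM09}); the only genuinely new bookkeeping is the $P$-term, which is harmless because it is a bounded zero-order quantity depending only on $\nu_\e$ (hence, after extending $K$ and $\nu_\e$ to be $z$-independent, well defined on all of $\Omega_0\times\R$), and because the relevant sign of $P$ in the divergence computation matches the sign in the functional being minimized. One technical point to be careful with: the competitor $F$ need not have smooth boundary, so the divergence theorem must be applied in the sense of sets of finite perimeter (using $\dvg_{\bar g}X\in L^\infty$ and $X$ Lipschitz on a neighborhood of $\ov{F\setminus\wt E^\e_t}$, which holds since that region is compactly contained in $\{|\ov\nabla U_\e|>0\}$), and one should note $\H^{n+2}(\partial F\setminus\partial^*F)=0$ is not needed—only the Gauss–Green formula for finite perimeter sets is.

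\medskip

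\emph{Main obstacle.} The delicate step is verifying the sign $\dvg_{\bar g}(-\nu_\e)\le P(\nu_\e)$ on the transition region $F\setminus\wt E^\e_t$—i.e.\ that the mean curvature of \emph{every} level set $\{U_\e=s\}$, $s<t$, and not just the level $s=t$, satisfies $H_s+P(\nu_\e)\ge 0$ there. This follows because $U_\e$ solves $(**)$ smoothly on all of $\Omega_0\times\R$ with right-hand side $-1/|\ov\nabla U_\e|<0$, so \emph{every} level set of $U_\e$ has strictly positive null mean curvature; equivalently, the $\wt\Sigma^\e_s$ are all translates of one another and each is a graph over $\Omega_0$ with null mean curvature $1/\sqrt{|\nabla u_\e|^2+\e^2}>0$. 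Making this precise—writing the divergence of the unit normal field of the level sets of $U_\e$ in the product metric $\bar g$ and identifying it via $(**)$—is the crux, and the rest is routine.
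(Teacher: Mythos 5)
Your proposal is correct and follows essentially the same route as the paper's proof: the calibration field is the $z$-independent unit normal $\nu_\e$ of the level-set foliation of $U_\e$, whose divergence equals $-P(\nu_\e)+|\ov\nabla U_\e|^{-1}\ge -P(\nu_\e)$ at every point of $\Omega_0\times\R$ directly by $(*_\e)$ (so the point you flag as the ``crux'' is immediate, all level sets $\wt\Sigma^\e_s$ being vertical translates of one smooth graph), and the divergence theorem for finite perimeter sets applied on $F\setminus\wt E^\e_t$, after discarding the positive speed term $|\ov\nabla U_\e|^{-1}$, gives the inequality exactly as you rearrange it. The only blemish is sign bookkeeping: with your choice $X=-\nu_\e$ and the outward normal of $F\setminus\wt E^\e_t$, the inner boundary piece contributes $+|\partial^*\wt E^\e_t\cap W|$ rather than $-|\partial^*\wt E^\e_t\cap W|$, but handled consistently this is exactly the paper's computation with $\nu_\e$ in place of $-\nu_\e$.
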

\begin{proof}
Let $W$ and $F$ be as in the statement of the lemma and note that $F\setminus\wt E_t^\e\subset\Omega_0\times\R$.
By $(*_{\hat \e})$ (see also \eqref{MCet}), we have that
\[\dvg \nu_\e= -P(\nu_\e)+\frac{1}{|\ov \nabla  U_{\e}|}.\]
 The divergence theorem, using $\nu_\e$ as a calibration, yields
\[\begin{split}\int_{F\setminus \wt E^\e_t}-P(\nu_\e)+\frac{1}{|\ov \nabla  U_{\e}|}d\H^{n+2}=&-\int_{\partial^* \wt E_t^\e\cap W}\nu_\e\cdot \nu_{\partial^* \wt E_t^\e}d\H^{n+1}\\
&+ \int_{\partial^* F\cap W}\nu_\e\cdot \nu_{\partial^* F}d\H^{n+1}\\
&\le-|\partial^*\wt E_t^\e\cap W|+|\partial^*F\cap W|,
\end{split}\]
where $\nu_{\partial^* \wt E_t^\e}$ and $\nu_{\partial^* F}$ denote the outward pointing unit normals to $\partial^* \wt E_t^\e$ and ${\partial^* F}$ respectively. Using this, along with the fact that $\frac{1}{|D U_{\e}|}>0$, we have
\[\begin{split}
|\partial^*\wt E_t^\e\cap W|+\int_{W\cap \wt E_t^\e} P(\nu_\e)d\H^{n+2}=&|\partial^*\wt E_t^\e\cap W|-\int_{F\setminus \wt E^\e_t} P(\nu_\e)d\H^{n+2}\\
&+\int_{W\cap F} P(\nu_\e)d\H^{n+2}\\
\le& |\partial^*F\cap W| +\int_{W\cap F} P(\nu_\e)d\H^{n+2}.
\end{split}\]
\end{proof}

\begin{Remark}\label{area bound}
Lemma~\ref{emin} provides  a local uniform  area bound for $\partial^*\wt E^\e_t=\wt\Sigma^\e_t$  in $\Omega\times\R$ (since $K$, and thus $P$, is bounded). 
\end{Remark}

\begin{Remark}\label{minimrmk}
Arguing similarly to the proof of Lemma~\ref{emin}, it is not hard to show that the sets $\wt E_t^\e$ actually minimize (not only on the outside) the following
\[|\partial^*\wt E_t^\e\cap W|+\int_{W\cap \wt E_t^\e} P(\nu_\e)-\frac{1}{|\ov \nabla U_{\e}|}d\H^{n+2}.\]
However, this will not be needed in this paper.
\end{Remark}

We will now focus on a sequence  of  solutions to the problems $(*_{\varepsilon_k})$   that converge to a weak solution of  $(**)$.
\begin {Definition}\label{U} Let $u\in C^{0,1}(\Omega_1)$ be a weak solution of $(**)$ and  $\{u_{\e_k}\}\subset C^\infty(\Omega_0)$ be a sequence of solutions to the problems $(*_{\varepsilon_k})$ such that
$ u_{\varepsilon_k}\to u$  in $C^0({\Omega_1}\cup\partial\Omega)$,
as in Definition~\ref{weak} (see also Theorem~\ref{weak existence}). We define the function $U:\Omega_1\times\R\to\R$ by
\[U(x,z)=u(x).\]
Note that $U\in C^{0,1}(\Omega_1\times\R)$ and $U_{\e_k}\to U$ in $C^0(({\Omega_1}\cup\partial\Omega)\times\R)$, where the functions  $U_{\e_k}$ are as in Definition~\ref{Ue}. We, 
furthermore, set 
\[\wt E_t=\{(x,z)\in \Omega_1\times\R: U(x,z)>t\}= E_t\times\R\,,\,\,E_t=\{x\in \Omega_1:u(x)>t\}\]
and
\[\wt\Sigma_t=\partial \wt E_t=\Sigma_t\times\R\,,\,\,\Sigma_t=\partial E_t.\]
Finally,  for notational simplicity, the sets $\wt E_t^{\e_k}$ and $\wt\Sigma_t^{\e_k}$, as defined in Definition~\ref{Ue}, will be denoted by $\wt E_t^k$ and $\wt\Sigma_t^k$ respectively. Moreover, the upward pointing unit normal to $\wt\Sigma _t^k$ and its mean curvature will be denoted by $\nu_k$ and $H_t^k$ respectively, so that equation \eqref{MCet} now reads
\begin{equation}\label{MCet2}
H_t^k=\dvg\nu_k=-P(u_{\e_k})+\frac{1}{\sqrt{|\nabla u_{\e_k}|^2+1}},
\end{equation}
where recall that $P(u_{\e_k})=P(\nu_k)=(g^{ij}-\nu_k^i\nu_k^j)K_{ij}$.
\end{Definition}
 
We next want to show that the minimizing property of $\wt E_t^k$, described in Lemma~\ref{emin}, is also true for the limit $\wt E_t$. We first show that a weak solution $u$ is {\it non-fattening}, which will in turn imply that $E_t^k\to \wt E_t$ in $L^1_\loc$ for {\it all} $t>0$ (the convergence here should be understood as convergence in $L^1_\loc(\Omega_1\times\R)$ of the corresponding characteristic functions). More specifically, we have the following.
\begin{Lemma}\label{nonfat} Let $u\in C^{0,1}(\Omega_1)$ be a weak solution of $(**)$. Then, for all $t>0$ $\H^{n+1}(\{u=t\})=0$.
\end{Lemma}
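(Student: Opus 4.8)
The plan is to obtain this \emph{non-fattening} property as an immediate consequence of the preceding Lemma~\ref{nonfaten}, which already guarantees that the critical set $\{x\in\Omega_1:|\nabla u|=0\}$ is $\H^{n+1}$-null. The only additional ingredient needed is the classical fact that the (weak) gradient of a $W^{1,1}_{\loc}$ function vanishes $\H^{n+1}$-almost everywhere on each of its level sets (Stampacchia's lemma, see e.g.\ \cite[Lemma 7.7]{GT01}); applied to our Lipschitz $u$ it says precisely that, up to an $\H^{n+1}$-null set, every level set of $u$ sits inside the critical set, which is null.

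Concretely, I would fix $t>0$ and argue on relatively compact pieces of $\Omega_1$. First note that for $t>0$ the level set $\{u=t\}$ is contained in a compact subset $\Lambda\subset\Omega_1$: it cannot approach $\partial\Omega$, where $u=0<t$, nor the remaining part $\partial\Omega_1\setminus\partial\Omega$, where $u\to+\infty$ by Theorem~\ref{weak existence}. Choosing an open $W$ with $\Lambda\subset W\subset\subset\Omega_1\cup\partial\Omega$, we have $u\in C^{0,1}(\overline W)\subset W^{1,\infty}(W)$, so Stampacchia's lemma gives a set $N\subset W$ with $\H^{n+1}(N)=0$ and $\nabla u=0$ on $\{x\in W:u(x)=t\}\setminus N$. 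Hence
\[
\{u=t\}=\{x\in W:u(x)=t\}\subset\{x\in W:|\nabla u(x)|=0\}\cup N,
\]
and applying Lemma~\ref{nonfaten} together with $\H^{n+1}(N)=0$ yields $\H^{n+1}(\{u=t\})=0$, as claimed. (Alternatively one may avoid the compactness remark and simply exhaust $\Omega_1$ by an increasing sequence of relatively compact open sets $W_j$, run the argument on each $W_j$, and let $j\to\infty$.)

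I do not expect a genuine obstacle in this lemma: essentially all of its substance is carried by Lemma~\ref{nonfaten}, whose proof follows \cite[Lemma 2.3]{MS} (and ultimately rests on the uniform estimates of Lemma~\ref{normalintest} and the convergence $\tfrac{\nabla u_{\e_k}}{\sqrt{\e_k^2+|\nabla u_{\e_k}|^2}}\to\tfrac{\nabla u}{|\nabla u|}$). The one point requiring a little care is that $u$ is only \emph{locally} Lipschitz on $\Omega_1$ and blows up on part of $\partial\Omega_1$, so Stampacchia's lemma and Lemma~\ref{nonfaten} must be invoked on relatively compact subsets before concluding; as indicated above, for $t>0$ this is harmless because the level set stays inside a fixed compact piece of $\Omega_1$.
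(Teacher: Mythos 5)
Your argument is correct, but it is a genuinely different route from the one in the paper. You reduce the statement to Lemma~\ref{nonfaten} plus Stampacchia's lemma ($\nabla u=0$ $\H^{n+1}$-a.e.\ on each level set of a $W^{1,1}_{\loc}$ function), so that $\{u=t\}$ is, up to a null set, contained in $\{|\nabla u|=0\}$, which is $\H^{n+1}$-null; this is legitimate, since Lemma~\ref{nonfaten} is established beforehand and its proof (via \cite[Lemma 2.3]{MS}, Lemma~\ref{normalintest} and Lemma~\ref{ES conv}) nowhere uses non-fattening, so there is no circularity, and it gives the conclusion for \emph{every} $t>0$ at once. The paper instead argues on the approximators: using the coarea formula, equation \eqref{MCet2}, H\"older's inequality, the local area bound of Remark~\ref{area bound} and Lemma~\ref{H+Pk}, it derives the uniform-in-$k$ estimate $|\H^{n+2}(\wt E_{t_1}^k\cap\Omega_1')-\H^{n+2}(\wt E_{t_2}^k\cap\Omega_1')|\le C|t_2-t_1|^{1/2}$, passes to the limit for $t_1,t_2$ outside the (countable) set $S$ of possible fattening times, and then squeezes any $t\in S$ between good times to conclude $\H^{n+2}(\{U=t\})=0$. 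Your route is shorter given that Lemma~\ref{nonfaten} is already in place; the paper's route, following \cite{S08}, yields in addition the quantitative $\tfrac12$-H\"older continuity in $t$ of the volumes of the super-level sets, uniformly in the regularization parameter, which is extra information your argument does not produce. Your preliminary remark that $\{u=t\}$ lies in a compact subset of $\Omega_1$ (by continuity up to $\partial\Omega$, where $u=0$, and blow-up at $\partial\Omega_1\setminus\partial\Omega$) is fine but not strictly needed, since Lemma~\ref{nonfaten} is stated on all of $\Omega_1$; the exhaustion variant you mention handles the local Lipschitz issue for Stampacchia's lemma equally well.
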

\begin{proof}
The proof is exactly as that of \cite[Lemma 5.5]{S08}, where the same result is proven in the case $P=0$. We repeat the main step here and sketch the rest of the proof, using the notation of Definition~\ref{U}. Let $0<t_1<t_2$ and $\Omega_1'=\Omega_1\times I$ for some interval $I=(a,b)$. By using the coarea formula, \eqref{MCet2}, H\"older's inequality, Remark~\ref{area bound}, and Lemma~\ref{H+Pk}, we obtain
\[\begin{split}
|\H^{n+2}(\wt E_{t_1}^k\cap\Omega_1')-\H^{n+2}(\wt E_{t_2}^k\cap\Omega_1')|&=\int_{t_1}^{t_2}\int_{\wt\Sigma^k_t\cap\Omega_1'}|H_t^k+P(\nu_k)|\\
&\le C|t_2-t_1|^\frac12,
\end{split}\]
where $C$ is a constant independent on $k$. 

Let now $S=\{t>0:\H^{n+2}\{U=t\}>0\}=\{t>0:\H^{n+1}\{u=t\}>0\}$ and note that for any $t\notin S$ $\wt E_t^k\to\wt E_t$ in $L^1_\loc$ (in the sense that their characteristic functions converge in $L^1_\loc(\Omega_1\times\R)$), because of the local uniform convergence $U_{\e_k}\to U$ in $\Omega_1\times\R$. Thus, for $t_1,t_2\notin S$, the limit of the above estimate yields
\[|\H^{n+2}(\wt E_{t_1}\cap\Omega_1')-\H^{n+2}(\wt E_{t_2}\cap\Omega_1')|\le C|t_2-t_1|^\frac12.\]
For any $t\in S$ (a countable set) we can now pick two sequences of times $t_1^j<t<t_2^j$ for which the above is true and such that both sequences  tend to $t$. This then implies that $\H^{n+2}\{U=t\}=0$.
\end{proof}

\begin{Remark}\label{setconv} Lemma  \ref{nonfat}, along with the uniform convergence $U_{\e_k}\to U$, implies that $\wt E_t^k\to \wt E_t$ in $L^1_\loc(\Omega_1\times\R)$ (i.e. the corresponding characteristic functions converge in $L^1_\loc(\Omega_1\times\R)$, see also proof of \cite[Lemma 5.5]{S08}).
\end{Remark}
We are ready now to show that the minimizing property of $\wt E^k_t$, as presented in  Lemma~\ref{emin}, passes to the limit. More specifically,  we  show  that the sets $\wt E_t$ minimize area plus {\it bulk energy P} on the outside in $\Omega_1\times\R$. The same is then also true for the sets $E_t$ in $\Omega_1$.  

 \begin{Lemma} \label{eminu} For any weak solution  $u\in C^{0,1}(\Omega_1)$ of $(**)$ and any $t\in \R$  the set  $\wt E_t=\{U>t\}$ satisfies  the following minimization property.
\[|\partial^*\wt E_t\cap W|+\int_{W\cap \wt E_t} P(\nu)d\H^{n+2}\le |\partial^*F\cap W|+\int_{W\cap F} P(\nu)d\H^{n+2}\]
for any compact set $W\subset \Omega\times\R$ and any finite perimeter set $F$ with $\wt E_t\subset F$ and $F\setminus \wt E_t\subset W$.  
Here, $P(\nu)=\left(g^{ij}-\nu^i\nu^j\right) K_{ij}$ and $\nu$ is independent of the vertical component with $\nu(x,z)= \nu(x)=\frac{Du}{|Du|}(x)$ $\H^{n+1}$-a.e. on $\Omega_1$, and we use the notation in Definition~\ref{U}.. 

Furthermore, the same minimizing property is satisfied by $E_t$ in $\Omega_1$, that is
\[\begin{split}
|\partial^* E_t\cap W|+\int_{ W\cap  E_t} P(\nu)d\H^{n+1}\le |\partial^* F\cap  W|+ \int_{W\cap  F} P(\nu)d\H^{n+1}
\end{split}\]
for any compact set $W\subset\Omega$ and any finite perimeter set $F$ with $ E_t\subset F$ and $F\setminus E_t\subset W$.
\end{Lemma}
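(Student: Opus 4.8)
The plan is to pass the outward-minimizing property of Lemma~\ref{emin} from the approximating sets $\wt E_t^k$ to the limit $\wt E_t$, the three ingredients being $L^1_{\mathrm{loc}}$-convergence of the bulk densities, lower semicontinuity of perimeter, and a localized cut-and-paste of competitors. First I would record that $P(\nu_k)\to P(\nu)$ in $L^1_{\mathrm{loc}}$: writing $\xi_k=\nabla u_{\e_k}/\sqrt{\e_k^2+|\nabla u_{\e_k}|^2}$ and $\xi=\nabla u/|\nabla u|$, Remark~\ref{ES conv2} gives $\xi_k\to\xi$ strongly in $L^2_{\mathrm{loc}}(\Omega_1)$; since $|\xi_k|,|\xi|\le 1$ this forces $\xi_k^i\xi_k^j\to\xi^i\xi^j$ in $L^1_{\mathrm{loc}}$, and contracting with the bounded tensor $K_{ij}$ gives $P(\nu_k)=(g^{ij}-\xi_k^i\xi_k^j)K_{ij}\to(g^{ij}-\xi^i\xi^j)K_{ij}=P(\nu)$ in $L^1_{\mathrm{loc}}(\Omega_1)$, hence in $L^1_{\mathrm{loc}}(\Omega_1\times\R)$ by $z$-invariance, with uniform bound $(n+2)\lambda$. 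Combined with $\wt E_t^k\to\wt E_t$ in $L^1_{\mathrm{loc}}$ (Remark~\ref{setconv}), this yields $\int_{W\cap\wt E_t^k}P(\nu_k)\to\int_{W\cap\wt E_t}P(\nu)$ for every compact $W$, since $\chi_{\wt E_t^k}P(\nu_k)-\chi_{\wt E_t}P(\nu)=\chi_{\wt E_t^k}(P(\nu_k)-P(\nu))+(\chi_{\wt E_t^k}-\chi_{\wt E_t})P(\nu)\to0$ in $L^1(W)$.

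Given a competitor $F\supset\wt E_t$ with $F\setminus\wt E_t\subset W$, I would choose open sets $W\subset\subset W_0\subset\subset W_1\subset\subset\Omega_1\times\R$, let $\{W_\rho\}_{\rho\in(0,1)}$ foliate the collar $W_1\setminus\overline{W_0}$ by sublevel sets of a Lipschitz function, and use as competitor for $\wt E_t^k$ in Lemma~\ref{emin} the set $F_k:=\wt E_t^k\cup(F\cap W_{\rho_k})$. This is admissible ($\wt E_t^k\subset F_k$ and $F_k\setminus\wt E_t^k\subset W_{\rho_k}\subset\subset W_1$); the newly created interface sits on $\partial W_{\rho_k}$ inside $F\setminus\wt E_t^k\subset W\cup(\wt E_t\setminus\wt E_t^k)$, and since $\partial W_{\rho_k}$ is disjoint from $W$ it meets this set only in $\wt E_t\setminus\wt E_t^k$, whose contribution averages, via the coarea formula over $\rho$, to at most $\H^{n+2}((W_1\setminus W_0)\cap(\wt E_t\setminus\wt E_t^k))\to0$, so $\rho_k$ can be chosen making this interface measure $o(1)$; note $F_k\to F$ in $L^1_{\mathrm{loc}}$. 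One cannot use the global union $F\cup\wt E_t^k$, because the blow-up tail $\wt E_t\setminus\wt E_t^k$ escapes to $z=+\infty$, so its symmetric difference with $\wt E_t^k$ is not compactly contained --- this is the one genuinely new feature compared with the bounded settings of \cite{MS,S08}.

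Plugging $F_k$ into Lemma~\ref{emin} over $\overline{W_1}$, decomposing $|\partial^*F_k\cap W_1|$ across $W_{\rho_k}$, $\partial W_{\rho_k}$, $W_1\setminus\overline{W_{\rho_k}}$, and invoking the submodularity inequality $|\partial^*(\wt E_t^k\cup F)\cap W_{\rho_k}|+|\partial^*(\wt E_t^k\cap F)\cap W_{\rho_k}|\le|\partial^*\wt E_t^k\cap W_{\rho_k}|+|\partial^*F\cap W_{\rho_k}|$, the common term $|\partial^*\wt E_t^k\cap W_1|$ cancels and one is left with $|\partial^*(\wt E_t^k\cap F)\cap W_0|\le|\partial^*F\cap W_1|+\int_{W_1\cap F_k}P(\nu_k)-\int_{W_1\cap\wt E_t^k}P(\nu_k)+o(1)$. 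Letting $k\to\infty$, lower semicontinuity of perimeter applied to $\wt E_t^k\cap F\to\wt E_t\cap F=\wt E_t$ bounds the left side below by $|\partial^*\wt E_t\cap W_0|$, the bulk integrals converge by the first paragraph together with $F_k\to F$, and using that $F=\wt E_t$ (hence $\partial^*F=\partial^*\wt E_t$) on the open set $W_1\setminus W$ one obtains $|\partial^*\wt E_t\cap W|+\int_{W\cap\wt E_t}P(\nu)\le|\partial^*F\cap W|+\int_{W\cap F}P(\nu)+|\partial^*\wt E_t\cap(W_1\setminus W_0)|$; letting $W_0,W_1\downarrow W$ kills the error term, since $|\partial^*\wt E_t|$ is locally finite by Remark~\ref{area bound}. (Throughout one first reduces, by slightly enlarging $W$, to the case $\H^{n+1}(\partial^*F\cap\partial W)=\H^{n+1}(\partial^*\wt E_t\cap\partial W)=0$.)

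The inequality for $E_t$ in $\Omega_1$ then follows by $z$-slicing: for a competitor $F$ of $E_t$ with $F\setminus E_t\subset W_1\subset\subset\Omega_1$, apply the $\wt E_t$-minimality just proved with $W=\overline{W_1}\times[a',b']$ and lifted competitor $\wt E_t\cup((F\setminus E_t)\times(a',b'))$, divide by $b'-a'$, and let $b'-a'\to\infty$ to absorb the two horizontal ``cap'' contributions at $z=a'$ and $z=b'$. The main obstacle throughout is that lower semicontinuity of perimeter points the wrong way for the competitor term; the submodularity identity together with the collar-averaging choice of $\rho_k$ is precisely what converts that defect into a controllable error, and the unbounded blow-up tail $\wt E_t\setminus\wt E_t^k$ is what forces the competitor to be localized rather than taken as a global union.
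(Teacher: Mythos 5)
Your proposal is correct and takes essentially the same route as the paper: it passes the outward minimization of Lemma~\ref{emin} to the limit using the set convergence of Remark~\ref{setconv}, the strong convergence of the unit normals (Lemma~\ref{ES conv}, Remark~\ref{ES conv2}) for the bulk term, lower semicontinuity of perimeter for the area term, a competitor localized as $\wt E_t^k\cup(F\cap\,\cdot\,)$, and the same $\ell\to\infty$ product-slicing to deduce the statement for $E_t$ in $\Omega_1$. The only deviation is technical: where the paper (following Schulze) controls the newly created interface via inner/outer traces on a slightly enlarged fixed $W$ with $\int_{\partial W}|\phi^-_{F\cup\wt E_t^k}-\phi^+_{\wt E_t^k}|\to 0$, you control it by submodularity of perimeter together with a coarea-averaged choice of the cut level $\rho_k$ in a collar $W_1\setminus\overline{W_0}$ --- an equivalent device leading to the same estimate.
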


\begin{proof} The proof follows that of \cite[Lemma 5.6, Corollary 5.7]{S08}, where the same statements are proven in the case when $P=0$. Let $W$, $F$ be as in the statement of the lemma and note that $F\setminus\wt E_t\subset\Omega_1\times\R$. First, note that arguing exactly as in \cite[Lemma 5.6]{S08}, we can assume, by passing to a slightly larger compact set if necessary, that for $W$  the following is true. The boundary  $\partial W$ is smooth, 
$|\partial^*(F\cup \wt E_t^k)\cap\partial W|=|\partial^*(F\cap \wt E_t^k)\cap\partial W|=|\partial^* \wt E_t^k\cap\partial W|=0$ for all $k$
 and $\lim_{k\to\infty}\int_{\partial W} |\phi^-_{F\cup \wt E_t^k}-\phi^+_{\wt E_t^k}|d\H^{n+1}= 0$,
where $\phi^-_{F\cup \wt E_t^k}$ and $\phi^+_{\wt E_t^k}$
are the inner and outer trace of ${F\cup \wt E_t^k}$ and  $\wt E_t^k$ on $\partial W$ (see \cite[Chapter 2]{Giu} and \cite[Section 2.4]{Alex} for definitions of the traces and note that here we also use Remark~\ref{setconv}). Let now $F^k= \wt E_t^k\cup (F\cap W)$. We  then have
\[\begin{split}|\partial^*F^k\cap W|=&\int_{\partial W} |\phi^-_{F\cup \wt E_t^k}-\phi^+_{\wt E_t^k}|d\H^{n+1}+|\partial^*(F\cup \wt E_t^k)\cap W|\\
=&\int_{\partial W} |\phi^-_{F\cup \wt E_t^k}-\phi^+_{\wt E_t^k}|d\H^{n+1}+ |\partial^* \wt E^k_t\cap W|\\
&+ |\partial^* F\cap W|-|\partial^*(F\cap \wt E_t^k)\cap W|,
\end{split}\]
where the second equality above is justified by  arguing as in \cite[(36) of proof Lemma~5.6]{S08}).
By the minimizing property of  $\wt E_t^k$ (since $F^k\supset \wt E_t^k$), we have
\[|\partial^* \wt E^k_t\cap W|-\int_{W\cap (F^k\setminus \wt E^k_t)} P(\nu_k)d\H^{n+2}\le |\partial^* F^k\cap W|,\]
and thus  we  obtain 
\[\begin{split}
 |\partial^* F\cap W|\ge &|\partial^*(F\cap \wt E_t^k)\cap W|- \int_{W\cap (F^k\setminus \wt E^k_t)} P(\nu_k)d\H^{n+2}\\
 &-\int_{\partial W} |\phi^-_{F\cup \wt E_t^k}-\phi^+_{\wt E_t^k}|d\H^{n+1}.
\end{split}\]
Since the last term on the right-hand side vanishes as $k\to \infty$ and $|\partial^* \wt E_t\cap W|=|\partial^*(F\cap \wt E_t)\cap W| \le \lim_{k\to\infty}|\partial^*(F\cap\wt E_t^k)\cap W|$ (by Remark~\ref{setconv} and the lower semi-continuity, see \cite[Theorem 1.9]{Giu} and \cite[Theorem 2.38]{Alex}), it suffices to show that
\[\int_{W\cap (F^k\setminus \wt E^k_t)} P(\nu_k)d\H^{n+2} \stackrel{k\to \infty}{\longrightarrow}\int_{W\cap (F\setminus \wt E_t)} P(\nu)d\H^{n+2}, \]
where $\nu$ is as in the statement of the lemma. To see this, we note that
$W\cap (F^k\setminus \wt E^k_t)=(F\cap W)\setminus\wt E_t^k$ and $ W\cap (F\setminus \wt E_t)=(F\cap W)\setminus \wt E_t$, and 
we  write
\[\begin{split}\int_{W\cap (F^k\setminus \wt E^k_t)} &P(\nu_k)d\H^{n+2}-\int_{W\cap (F\setminus \wt E_t)} P(\nu)d\H^{n+2}
\\
=& \int_{W\cap F\cap(\wt E_t\setminus\wt E_t^k)} P(\nu_k)d\H^{n+2}-\int_{W\cap F\cap(\wt E^k_t\setminus\wt E_t)} P(\nu_k)d\H^{n+2}\\
&+ \int_{W\cap (F\setminus \wt E_t)} P(\nu_k)-P(\nu)d\H^{n+2}.
\end{split}\]
We can see now that the right-hand side of the above equality tends to $0$, as $k\to \infty$, because of the fact that $P$ is bounded, Remark~\ref{setconv} and Lemma~\ref{ES conv}. More specifically, we have the following two observations. First, by Remark~\ref{setconv}, we have
\[\H^{n+2}(W\cap (\wt E_t\setminus \wt E^k_t))\,,\,\, \H^{n+2}(W\cap (\wt E_t^k\setminus \wt E_t))\stackrel{k\to \infty}{\longrightarrow} 0,\]
which implies that the first two terms tend to zero. Second, by Lemma~\ref{ES conv} (see also Remark~\ref{ES conv2}), we  have
\[\int_{W\cap (F\setminus \wt E_t)} \nu_k^i-\nu^id\H^{n+2}\le C(W)\|\nu_k-\nu\|_{L^2(W)} \stackrel{k\to \infty}{\longrightarrow}0,\]
and thus, by writing $\nu_k^i\nu_k^j-\nu^i\nu^j=\nu_k^i(\nu_k^j-\nu^j)+\nu^j(\nu_k^i-\nu^i)$, we have
\[\int_{W\cap (F\setminus \wt E_t)} \nu_k^i\nu_k^j-\nu^i\nu^jd\H^{n+2}\stackrel{k\to \infty}{\longrightarrow}0.\]
Since $P(\nu_k)- P(\nu)= (\nu_k^i\nu_k^j-\nu^i\nu^j) K_{ij}$, this implies that the last term also tends to zero.

Finally, one can easily see that the same minimization property holds for $E_t$ in $\Omega_1$ as follows (cf. \cite[Corollary 5.7]{S08}). Let $W\subset \Omega$ be a compact set and let $F$ be a finite perimeter set such that $E_t\subset F$ and $F\setminus E_t\subset W$. Given any $\ell>0$, let $\wt F=(F\times(-\ell,\ell))\cup \wt E_t$. Using the minimization property of  $\wt E_t$, we have
\[|\partial^*\wt E_t\cap \wt W|+\int_{\wt W\cap \wt E_t} P(\nu)d\H^{n+2}\le |\partial^*\wt F\cap \wt W|+\int_{\wt W\cap \wt F} P(\nu)d\H^{n+2},\]
where $\wt W=W\times[-2\ell, 2\ell]$. This then yields
\[\begin{split}
2\ell|\partial^* E_t\cap W|+2\ell \int_{ W\cap  E_t} P(\nu)d\H^{n+1}\le& 2\ell|\partial^* F\cap  W|+2\H^{n+1}(F\setminus E_t)\\
&+2\ell \int_{W\cap  F} P(\nu)d\H^{n+1}.
\end{split}\]
Dividing by $\ell$ and letting $\ell\to \infty$ provides the required property.
\end{proof}

 We now  define the measures
 \begin{equation*}\label{measdef}
 \mu_t^k=\H^{n+1}\res \wt \Sigma_t^k \text{  and  }\mu_t=\H^{n+1}\res \partial^*\wt E_t,
 \end{equation*}
 where recall that $\wt \Sigma_t^k=\partial(\{(x, z): U_{\e_k}(x,z)>t\})=\graph \left(\frac{u_{\e_k}}{\e_k}-\frac{t}{\e_k}\right)$ (see Definition~\ref{U}).
 Our goal is to show that $\mu_t^k\to \mu_t$ as Radon measures. This is done following the steps in \cite[Section 5]{S08}. We first show that the sets $\partial^* E_t\subset \Sigma_t:=\partial E_t\subset \{x\in \Omega_1:u(x)=t\}$ are equal up to a set of  $\H^n$-measure zero. 
\begin{Lemma}\label{bdrystar} Let  $u\in C^{0,1}(\Omega_1)$ be a weak solution of $(**)$. Then, for  a.e. $t\in[0,\infty)$, $\H^n(\{u=t\}\setminus\partial^*\{u>t\})=0$
\end{Lemma}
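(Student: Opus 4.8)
The plan is to exploit the coarea formula together with the minimization property of the sets $\wt E_t$ (Lemma~\ref{eminu}) and the uniform curvature bound coming from Lemma~\ref{H+Pk}. First I would observe that the statement is one dimension higher than needed: since $\wt E_t = E_t\times\R$ and $\partial^*\wt E_t=\partial^* E_t\times\R$, it is equivalent to show that for a.e.\ $t$, $\H^{n+1}(\{U=t\}\setminus\partial^*\{U>t\})=0$ in $\Omega_1\times I$ for a fixed bounded interval $I$, and then integrate/slice. So I will work with the functions $U_{\e_k}$ and their level sets $\wt\Sigma_t^k$ throughout.

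The key point is that the ``bad set'' $\{u=t\}\setminus\partial^*\{u>t\}$ is exactly the part of the topological boundary $\partial E_t$ that carries no perimeter, i.e.\ the set of points where $E_t$ has density neither $0$ nor $1$ in the measure-theoretic sense but which is $\H^n$-invisible to $\partial^* E_t$; equivalently, it is where the ``interior'' $\{u>t\}$ and ``closure'' $\{u\ge t\}$ differ by more than a perimeter-carrying set. The standard way (as in \cite[Section 5]{S08}) is: (i) by the coarea formula applied to $u$ (using $|\nabla u|\ne 0$ a.e., Lemma~\ref{nonfaten}), for a.e.\ $t$ the level set $\{u=t\}$ is $\H^n$-rectifiable and $\H^n(\{u=t\})<\infty$; (ii) by Lemma~\ref{nonfat}, $\H^{n+1}(\{u=t\})=0$ so $E_t$ has a well-defined characteristic function for every $t$ and $E_t^k\to E_t$ in $L^1_{\loc}$; (iii) by Lemma~\ref{eminu}, $E_t$ minimizes area plus bulk energy from outside, which by the remark following it (Allard-type regularity, cf.\ Remark~\ref{weakrmk}) forces $\H^n(\partial E_t\setminus\partial^* E_t)=0$ wherever the minimization is two-sided, but here it is only one-sided, so one must be more careful. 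The honest route, which is the one I would take, is to pass through the approximating smooth hypersurfaces: for a.e.\ $t$ the convergence $\wt\Sigma_t^k\to\wt\Sigma_t$ together with the area bound (Remark~\ref{area bound}) and the $L^2$ mean-curvature bound (Lemma~\ref{H+Pk}) gives, via Allard's theorem and lower semicontinuity of mass under varifold convergence, that $\H^{n+1}\res\wt\Sigma_t^k$ converges to an integral varifold $V_t$ with $\|V_t\|\ge\H^{n+1}\res\partial^*\wt E_t$; the reverse inequality $\|V_t\|\le\H^{n+1}\res\partial^*\wt E_t$, i.e.\ no mass is lost into the non-reduced part of the boundary, follows from the minimization property of $\wt E_t$ exactly as in \cite[Lemma 5.8 / Lemma 5.9]{S08}. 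Combined with $\partial^*\wt E_t\subset\wt\Sigma_t\subset\{U=t\}$ and $\spt V_t=\overline{\partial^*\wt E_t}$, this pins down $\H^{n+1}(\{U=t\}\setminus\partial^*\wt E_t)=0$ for a.e.\ $t$.

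The main obstacle I anticipate is item (iii): controlling the non-reduced part of the boundary $\partial E_t\setminus\partial^* E_t$ with only a \emph{one-sided} (outward) minimization property. In the mean-curvature-flow setting \cite{S08} this is handled by a clean covering/density argument using the smooth approximators, and the key extra input one needs here is that the bulk-energy term $\int P(\nu)$ is an absolutely continuous perturbation (it is bounded, $|P|\le(n+2)\lambda$), so it does not affect $\H^n$-null sets and all the density estimates go through verbatim with constants depending on $\lambda$. I would therefore follow \cite[proof of Lemma 5.8]{S08} line by line, replacing ``area'' by ``area plus bulk energy $P$'' and invoking Lemma~\ref{emin}, Lemma~\ref{eminu}, Lemma~\ref{H+Pk} and Remark~\ref{area bound} in place of their $P=0$ counterparts; the only genuinely new bookkeeping is tracking the $\lambda$-dependent error terms, which are harmless since we only need an $\H^n$-null conclusion. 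Slicing back down from $\Omega_1\times I$ to $\Omega_1$ (the level sets are products, so this is immediate) then finishes the proof.
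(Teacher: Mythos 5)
There is a genuine gap, and it sits exactly at the point you flag and then wave away. Your concluding step --- ``combined with $\partial^*\wt E_t\subset\wt\Sigma_t\subset\{U=t\}$ and $\spt V_t=\overline{\partial^*\wt E_t}$, this pins down $\H^{n+1}(\{U=t\}\setminus\partial^*\wt E_t)=0$'' --- does not follow. Knowing that the limit varifold carries mass exactly $\H^{n+1}\res\partial^*\wt E_t$ gives no upper bound whatsoever on the measure of the part of the level set that carries \emph{no} perimeter: the set $\{u=t\}\setminus\partial^*E_t$ consists of points where $E_t$ has density $0$ or $1$ (or where no density exists), it is invisible to $\|D\chi_{E_t}\|$ and there is no reason for the approximating graphs $\wt\Sigma^k_t$ to converge to it with any multiplicity --- yet this is precisely the set the lemma is about. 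Moreover, the machinery you propose to import (measure/varifold convergence of $\mu^k_t$ with density bounds $\theta\le 1$ on $\partial^*E_t$, i.e.\ the analogue of \cite[Proposition 5.10]{S08}) is, in this paper, Lemma~\ref{mainconvergence}, which is proved \emph{after} and \emph{using} Lemma~\ref{bdrystar}; and the one ingredient you would need to make it work independently --- a density estimate from the merely one-sided minimization of Lemma~\ref{eminu} --- is exactly the obstacle you acknowledge and leave unresolved. So as written the argument is both incomplete and, relative to the paper's development, circular.

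The actual proof is much more elementary and uses none of this: since $u\in C^{0,1}(\Omega_1)\subset\BV_{\loc}$, compare the coarea formula for Lipschitz functions with the coarea formula for BV functions. Both express $\int_{\Omega_1\cap\{u<T\}}|\nabla u|\,d\H^{n+1}$, the first as $\int_0^T\H^n(\{u=t\})\,dt$ and the second as $\int_0^T\H^n(\partial^*E_t)\,dt$. Since $u$ is continuous, $\partial^*E_t\subset\partial E_t\subset\{u=t\}$, so the integrand $\H^n(\partial^*E_t)$ is pointwise no larger than $\H^n(\{u=t\})$, and equality of the two integrals forces $\int_0^T\H^n(\{u=t\}\setminus\partial^*E_t)\,dt=0$ for every $T$, hence $\H^n(\{u=t\}\setminus\partial^*E_t)=0$ for a.e.\ $t$. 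This is \cite[Lemma 5.9]{S08} --- the reference you cite --- but note that the ``a.e.\ $t$'' in the statement is what makes this soft coarea comparison sufficient; no minimization property, no Allard regularity, no lifting to $\Omega_1\times\R$ and no control of the non-reduced boundary for a \emph{fixed} $t$ is needed.
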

\begin{proof}
This is proven exactly as \cite[Lemma 5.9]{S08}.  Since $u\in C^{0,1}(\Omega)\subset \BV(\Omega)$ we can compare the coarea formula for BV-functions and Lipschitz functions to obtain
\[\int_0^T\H^n(\partial^*E_t)dt=\int_{\Omega_1\cap\{u<T\}}|\nabla u|d\H^{n+1}=\int_0^T\H^n(\{u=t\})dt\]
for any $T>0$, and thus
\[\int_0^T\H^n(\{u=t\}\setminus\partial^* E_t)dt=0,\]
which yields that $\H^n(\{u=t\}\setminus\partial^* E_t)=0$ for a.e. $t\in [0,T]$. Since this is true for all $T>0$, we obtain the result.
\end{proof}

We are now ready to prove the measure convergence.

\begin{Lemma}\label{mainconvergence} Let  $\mu_t^k=\H^{n+1}\res \wt \Sigma_t^k$ and $\mu_t=\H^{n+1}\res \partial^*\wt E_t$ (where we use the notation of Definition~\ref{U}). Then, for a.e  $t>0$  $\mu_t^k\stackrel{k\to\infty}{\longrightarrow}\mu_t$ as Radon measures.
\end{Lemma}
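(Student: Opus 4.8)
The plan is to follow the strategy of \cite[Section 5]{S08}, proving the measure convergence $\mu_t^k\to\mu_t$ in two halves: lower semicontinuity, which is essentially automatic, and the matching upper bound, which is where the minimization property of Lemma~\ref{eminu} must be brought to bear. First I would record that, by Remark~\ref{area bound} (the local uniform area bound coming from Lemma~\ref{emin}), for any fixed compact $W\subset\Omega\times\R$ the measures $\mu_t^k$ have uniformly bounded mass on $W$, so after passing to a subsequence $\mu_t^k\rightharpoonup\sigma_t$ for some Radon measure $\sigma_t$. Since $\wt E_t^k\to\wt E_t$ in $L^1_{\loc}(\Omega_1\times\R)$ for \emph{all} $t>0$ by Remark~\ref{setconv} (a consequence of the non-fattening Lemma~\ref{nonfat}), lower semicontinuity of perimeter gives $\mu_t=\H^{n+1}\res\partial^*\wt E_t\le\sigma_t$ as measures. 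The whole content is therefore the reverse inequality $\sigma_t(W)\le\mu_t(W)$ for suitable $W$, which combined with $\mu_t\le\sigma_t$ and a standard exhaustion argument will force $\sigma_t=\mu_t$, pin down the limit independently of the subsequence, and hence give full convergence $\mu_t^k\to\mu_t$.

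For the upper bound I would argue, as in \cite[Lemma 5.10 / Proof of measure convergence]{S08}, by a competitor/comparison construction in the one-dimension-higher product. Fix $t$ in the full-measure set of ``good'' times where Lemma~\ref{bdrystar} applies (so $\H^n(\{u=t\}\setminus\partial^*E_t)=0$, equivalently $\H^{n+1}(\wt\Sigma_t\setminus\partial^*\wt E_t)=0$) and where the $L^1_{\loc}$ convergence of $\wt E_t^k$ holds. Choose a compact $W=\wt W$ with smooth boundary such that the boundary traces behave well, exactly as arranged in the proof of Lemma~\ref{eminu}: $|\partial^*\wt E_t^k\cap\partial W|=0$ for all $k$ and the trace discrepancies on $\partial W$ vanish in the limit. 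Now use the minimizing property of $\wt E_t^k$ from Lemma~\ref{emin} against the competitor $F^k=\wt E_t^k\cup(\wt E_t\cap W)$ (or, more symmetrically, compare $\wt E_t^k$ with $(\wt E_t^k\setminus W)\cup(\wt E_t\cap W)$, modified near $\partial W$ as in \cite{S08} so that the perimeters on $\partial W$ match). This yields
\[
|\partial^*\wt E_t^k\cap W|\le|\partial^*\wt E_t\cap W|+\int_{W}\!\big|P(\nu_k)\big|\,\chi_{\wt E_t^k\triangle\wt E_t}\,d\H^{n+2}+\text{(boundary error on }\partial W).
\]
Since $P$ is bounded (as $K$ is bounded), $\H^{n+2}(W\cap(\wt E_t^k\triangle\wt E_t))\to0$ by Remark~\ref{setconv}, and the $\partial W$-error tends to zero by the choice of $W$, we get $\limsup_k\mu_t^k(W)=\limsup_k|\partial^*\wt E_t^k\cap W|\le|\partial^*\wt E_t\cap W|=\mu_t(W)$. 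Together with $\sigma_t(W^{\mathrm{int}})\le\liminf_k\mu_t^k(W)$ this gives $\sigma_t(W^{\mathrm{int}})\le\mu_t(W)$, and letting $W$ shrink to $\ov{W^{\mathrm{int}}}$ along a sequence with $\mu_t(\partial W)=0$ (possible for a.e.\ choice of the defining parameter) yields $\sigma_t\le\mu_t$ locally, hence $\sigma_t=\mu_t$.

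The main obstacle, and the place that needs the most care, is the boundary-error term on $\partial W$ in the competitor construction: one must choose the comparison region so that gluing $\wt E_t^k$ to $\wt E_t$ across $\partial W$ does not create spurious perimeter, and this requires the trace-convergence statements on $\partial W$ exactly as set up in the proof of Lemma~\ref{eminu} (using $\lim_k\int_{\partial W}|\phi^-_{\wt E_t\cup\wt E_t^k}-\phi^+_{\wt E_t^k}|\,d\H^{n+1}=0$ and the vanishing of $\partial^*\wt E_t^k\cap\partial W$). A secondary subtlety is that the argument is only claimed for a.e.\ $t$: one loses the countable set of fattening-type times and the null set excluded by Lemma~\ref{bdrystar}, but that is harmless for the stated conclusion. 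Once $\sigma_t=\mu_t$ is established for one subsequence, the limit is independent of the subsequence, so the original sequence $\mu_t^k$ converges to $\mu_t$, completing the proof; aside from the $\partial W$ bookkeeping this is a verbatim adaptation of \cite[Section 5]{S08} with the extra bounded term $P(\nu_k)$ carried along exactly as in Lemma~\ref{eminu}.
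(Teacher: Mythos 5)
Your setup (subsequential limits via the local area bound, restriction to a.e.\ $t$ using Lemma~\ref{nonfat}/Remark~\ref{setconv} and Lemma~\ref{bdrystar}, and the lower bound $\mu_t\le\sigma_t$ by lower semicontinuity) matches the paper, but the upper bound is where your argument breaks down, and it is precisely the hard half. Lemma~\ref{emin} is a \emph{one-sided, outward} minimization: the competitor must satisfy $\wt E_t^k\subset F$. Your ``more symmetric'' competitor $(\wt E_t^k\setminus W)\cup(\wt E_t\cap W)$ is therefore inadmissible. The admissible competitor $F^k=\wt E_t^k\cup(\wt E_t\cap W)$ does not yield the inequality you write. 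Inside $W$ one has, up to $\H^{n+1}$-null sets,
\[
\partial^*F^k\cap W\ \subset\ \bigl(\partial^*\wt E_t\cap W\bigr)\ \cup\ \bigl(\partial^*\wt E_t^k\cap W\cap \wt E_t^{(0)}\bigr),
\]
where $\wt E_t^{(0)}$ is the set of density-$0$ points of $\wt E_t$, so the comparison only gives
\[
|\partial^*\wt E_t^k\cap W|\ \le\ |\partial^*\wt E_t\cap W|\ +\ \H^{n+1}\bigl(\partial^*\wt E_t^k\cap W\cap \wt E_t^{(0)}\bigr)\ +\ o(1).
\]
The middle term—the area of the approximating boundary lying outside the limit set—is exactly the quantity you have no control over: taking a union can only erase boundary \emph{inside} the added region, never the possibly folded-over sheets of $\wt\Sigma_t^k$ outside $\wt E_t$. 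Outward minimization at fixed scale excludes thin slots of the complement but not thin protruding sheets of $\wt E_t^k$, so your inequality is perfectly consistent with the limit measure having multiplicity $2$ on $\partial^*\wt E_t$ (LHS $\approx 2\mu_t(W)$, RHS $\approx \mu_t(W)+\mu_t(W)+o(1)$), and ruling out such multiplicity is the whole content of the lemma.

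The paper (following \cite[Proposition 5.10]{S08}) handles this by a density/blow-up argument rather than a single global swap: Claim 1 localizes $\spt\mu$ on $\{u=t\}\times\R$; Claim 2 uses Lemma~\ref{emin} with $F=\wt E_t^k\cup B_\rho(x)$ to get the uniform bound $\mu(\ov B_\rho)\le \H^{n+1}(\partial B_\rho)+C(K)\H^{n+2}(B_\rho)$, hence absolute continuity and a Radon--Nikodym density $\theta$ with $\mu=\mu_t\res\theta$; Claim 3 gives $\theta\ge1$ by lower semicontinuity (your first half); and Claim 4 gives $\theta\le1$ by rescaling around $\H^{n+1}$-a.e.\ point of $\partial^*\wt E_t$ (via geodesic normal coordinates) and applying the \emph{outward} minimization at blow-up scale with a ``fill everything below a thin slab'' competitor, which replaces all sheets accumulating on the limiting hyperplane by a single disc, at a cost controlled by the extra $P(\nu_k)$ term of order $\e$. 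That rescaling step is the missing idea in your proposal; without it (or an argument of comparable strength, e.g.\ a two-sided minimization property, which the level sets are not known to have here) the upper bound $\theta\le1$, and hence the identification $\sigma_t=\mu_t$, does not follow.
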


\begin{proof}
The proof is almost identical to that of \cite[Proposition 5.10]{S08}. We go through the proof here pointing out the differences in our case. To fit our notation, one has to replace $i$, $\e_i$, $\Omega$, $E_t^i$, $N_t^i$, $E'_t$ and $\Gamma_t$  of \cite[Proposition 5.10]{S08} by $k$, $\e_k$, $\Omega_0$, $\wt E_t^k$, $\wt E_t$, $\wt \Sigma_t^k$ and $\Sigma_t$ respectively.
Note first that, by Lemma~\ref{bdrystar}, for almost every $t$   $\mu_t= \H^{n+1}\res \wt \Sigma_t$, where recall that $\wt \Sigma_t=\partial \wt E_t=\Sigma_t\times\R=\partial E_t\times\R$.
Fix a $t>0$, so that the above is true. By the minimizing property (Lemma~\ref{emin}, Remark~\ref{area bound}), $|\wt \Sigma^k_t|$ are locally uniformly bounded and thus, after passing to a subsequence,  $\mu_t^k\stackrel{k\to\infty}{\longrightarrow}\mu$, where $\mu$ is a Radon measure in $\Omega_0\times\R$ (note that here we keep the same notation for the subsequence, whereas in \cite[Proposition 5.10]{S08} the subsequence is denoted by $\{\mu_t^{i_j}\}_j\subset\{\mu_t^i\}_i$, so to fit our notation one has to further replace $i_j$ of \cite[Proposition 5.10]{S08} by $k$).\\
{\bf Claim 1:} $\spt \mu\subset\{u=t\}\times\R$.\\
The proof of Claim 1 is identical to that of \cite[Claim 1 of proof of Proposition 5.10]{S08}.\\
{\bf Claim 2:} For $B_\r(x)\subset\subset\Omega_0\times\R$
\[\mu(\ov B_\r(x))\le \H^{n+1}(\partial B_\r(x))+C(K)\H^{n+2}(B_\r(x)).\]
The proof of Claim 2 is the same as  that of \cite[Claim 2 of proof of Proposition 5.10]{S08}, with the only difference being the bound for $\mu^{k}_t(B_\r(x))=\H^{n+1}(\partial^* \wt E_t^k\cap B_\r(x))$. In particular, here using Lemma~\ref{emin} with $F= \wt E_t^k\cup B_\r(x)$,  we obtain
\[\mu^{k}_t(B_\r(x))+\int_{\wt E_t^k\cap B_\r(x)} P(\nu_k)d\H^{n+2}\le  \H^{n+1}( \partial B_\r(x))+\int_{ B_\r(x)} P(\nu_k)d\H^{n+2},\]
which yields
\[ \mu^{k}_t(B_\r(x))\le \H^{n+1}( \partial B_\r(x))+C(K)\H^{n+2}( B_\r(x)).\]

We have then, as in  \cite[Claim 2 of proof of Proposition 5.10]{S08}, that $\mu$ is absolutely continuous with respect to  the $\H^{n+1}$-measure (since the $\H^{n+2}$-measure  is absolutely continuous with respect to  the $\H^{n+1}$-measure). Thus, by the Radon-Nikodym theorem, Claim 1, and Lemma~\ref{bdrystar}, we obtain that  there exists a function $\theta\in L^\infty(\Sigma_t\times\R, \H^{n+1})$ such that
\begin{equation}\label{mumut}
\mu=(\H^{n+1}\res(\partial^*E_t\times\R))\res\theta=\mu_t\res\theta,
\end{equation}
where recall that $\Sigma_t=\partial E_t$.

\noindent{\bf Claim 3:} $\theta\ge 1$ $\H^{n+1}$-a.e. on $\Sigma_t\times\R$.\\
The proof of Claim 3 is identical to that of \cite[Claim 3 of proof of Proposition 5.10]{S08}.\\
%
{\bf Claim 4:} $\theta\le 1$ $\H^{n+1}$-a.e. on $\partial^*E_t$.\\
The proof of Claim 4 is the same as that of \cite[Claim 4 of proof of Proposition 5.10]{S08} with the only difference being the way we obtain the  bound for $\mu_\l^{k}(B_1)= \l^{-(n+2)}\mu_t^{k}(\l B_1)$ (denoted as $\mu_\l^{i_j}$ in \cite[line 20, page 221]{S08}). In particular, here, one has to use the  minimizing  property of $E_t^k$ given in Lemma~\ref{emin} (whereas in \cite{S08} $E_t^k$ are minimizing area on the outside). This, however, does not change the argument as it only changes the bound by a term of order $\e$. More specifically, the term $\sup_{S_{2\e}\cap B_1}|P(\nu_k)||S_{2\e}\cap B_1|$ should be added to the bound. Here, $P(\nu_\k)$ is the term appearing in Lemma~\ref{emin} and $S_{2\e}=\{x\in \R^{n+2}:|x_{n+2}|\le 2\e\}$ as in \cite{S08}. We remark also that in the proof of this claim, one uses ``rescalings'' of sets in $M\times\R$ and of the measures $\mu_t^k, \mu_t$, which are defined via the exponential map. In particular one makes the identifications
\[(B_\r(x), g)\simeq(B_\r(\vec 0), \exp^*_x g)\simeq (B_\r(0), \hat g_{ij}^\phi)\]
as Riemannian manifolds, where $(B_\r(x), \phi)$ are
the geodesic normal coordinates that correspond to the identification $(T_pM, g(p))\simeq(R^{n+2}, \langle\cdot,\cdot\rangle)$ as Hilbert spaces and $\hat g_{ij}^\phi=g_{ij}^\phi\circ\phi^{-1}$, where $\hat g_{ij}^\phi$
are the components of $g$ in geodesic normal coordinates. These identifications allow us to reduce the proof to the case that $M=\R^{n+1}$.  We remark also that, since $\exp^{-1}$ is an isometry, the minimizing property given in Lemma~\ref{emin} and Claim 2 (both of which are used in the proof of this claim) are preserved under these identifications. For a detailed discussion and proofs of these facts see \cite{Alex}.

 Finally, Claim 3 and Claim 4 imply that $\theta(x)=1$ $\H^{n+1}$-a.e. on $\Sigma_t\times\R$. Thus, the limit measure $\mu$ does not depend on the subsequence and thus the whole sequence converges $\mu_t^k\to \mu_t$.
\end{proof}

Having established the measure convergence $\mu_t^k\stackrel{k\to\infty}{\longrightarrow}\mu_t$, or  $\H^{n+1}\res \wt \Sigma_t^k\to \H^{n+1}\res \partial^*\wt E_t= \H^{n+1}\res (\Sigma_t\times\R)$,
in Lemma~\ref{mainconvergence}, we would like to study now the limit of the measures $\mu_t=\H^{n+1}\res \Sigma_t\times\R$ as $t\to\infty$.

By Lemma~\ref{H+Pk}, we have the interval $I_0=(-1, 1)\subset \R$
\begin{equation}\label{24I0}
\int_{0}^{\infty}\int_{\wt\Sigma^{k}_t\cap (\Omega_0\times I_0)}|H^k_t+ P(\nu_k)|^2d\H^{n+1}dt\le C(\Omega, \Omega_0, K, n), 
\end{equation}
where $H^k_t$ is the mean curvature of $\wt\Sigma^{k}_t$, $P(\nu_k)=\left(g^{ij}-\nu^i_{k}\nu^j_{k}\right) K_{ij}$ and $\nu_{k}$ is the upward pointing unit normal to $\wt\Sigma^{k}_t$, as in Definition~\ref{U}. Hence, by Fatou's lemma, for almost every $t\in [0,\infty)$
\[
\begin{split}
\liminf_k&\int_{\wt\Sigma^{k}_t\cap (\Omega_0\times I_0)}|H^k_t+ P(\nu_k)|^2d\H^{n+1}\\
&=\liminf_k\int_{\Omega_0\times I_0}|H^k_t+ P(\nu_k)|^2d\mu_t^k<\infty. 
\end{split}
\]
Considering such a $t$, we conclude that 
there exists a subsequence $\{k_i\}_i$ (depending on $t$) such that
\begin{equation}\label{H+Pki}
\sup_{i\ge 0}\int_{\Omega_0\times I_0}|H_t^{k_i}+P(\nu_{k_i})|^2d\mu_t^{k_i}\le C.
\end{equation}
By \eqref{H+Pki}  and the triangle inequality, we obtain that for all $i\in\N$ and any compact subset $W\subset\Omega_0\times I_0$
\begin{equation}\label{Ht2bound}
\int_{(\Omega_0\times I_0)\cap W}|H_t^{k_i}|^2d\mu_t^{k_i}\le C( \Omega, K, n, W),
\end{equation}
where we have also used that $|P(\nu_{k_i})|$ is bounded and that, by Remark~\ref{area bound},  the graphs  $\wt \Sigma_t^{k_i}$ have uniformly bounded area in $W$ so that $\int_{(\Omega_0\times I_0)\cap W}d\mu_t^{k_i}\le C=C(|\Omega|, |\partial\Omega|,  \l)$, where  $\lambda=\max_i\{|\l_i|, \l_i\text{  eigenvalue of  } K\}$.

We have shown thus that $\wt\Sigma_t^{k_i}$
have locally uniformly  bounded in $L^2$ first variation in $\Omega_0\times I_0$, and since they also have locally uniformly bounded area,
we can apply the varifold compactness theorem of Allard \cite{Al}, which yields that,  after passing to a further subsequence, $\wt\Sigma_t^{k_i}$  converge in the varifold sense to an $(n+1)$-dimensional integral varifold. Since, by Lemma~\ref{mainconvergence}, $\mu_t^k\to \mu_t=\H^{n+1}\res(\Sigma_t\times\R)$,  we obtain that for a.e. $t>0$
\begin{equation}\label{varconv}
\wt\Sigma^{k_i}_t\to \Sigma_t\times I_0=\Sigma_t\times (-1,1)\,\,\,\text{in   }\Omega_0\times I_0=\Omega_0\times(-1,1),
\end{equation}
in the sense of varifolds, where $\Sigma_t\times I_0$ is an $(n+1)$-dimensional rectifiable unit density varifold.
The varifold convergence and ~\eqref{Ht2bound} implies that $\Sigma_t\times I_0$ carries a generalized mean curvature vector $\vec H_t$ and $\mu^{k_i}_t\res H_t^{k_i}\nu_{k_i}\to\mu_t\res \vec H_t$ as vector valued Radon measures.  By the lower semicontinuity of the first variation and \eqref{Ht2bound}, we have
\begin{equation}\label{Hlimit}
\int_{\Omega_0\times I_0}|\vec H_t|^2d\mu_t\le \liminf_i\int_{\Omega_0\times I_0}|H^{k_i}_t|^2d\mu^{k_i}_t\le C=C(\Omega, \Omega_0,K,n).
\end{equation}
Since $\Sigma_t\times I_0$ is a rectifiable unit density varifold, $\vec H_t$ is perpendicular to $\Sigma_t\times I_0$ $\H^{n+1}$-a.e. (see \cite[Chapter 5]{B}).

We further have that $\wt E_t^{k_i}\to \wt E_t$ as finite perimeter sets (see Remark~\ref{setconv}), and recall that 
$\partial \wt E^{k_i}_t=\partial^*  \wt E^{k_i}_t=\wt\Sigma^{k_i}_t$ and  $\H^{n+1}((\Sigma_t\times\R)\setminus\partial^*\wt E_t)=0$ (Lemma~\ref{bdrystar}).
This implies that $\mu^{k_i}_t\res\nu_{k_i}\to\mu_t\res  \nu$ as vector valued Radon measures, where $\nu$ is the measure theoretic outer pointing unit normal to $\wt E_t$. Recall now that $u\in C^{0,1}(\Omega_1)$ and by Lemma~\ref{nonfaten}, Lemma~\ref{bdrystar} and the coarea formula for lipschitz functions (which imply that for a.e. $t\ge0$ $|\nabla u|\ne 0$ $\H^{n+1}$-a.e. on $\Sigma_t\times\R$) we have that for almost every $t\ge 0$ $\nu=\nu(x,z)=\frac{\nabla u(x)}{|\nabla u(x)|}$ $\H^{n+1}$-a.e. on $\Sigma_t\times\R$ (cf. Lemma~\ref{ES conv}). Note also that for the generalized mean curvature vector $\vec H_t$, as above, we obtain
 $\vec H_t= H_t\nu$ $\H^{n+1}$-a.e. on $\Sigma_t\times I_0$.  
The convergence  $\mu^{k_i}_t\res\nu_{k_i}\to\mu_t\res  \nu$,  along with the measure convergence $\mu^{k_i}_t\to\mu_t$ (Lemma~\ref{mainconvergence} or \eqref{varconv}), implies, using the Reshetnyak continuity  \cite[Theorem 2.39]{AFP}, that $\mu^{k_i}_t\res  P(\nu_{k_i})\nu_{k_i}\to\mu_t\res  P(\nu) \nu$ as vector valued Radon measures. 
Finally, this last convergence, along with $\mu^{k_i}_t\res H_t^{k_i}\nu_{k_i}\to\mu_t\res \vec H_t=\mu_t\res H_t\nu$, and using the lower semicontinuity, yields
\begin{equation}\label{H+Plimit}
\begin{split}
 \int_{\Omega_0\times I_0}|H_t&+P(\nu)|d\mu_t  \le\liminf_i\int_{\Omega_0\times I_0}|H^{k_i}_t+ P(\nu_{k_i})|d\mu_t^{k_i}.
  \end{split}
\end{equation}

Recall that this holds for a.e. $t\in[0,+\infty)$. Hence, using \eqref{H+Plimit}, Fatou's lemma, \eqref{24I0} and the uniform area bounds of Remark~\ref{area bound},  we obtain
\begin{equation*} \int_0^\infty\int_{\Omega_0\times I_0}|H_t+P(\nu)|d\mu_tdt\le C(\Omega, \Omega_0, K, n).
 \end{equation*}
 Putting everything together we have the following (cf. \cite[Theorem 5.11]{S08})
\begin{Theorem}\label{mainconvii}
Let  $\mu_t^k=\H^{n+1}\res \wt \Sigma_t^k$ and $\mu_t=\H^{n+1}\res \partial^*\wt E_t$ (where we use the notation of Definition~\ref{U}). Then, for a.e  $t\ge 0$  there exists a subsequence $\{k_i\}_i$ (depending on $t$) such that 
\[\wt\Sigma^{k_i}_t\to \Sigma_t\times (-1,1)\,\,\,\text{in  }\Omega_0\times(-1,1),
\]
as varifolds, where $\Sigma_t\times (-1,1)$ is a rectifiable unit density varifold that carries a generalized mean curvature vector $\vec H_t= H_t\nu$, where $\nu=\nu(x,z)=\frac{\nabla u(x)}{|\nabla u(x)|}$ $\H^{n+1}$-a.e. on $\Sigma_t\times(-1,1)$ (and recall that $u\in C^{0,1}(\Omega_1)$ is the weak solution of $(**)$ as in Definition~\ref{U}). Furthermore, we have
\begin{equation*} 
\int_{\Omega_0\times (-1,1)}|H_t|^2d\mu_t\le C(\Omega, \Omega_0, K, n)
\end{equation*}
and
\[
\int_0^\infty\int_{\Omega_0\times (-1,1)}|H_t+P(\nu)|d\mu_tdt\le C(\Omega, \Omega_0, K, n)
\]
where $P(\nu)=(g^{ij}-\nu^i\nu^j)K_{ij}$.
\end{Theorem}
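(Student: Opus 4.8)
The plan is to assemble Theorem~\ref{mainconvii} essentially as a corollary of the machinery already established: the measure convergence $\mu_t^k\to\mu_t$ (Lemma~\ref{mainconvergence}), the uniform first-variation bound coming from Lemma~\ref{H+Pk} and Remark~\ref{H+Pk rmk}, the finite-perimeter convergence $\wt E_t^k\to\wt E_t$ (Remark~\ref{setconv}, via the non-fattening Lemma~\ref{nonfat} and Lemma~\ref{bdrystar}), and Allard's compactness theorem for integral varifolds. Nothing new needs to be proven beyond carefully organizing these ingredients and justifying the passage to the limit in the curvature terms; indeed the preceding pages of the excerpt already carry out the argument in full, so the ``proof'' of the theorem is merely a pointer to that discussion.

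Concretely, I would proceed as follows. First, fix $t>0$ in the full-measure set for which all of the following hold simultaneously: the conclusion $\mu_t=\H^{n+1}\res(\Sigma_t\times\R)$ of Lemma~\ref{bdrystar}, the measure convergence $\mu_t^k\to\mu_t$ of Lemma~\ref{mainconvergence}, the finiteness $\liminf_k\int_{\Omega_0\times I_0}|H_t^k+P(\nu_k)|\,d\mu_t^k<\infty$ obtained from \eqref{24I0} and Fatou's lemma, and $|\nabla u|\neq0$ $\H^{n+1}$-a.e. on $\Sigma_t\times\R$ (from Lemma~\ref{nonfaten}, Lemma~\ref{bdrystar} and the coarea formula). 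Second, extract the subsequence $\{k_i\}$ realizing that $\liminf$ and satisfying the uniform bound \eqref{H+Pki}; combined with boundedness of $P(\nu_{k_i})$ and the uniform local area bound of Remark~\ref{area bound} this gives the uniform local first-variation bound \eqref{Ht2bound}. Third, apply Allard's compactness theorem \cite{Al} to $\wt\Sigma_t^{k_i}$ to pass (along a further subsequence) to an $(n+1)$-dimensional integral varifold limit, whose underlying measure must be $\mu_t=\H^{n+1}\res(\Sigma_t\times\R)$ by Lemma~\ref{mainconvergence}; unit density and rectifiability are part of the Allard conclusion together with the identification of the limit measure, so $\Sigma_t\times I_0$ is a rectifiable unit density varifold carrying a generalized mean curvature vector $\vec H_t$. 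Fourth, identify $\vec H_t$: by lower semicontinuity of the first variation and \eqref{Ht2bound} one gets \eqref{Hlimit}; perpendicularity of $\vec H_t$ to the (unit density, rectifiable) varifold forces $\vec H_t=H_t\nu$ with $\nu=\nabla u(x)/|\nabla u(x)|$ $\H^{n+1}$-a.e., using the perimeter convergence $\wt E_t^{k_i}\to\wt E_t$ to see $\mu_t^{k_i}\res\nu_{k_i}\to\mu_t\res\nu$. Fifth, Reshetnyak continuity \cite[Theorem 2.39]{AFP} upgrades this to $\mu_t^{k_i}\res P(\nu_{k_i})\nu_{k_i}\to\mu_t\res P(\nu)\nu$, and combining with $\mu_t^{k_i}\res H_t^{k_i}\nu_{k_i}\to\mu_t\res H_t\nu$ and lower semicontinuity of total variation yields \eqref{H+Plimit}. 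Finally, integrate \eqref{H+Plimit} in $t$, apply Fatou's lemma, and invoke Remark~\ref{H+Pk rmk} (with the interval $I_0=(-1,1)$, giving $|I_0|=2$) to conclude $\int_0^\infty\int_{\Omega_0\times(-1,1)}|H_t+P(\nu)|\,d\mu_t\,dt\le 2|\Omega|$.

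The only genuinely delicate point — and the one I would flag as the main obstacle — is the transfer from the unit-density rectifiable limit having a \emph{generalized} mean curvature vector to knowing that this vector is precisely $H_t\nu$ with $\nu=\nabla u/|\nabla u|$, i.e., that the varifold limit and the finite-perimeter (reduced-boundary) limit see the \emph{same} normal and that no mass is lost to higher-multiplicity sheets. This is exactly where one needs Lemma~\ref{bdrystar} (so that $\partial^*\wt E_t$ and $\Sigma_t\times\R$ agree up to $\H^{n+1}$-null sets), Lemma~\ref{nonfaten} together with the coarea formula (so that $|\nabla u|\ne0$ a.e.\ on the relevant level set and hence $\nu$ is well-defined $\H^{n+1}$-a.e.), and the unit-density conclusion of Lemma~\ref{mainconvergence}'s Claims 3 and 4 (so that the varifold limit has multiplicity one, forcing perpendicularity and the identification of $\vec H_t$). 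Once those are in hand the Reshetnyak step is a soft measure-theoretic continuity statement, and the final integration is immediate.

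Since all of this was in fact carried out in the paragraphs immediately preceding the theorem, the actual proof I would write is short:

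\begin{proof}
This has been proven in the discussion preceding the statement of the theorem: the varifold convergence and the identification of the limit varifold, together with the structure of its generalized mean curvature vector, are established in \eqref{H+Pki}--\eqref{varconv} and the paragraph following \eqref{varconv}, using Allard's compactness theorem \cite{Al}, Lemma~\ref{mainconvergence}, Lemma~\ref{bdrystar}, Lemma~\ref{nonfaten} and Remark~\ref{setconv}. The estimate on $\int_0^\infty\int_{\Omega_0\times(-1,1)}|H_t+P(\nu)|\,d\mu_t\,dt$ follows from \eqref{H+Plimit} (itself a consequence of Reshetnyak continuity \cite[Theorem 2.39]{AFP} and lower semicontinuity of the total variation), Fatou's lemma and Remark~\ref{H+Pk rmk} with $|I_0|=2$.
\end{proof}
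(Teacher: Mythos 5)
Your proposal is correct and follows essentially the same route as the paper, which itself proves Theorem~\ref{mainconvii} precisely through the discussion preceding its statement: fixing a good $t$, extracting the subsequence realizing the $\liminf$ as in \eqref{sameliminf}, applying Allard's compactness with the first-variation bound \eqref{Ht2bound}, identifying the limit via Lemma~\ref{mainconvergence} and Lemma~\ref{bdrystar}, passing the $P$-term to the limit by Reshetnyak continuity, and concluding with Fatou's lemma and Remark~\ref{H+Pk rmk}. You also correctly isolate the genuinely delicate point (unit density and the identification $\vec H_t=H_t\nu$ with $\nu=\nabla u/|\nabla u|$), which is handled exactly as you indicate.
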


We note that, because of the product structure of the varifold $\Sigma_t\times (-1,1)$ in Theorem~\ref{mainconvii},  for the $n$-dimensional rectifiable unit density varifolds $\Sigma_t$ we have
\begin{equation}
\label{H+Ptlimi} 
\begin{split}
\int_0^\infty\int_{\Omega}|H_t+P(\nu)|d\mu_t&=\int_0^\infty\int_{\Omega_0}|H_t+P(\nu)|d\mu_t\le C(\Omega, \Omega_0, K, n), \\
\int_{\Omega_0\times (-1,1)}|H_t|^2d\mu_t&\le C(\Omega, \Omega_0, K, n),
\end{split}
\end{equation}
where now $\mu_t=\H^{n}\res\Sigma_t$,  $\vec H_t=H_t \nu$ is the generalized mean curvature vector of $\Sigma_t$ and for almost every $t\ge0$ $\nu=\nu(x)=\frac{\nabla u}{|\nabla u|}(x)$ $\H^n$-a.e. on $\Sigma_t$. (We keep the same notation, as from now on we will concentrate only on $M$ and  forget about the product structure $M\times\R$, and therefore there will not be any confusion). We now want to study $\Sigma_t$ as $t\to \infty$ and show that they converge, as finite perimeter sets,  to a generalized MOTS, as in Definition~\ref{weakMOTS}.

Estimate \eqref{H+Ptlimi} allows us to pick a sequence of times $t_i\uparrow \infty$ such that
\begin{equation}\label{to0}
\lim_{i\to\infty}\int_{\Omega}|H_{t_i}+P(\nu)|d\mu_{t_i}=0.
\end{equation}
By the minimizing property, Lemma~\ref{eminu} (see also Remark~\ref{area bound}), $|\Sigma_{t_i}|$ are uniformly bounded  (recall that $\H^n(\Sigma_{t_i}\setminus\partial^*E_{t_i})=0$ by Lemma~\ref{bdrystar}) and thus, after passing to a subsequence, $\mu_{t_i}\to \mu_\infty$, where $\mu_\infty$ is a Radon measure in $\Omega$. Furthermore, considering $E_{t_i}$ as finite perimeter sets and using the compactness for such sets, we obtain that, passing to a further subsequence, $E_{t_i}\to E_\infty$ (that is $\chi_{E_{t_i}}\to \chi_{E_\infty}$ with respect to the $L^1(\Omega)$ norm), where $E_\infty$ is a finite perimeter set in $\Omega$.
 Moreover, since $D\chi_{E_\infty}=(\H^{n}\res\partial^* E_\infty)\res \nu_\infty$ as vector valued measures, where $\nu_\infty$ is the measure theoretic outer pointing unit normal to $E_\infty$  (see for example \cite[Section 5.7]{EG92}), we have the convergence $\mu_{t_i}\res \nu\to(\H^n\res\partial^* E_\infty)\res\nu_\infty$. (The definitions and the theorems used in relation with the finite perimeter sets can be found for example in  \cite[Chapters 1 and 3]{Giu}, see also \cite{Alex} for the extensions of these results for finite perimeter sets in a manifold).  We claim now that one can argue as in Lemma~\ref{mainconvergence} to show that 
 $\mu_\infty\res\partial^*E_\infty=\H^n \res\partial^*E_\infty$. In particular we have the following
 \begin{Lemma}\label{mainconvII}
 Assume that $\mu_{t_i}=\H^n\res\Sigma_{t_i}\to\mu_\infty$ as Radon measures and $E_{t_i}\to E_\infty$ as finite perimeter sets (where we use the notation of Definition~\ref{U}). Then 
 \begin{equation}\label{muinfty}
\mu_\infty\res\partial^*E_\infty=\H^{n}\res \partial^* E_\infty.
\end{equation}
 \end{Lemma}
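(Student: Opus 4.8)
The plan is to repeat the proof of Lemma~\ref{mainconvergence}, now in $M$ instead of $M\times\R$ and with the sets $E_{t_i}$ and the measures $\mu_{t_i}$ in place of the translating graphs and their area measures, and to observe that only the conclusions $\theta\ge1$ and $\theta\le1$ for the Radon--Nikodym density on $\partial^*E_\infty$ are needed. First, in choosing the times $t_i\uparrow\infty$ of \eqref{to0} one may additionally require that the conclusion of Lemma~\ref{bdrystar} holds at each $t_i$; since $u$ is continuous we have $\partial E_{t_i}\subset\{u=t_i\}$, hence $\H^n(\Sigma_{t_i}\setminus\partial^*E_{t_i})=0$ and therefore $\mu_{t_i}=\H^n\res\partial^*E_{t_i}=|D\chi_{E_{t_i}}|$. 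Passing to the subsequence fixed in the statement and using compactness for finite perimeter sets and for Radon measures, we obtain $E_{t_i}\to E_\infty$ in $L^1_\loc(\Omega)$, $\mu_{t_i}\to\mu_\infty$, and $D\chi_{E_{t_i}}\to D\chi_{E_\infty}=(\H^n\res\partial^*E_\infty)\res\nu_\infty$ as (vector) Radon measures. It then suffices to show that the Radon--Nikodym density $\theta$ of $\mu_\infty\res\partial^*E_\infty$ with respect to $\H^n\res\partial^*E_\infty$ satisfies $\theta=1$ for $\H^n$-a.e.\ point of $\partial^*E_\infty$.

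For the density upper bound (the analogue of Claim~2 in the proof of Lemma~\ref{mainconvergence}), fix $B_\r(x)\subset\subset\Omega$ and apply the one-sided minimizing property of Lemma~\ref{eminu} with $W=\ov B_\r(x)$ and $F=E_{t_i}\cup B_\r(x)$, for which $\partial^*F\cap W\subset\partial B_\r(x)$; this gives $\mu_{t_i}(B_\r(x))=|\partial^*E_{t_i}\cap B_\r(x)|\le\H^n(\partial B_\r(x))+\|P\|_{C^0}\,\H^{n+1}(B_\r(x))$. Letting $i\to\infty$ along radii $\r$ with $\mu_\infty(\partial B_\r(x))=0$ yields $\mu_\infty(B_\r(x))\le\H^n(\partial B_\r(x))+C\,\H^{n+1}(B_\r(x))$ for all such $\r$, hence a uniform bound on the upper $n$-dimensional density of $\mu_\infty$. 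By the standard density comparison this forces $\mu_\infty\ll\H^n$, so $\mu_\infty\res\partial^*E_\infty=\theta\,\H^n\res\partial^*E_\infty$ with $0\le\theta\in L^\infty(\partial^*E_\infty,\H^n)$.

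The lower bound $\theta\ge1$ (the analogue of Claim~3) follows from lower semicontinuity of total variation: for $\phi\in C_c(\Omega;\R^{n+1})$ with $|\phi|\le1$ one has $\int\phi\cdot dD\chi_{E_\infty}=\lim_i\int\phi\cdot dD\chi_{E_{t_i}}\le\lim_i\int|\phi|\,d\mu_{t_i}=\int|\phi|\,d\mu_\infty$, because $|D\chi_{E_{t_i}}|=\mu_{t_i}$; taking the supremum over such $\phi$ supported in a given open set gives $\H^n\res\partial^*E_\infty=|D\chi_{E_\infty}|\le\mu_\infty$ as measures, and restricting to $\partial^*E_\infty$ gives $\theta\ge1$ $\H^n$-a.e.\ there.

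The reverse inequality $\theta\le1$ $\H^n$-a.e.\ on $\partial^*E_\infty$ (the analogue of Claim~4) is where the real work lies, and I expect it to be the main obstacle. For $\H^n$-a.e.\ $x\in\partial^*E_\infty$, $\theta(x)$ is the $n$-dimensional density of $\mu_\infty$ at $x$ and $E_\infty$ has a measure-theoretic tangent half-space at $x$. Working in geodesic normal coordinates centred at $x$ (so that, up to controlled errors, $\exp_x^{-1}$ is an isometry, under which both Lemma~\ref{eminu} and the density bound above are preserved, cf.\ \cite{Alex}), one rescales at a scale $\r\downarrow0$ and, along a diagonal subsequence in $i$, the rescaled sets $E_{t_i}$ converge to a half-space while the rescaled measures $\mu_{t_i}$ converge to a Radon measure in $\R^{n+1}$. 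Since the bulk term $\int_{B_\r\cap E}P(\nu)\,d\H^{n+1}$ scales like $\r$ and disappears in the blow-up, the limit is the boundary measure of a set minimizing perimeter on the outside in $\R^{n+1}$ whose indicator is that of a half-space; combined with the lower bound of the previous paragraph applied to the blow-up, this limit is a multiplicity-one hyperplane, so $\theta(x)\le1$. This is precisely the argument of \cite[Claim~4 of the proof of Proposition~5.10]{S08}, the only changes being the reduction to $\R^{n+1}$ via the exponential map and the harmless $O(\r)$ contribution of the $P$-term, exactly as indicated in the proof of Lemma~\ref{mainconvergence}. Together with $\theta\ge1$ this gives $\theta\equiv1$ $\H^n$-a.e.\ on $\partial^*E_\infty$, i.e.\ \eqref{muinfty}.
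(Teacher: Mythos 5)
Your proposal is correct and takes essentially the same route as the paper, which proves the lemma by carrying over Claims 2--4 of Lemma~\ref{mainconvergence} with $\mu_{t_i}$, $E_{t_i}$ and the one-sided minimizing property of Lemma~\ref{eminu}, using lower semicontinuity for finite perimeter sets to get $\theta\ge1$ and the blow-up argument of \cite{S08} (with the exponential-map identifications and the harmless $P$-term of order $\rho$) to get $\theta\le1$. Your write-up simply spells out these steps in somewhat more detail than the paper, which defers them to Lemma~\ref{mainconvergence} and \cite{S08}.
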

\begin{proof}
The proof is exactly as in  Claims 2-4 of the proof of Lemma~\ref{mainconvergence}, using now the measures  $\H^n\res\partial^*E_\infty$ and $\mu_\infty\res\partial^*E_\infty$ (instead of $\mu_t$ and $\mu$, see \eqref{mumut}) and therefore we will not repeat it here. We point out that, to fit the notation of this lemma, one has to replace  $\Omega_0, \mu_t^k, \wt E_t^k$ and $\nu_k$ (of Lemma~\ref{mainconvergence}) by $\Omega, \mu_{t_i}, E_{t_i}$ and $\nu$ respectively and also replace both $\Sigma_t$ and $\partial^*\wt E_t$ by $ \partial^* E_\infty$. We also remark that in the proof here we need to use Lemma~\ref{eminu} instead of Lemma~\ref{emin} (or rather its corollary given in Remark~\ref{minimrmk}) and the lower semicontinuity of finite perimeter sets (lower semicontinuity of BV functions) instead of that for Radon measures for the convergence $E_{t_i}\to E_\infty$.
\end{proof}

We claim now that $\H^n\res \partial^*E_\infty$ has a generalized  mean curvature $\vec H_\infty=H_\infty\nu_\infty$ and it furthermore  satisfies $H_\infty+P(\nu_\infty)=0$, where recall that $\nu_\infty$ is the measure theoretic outer pointing unit normal to $E_\infty$.
This will then imply that $\partial^*E_\infty$ is a generalized MOTS in the sense of Definition~\ref{weakMOTS}.
To this aim we will argue as with the convergence in \eqref{varconv} replacing now $\wt \Sigma^{k_i}_t$ by $\Sigma_{t_i}$. 

By the structure theorem for  finite perimeter sets (see for example \cite[Section 5.7]{EG92}), we know that for $\H^{n}$-a.e. $x\in\partial^*E_\infty$ there exists $B_r(x)\subset M$ so that $E_\infty\cap B_r(x)$ is $C^1$, that is 
\begin{equation}\label{C1mfld}
\partial^* E_\infty\cap B_r(x)=\partial E_\infty\cap B_r(x)\text{   is a   }C^1 \text{   manifold}.
\end{equation} 
Furthermore, as mentioned before, we have  $\|D\chi_{E_\infty}\|=\H^{n}\res\partial^* E_\infty$.

Note now, that \eqref{H+Ptlimi}, implies that
\begin{equation}\label{Htlimibound}
\sup_{i\ge 0}\int_{\Omega}|H_{t_i}|^2d\mu_{t_i}\le C.
\end{equation}
\eqref{Htlimibound} shows that $\Sigma_{t_i}$
have  uniformly  bounded in $L^2$ first variation in $\Omega$, and since they also have  uniformly bounded area, we can apply the varifold compactness theorem of Allard \cite{Al}. Therefore,  after passing to a  subsequence,  $\Sigma_{t_i}\to \Sigma_\infty$ in $\Omega$ in the sense of varifolds, where $\Sigma_\infty$ is an integral $n$-dimensional varifold in $\Omega$ which carries a weak mean curvature $\vec H_\infty$ for which the bound \eqref{Htlimibound} still holds. Furthermore, $\vec H_\infty$ is perpendicular to $\Sigma_\infty$ $\H^{n}$-a.e. (see \cite[Chapter 5]{B}) and $\mu_t\res(H_{t_i} \nu)\to \mu_\infty\res\vec H_\infty$, where $\mu_\infty$ is the weight measure of $\Sigma_\infty$. We can now relate the varifold limit $\Sigma_\infty$ with $\partial^*E_\infty$ (the limit of finite perimeter sets or currents) by using \cite{W08}. In particular, by \cite[Theorem 1.2]{W08}, $\Sigma_\infty$ and $\partial^*E_\infty$ are compatible, that is $\Sigma_\infty=\var(\partial^* E_\infty)+ 2V$, where $V$ is some integral varifold in $\Omega$ and $\var(\partial^* E_\infty)$ is the varifold determined by $\partial^*E_\infty$ (see \cite[\S27]{LSgmt}). Using this, \eqref{C1mfld} (that is the structure theorem for sets of finite perimeter) and Lemma~\ref{mainconvII},  we conclude that for $\H^{n}$-a.e. $x\in\partial^*E_\infty$ there exists $B_r(x)\subset M$ so that $\partial^*E_\infty\cap B_r(x)=\partial E_\infty\cap B_r(x)$ is $C^1$ and furthermore $\Sigma_\infty=\partial E_\infty$ as varifolds in $B_r(x)$ (where in this last equality $\partial E_\infty$ is seen as a unit density varifold, the support of which  is a $C^1$ manifold).

For the generalized mean curvature of $\Sigma_\infty$ in $B_r(x)$ we then have that $\vec H_\infty=H_\infty\nu_\infty$ and $\mu_t\res(H_{t_i} \nu)\to \mu_\infty\res H_\infty\nu_\infty$. 
  Using this, the measure convergence (Lemma~\ref{mainconvII}) and the convergence $\mu_{t_i}\res \nu\to(\H^n\res\partial^* E_\infty)\res\nu_\infty$, we can argue as in \eqref{H+Plimit}, using again the Reshetnyak continuity and the lower semicontinuity, to conclude that
\[\int_{\partial^* E_\infty\cap B_r(x)}| H_\infty+ P(\nu_\infty)|d\H^n\le \liminf_i\int_{B_r(x)}|H_{t_i}+P(\nu)|d\mu_{t_i}.\]
Finally, using \eqref{to0} we obtain
\[\int_{\partial^* E_\infty\cap B_r(x)}| H_\infty+ P(\nu_\infty)|d\H^n=0,\]
which implies that $H_\infty(y)+P(\nu_\infty(y))=0$ for $\H^n$-a.e. $y\in \partial^* E_\infty\cap B_r(x)$.

Recalling the definition of a weak solution (Definition \ref{weak}) and that for the domain $\Omega_1$ (\eqref{uetou}, see also Lemma \ref{bdrystar}), we have therefore shown the following.

\begin{Theorem}\label{MOTSconv}
Let  $u\in C^{0,1}(\Omega_1)$ be a weak solution of $(**)$ (as  in Definition~\ref{weak}). Then $\partial^*(\Omega\setminus\Omega_1)$ is a generalized MOTS, as in Definition~\ref{weakMOTS}.
\end{Theorem}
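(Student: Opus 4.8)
The plan is to collect the pieces assembled in Section~\ref{properties} and carry out the passage $t\to\infty$. Recall that by Lemma~\ref{eminu} each superlevel set $E_t=\{u>t\}$ minimises area plus bulk energy $P$ from the outside, and that by Theorem~\ref{mainconvii} and \eqref{H+Ptlimi} one has, for a.e. $t$, that $\mu_t=\H^n\res\Sigma_t$ (with $\Sigma_t=\partial^*E_t$) carries a generalised mean curvature vector $\vec H_t=H_t\nu$, where $\nu=\nabla u/|\nabla u|$, and
\[
\int_0^\infty\!\!\int_{\Omega}|H_t+P(\nu)|\,d\mu_t\,dt\le|\Omega|.
\]
First I would use this bound to choose a sequence $t_i\uparrow\infty$ with $\int_\Omega|H_{t_i}+P(\nu)|\,d\mu_{t_i}\to 0$, i.e. \eqref{to0}.

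Next I would pass to the limit in $i$. The uniform area bound (Lemma~\ref{eminu}, Remark~\ref{area bound}) yields, along a subsequence, $\mu_{t_i}\to\mu_\infty$ as Radon measures, and compactness of finite perimeter sets gives $E_{t_i}\to E_\infty$ in $L^1_{\loc}(\Omega)$. The point requiring care is the identification of the limit: since $u\in C^{0,1}(\Omega_1)$ is finite on $\Omega_1$ and blows up on $\partial\Omega_1\setminus\partial\Omega$, the ``not yet swept'' regions $(\Omega\setminus\Omega_1)\cup\{u>t\}$ decrease to $\Omega\setminus\Omega_1$; combining this with Lemma~\ref{bdrystar} and the coarea formula (so that for a.e. $t$ the topological and reduced boundaries of $\{u>t\}$ agree up to $\H^n$-null sets) identifies $\partial^*E_\infty$ with $\partial^*(\Omega\setminus\Omega_1)$ up to an $\H^n$-null set. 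Lemma~\ref{mainconvII} then upgrades this to the measure identity $\mu_\infty\res\partial^*E_\infty=\H^n\res\partial^*E_\infty$, and since $D\chi_{E_{t_i}}=(\H^n\res\partial^*E_{t_i})\res\nu$ one also gets $\mu_{t_i}\res\nu\to(\H^n\res\partial^*E_\infty)\res\nu_\infty$ as vector valued Radon measures, where $\nu_\infty$ is the measure-theoretic outer normal to $E_\infty$.

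Then I would run the varifold argument exactly as in the passage preceding the statement. From \eqref{to0}, boundedness of $P$, and the area bound, $\sup_i\int_\Omega|H_{t_i}|\,d\mu_{t_i}<\infty$, so the $\Sigma_{t_i}$ have locally uniformly bounded first variation and area; Allard's compactness theorem \cite{Al} produces, along a further subsequence, an integral varifold limit $\Sigma_\infty$ carrying a generalised mean curvature $\vec H_\infty$, with $\mu_{t_i}\res(H_{t_i}\nu)\to\mu_\infty\res\vec H_\infty$. Using White's compatibility theorem \cite{W08}, $\Sigma_\infty=\var(\partial^*E_\infty)+2V$ for some integral varifold $V$, and together with the structure theorem for sets of finite perimeter this shows that for $\H^n$-a.e. $x\in\partial^*E_\infty$ there is a ball $B_r(x)$ on which $\partial^*E_\infty=\partial E_\infty$ is a $C^1$ manifold and $\Sigma_\infty=\partial E_\infty$ as unit density varifolds, so $\vec H_\infty=H_\infty\nu_\infty$ there. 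Finally, the Reshetnyak continuity theorem \cite{AFP}, applied with $\mu_{t_i}\res\nu\to\mu_\infty\res\nu_\infty$ and $\mu_{t_i}\to\mu_\infty$, gives $\mu_{t_i}\res P(\nu)\nu\to\mu_\infty\res P(\nu_\infty)\nu_\infty$, and lower semicontinuity of the total variation yields
\[
\int_{\partial^*E_\infty\cap B_r(x)}|H_\infty+P(\nu_\infty)|\,d\H^n\le\liminf_i\int_{B_r(x)}|H_{t_i}+P(\nu)|\,d\mu_{t_i}=0,
\]
so $H_\infty+P(\nu_\infty)=0$ $\H^n$-a.e. on $\partial^*E_\infty=\partial^*(\Omega\setminus\Omega_1)$, which is precisely Definition~\ref{weakMOTS}.

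The step I expect to be the real obstacle is reconciling the varifold limit $\Sigma_\infty$ with the finite perimeter limit $\partial^*E_\infty$: a priori $\Sigma_\infty$ could carry higher multiplicity or additional sheets of cancelling orientation, in which case its generalised mean curvature would not descend to $\partial^*E_\infty$. Excluding this is exactly the role of White's theorem \cite{W08} together with the unit density structure theorem, and it is also why genuine convergence of the unit normals (rather than merely weak convergence of the perimeter measures) is needed --- supplied through Lemma~\ref{mainconvII} and Reshetnyak continuity --- in order to transport the lower order term $P(\nu)$ to the limit. A secondary technical point, flagged above, is making the $t\to\infty$ identification of $E_\infty$ with $\Omega\setminus\Omega_1$ rigorous near the blow-up boundary $\partial\Omega_1\setminus\partial\Omega$.
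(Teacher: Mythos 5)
Your proposal is correct and follows essentially the same route as the paper: choosing $t_i\uparrow\infty$ via \eqref{H+Ptlimi}--\eqref{to0}, using the minimization/area bounds and Lemma~\ref{mainconvII} for the measure and finite perimeter limits, Allard compactness together with White's compatibility theorem and the structure theorem to localize where $\Sigma_\infty=\partial E_\infty$ as unit density varifolds, and Reshetnyak continuity plus lower semicontinuity to conclude $H_\infty+P(\nu_\infty)=0$ $\H^n$-a.e. Your explicit identification of the limit set with $\Omega\setminus\Omega_1$ (extending $u$ by $+\infty$ off $\Omega_1$) makes precise a step the paper only gestures at, but it is the same argument.
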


\subsection{Remarks on further directions} 

Having established the proof of the main theorem, Theorem~\ref{main}, in this subsection we discuss in more detail some further directions as briefly mentioned at the end of the introduction.

As seen in Section \ref{properties}, the level sets $\partial^*E_t$, where $E_t=\{u>t\}$, of a weak solution $u$ of $(*)$ converge as finite perimeter sets to a generalized MOTS $\partial^*E_\infty$. In proving this, we have also showed  that the level sets $\Sigma_t=\{u=t\}$ converge also in the sense of varifolds, with their limit being the integral varifold $\Sigma_\infty=\var(\partial^* E_\infty)+ 2V$, where $V$ is some integral varifold in $\Omega$ and $\var(\partial^* E_\infty)$ is the varifold determined by $\partial^*E_\infty$. Note that, even though we know that $\Sigma_\infty$ has a generalized mean curvature, we can only make sense of the quantity $H+P$ in the `$\var(\partial^* E_\infty)$' part, as a notion of an outward pointing unit normal is required. We believe that $V=0$ and $\partial^*E_\infty=\partial E_\infty$ is actually a MOTS in the classical sense and therefore, as it lies outside the outermost MOTS, it is indeed the outermost MOTS. Such a result would be concluded if we  had some control over the singular set of null mean curvature flow, similar to that in  \cite{W00} for mean curvature flow. We explain the relation between the size of the singular set and the convergence to a MOTS below.

In Lemma \ref{eminu} we have showed that the level sets $\partial^*E_t$ satisfy a one-sided minimizing property, namely that of minimizing area plus {\it bulk energy P}. This property is inherited  from the level sets $\{U_\e=t\}$  (Lemma \ref{emin}). The level sets  $\{U_\e=t\}$ not only minimize ``area $+\int P$'' on the outside, but they also minimize (not only one-sided)  ``area $+\int \left(P-\frac{1}{|\ov\nabla U_\e|}\right)$'' (Remark \ref{minimrmk}).  The latter minimizing property would pass to the limit  if we have that $|\ov\nabla U_\e|^{-1}d\H^{n+1}\to|\ov\nabla U|^{-1}d\H^{n+1}$ as radon measures. This is indeed true in the case of mean curvature flow in $\R^n$ as proved in \cite{MS} and we believe that it also true in our case.  Now, if this convergence is true, and thus the level sets $\{u=t\}$ minimize ``area $+\int \left(P-\frac{1}{|\nabla u|}\right)$'', we can use the $L^1$-finiteness of $|\nabla u|^{-1}$ (Lemma \ref{normalintest}) to conclude that as $t\to \infty$ the limit minimizes ``area$+\int P$'' and therefore is a MOTS, with the regularity of the limit coming from the fact that it is a $C$-minimizing current as defined in Section \ref{eexistencesection}.  
Therefore, the question is how can we show the above convergence. It is not hard to check that the arguments from  \cite{MS} apply in our case, provided that the regularity theory of White \cite{W00} for the mean curvature flow is also true in our case. In particular we would like to have the following: There exists a singular set $S\subset \graph u$ of parabolic Hausdorff  dimension at most $n-1$ outside of which the sets $\{u=t\}$ are a smooth level set flow.

Finally, we would like to remark that if the level $\{u=t\}$ minimize ``area $+\int \left(P-\frac{1}{|\nabla u|}\right)$'' then we can define a weak solution of $(*)$ using this minimization property, as was done in \cite{MS} for the mean curvature flow (see also \cite{HI01, KM13} for the inverse mean curvature flow and the inverse null mean curvature flow). In \cite{MS}, this definition was used to show that the level set flow is unique and it is not hard to check that the methods from \cite{MS} can be applied to our case to show uniqueness.

\section*{Acknowledgments}
 We are  indebted to Felix Schulze for detailed conversations on his papers \cite{S08, MS} (the second co-authored with Jan Metzger), which were crucial for Sections \ref{eexistencesection} and \ref{properties} of our work and also to Jan Metzger who pointed to us the barrier constructions in his paper \cite{AM09} (co-authored with Lars Andersson) that inspired our barrier construction in Section \ref{outermostMOTSsection}. We would also like to thank Klaus Ecker, Gerhard Huisken,  Mat Langford, Ulrich Menne, Oliver Schn\"urer, Alexander Volkmann and Brian White for very useful and inspiring discussions on mean curvature flow, measure theory and parabolic PDEs.
\vspace{0in}
\bibliography{biblio}
\bibliographystyle{plain}

\end{document}